\DeclareSymbolFont{tipa}{T3}{cmr}{m}{n}
\DeclareMathAccent{\invbreve}{\mathalpha}{tipa}{16}
\newcommand{\sw}{\mathfrak{w}}
\newcommand{\parm}{\xi}
\newcommand{\SW}{\mathfrak{W}}
\newcommand{\wrk}{{\rm work}}
\newcommand{\wk}{w}
\newcommand{\rK}{\breve{K}}
\newcommand{\Vwk}{{\boldsymbol{w}}}
\newtheorem{theorem}{Theorem}[section]
\newtheorem{lemma}[theorem]{Lemma}
\newtheorem{corollary}[theorem]{Corollary}
\newtheorem{proposition}[theorem]{Proposition}
\theoremstyle{definition} \newtheorem{problem}[theorem]{Problem}
\newtheorem{definition}[theorem]{Definition}
 \theoremstyle{remark}
\newtheorem{remark}[theorem]{Remark}
\newtheorem{assumption}[theorem]{Assumption}
\newcommand{\bx}{\boldsymbol{x}}
\newcommand{\bzeta}{\boldsymbol{\zeta}}
\theoremstyle{definition}
\newcommand{\normz}[2][]{\lVert #2\rVert_{#1}}
 \newcommand{\bbP}{\mathbb{P}}
\newcommand{\curl}{\operatorname{\mathbf{curl}}}
\renewcommand{\div}{\operatorname{div}}
\newcommand{\p}[1]{\langle #1\rangle}  
\newcommand{\modulo}[1]{\left\vert #1\right\vert}
\newcommand{\hcurl}[1]{\bm{H}( \curl; #1)}
\newcommand{\hocurl}[1]{\bm{H}_0(\curl; #1 )}
\newcommand{\hdiv}[1]{\bm{H}(\div;#1)}
\newcommand{\Hsob}[2]{\bm{H}^{#1}( #2 )}
\newcommand{\Wsob}[2]{\bm{W}^{#1}( #2 )}
\newcommand{\Wsobpw}[2]{\bm{W}^{#1}_{\rm pw}( #2 )}
\newcommand{\hsob}[2]{{H}^{#1}( #2 )}
\newcommand{\wsob}[2]{{W}^{#1}( #2 )}
\newcommand{\Lp}[2]{\bm{L}^{#1}( #2 )}
\newcommand{\lp}[2]{{L}^{#1}(
  #2 )} 
\newcommand{\half}{\frac{1}{2}}
\newcommand{\KL}{\mbox{Karh\'{u}nen-Lo\`{e}ve }}
\newcommand{\be}{\begin{equation}} \newcommand{\ee}{\end{equation}}
\newcommand{\eps}{{\varepsilon}} %
 \newcommand{\set}[2]{\{#1\,:\,#2\}}
\newcommand{\C}{\mathbb{C}}
\newcommand{\cC}{\mathcal{C}}
\newcommand{\cF}{\mathcal F}
\newcommand{\cI}{\mathcal I}
\newcommand{\CF}{\mathcal F}
 \newcommand{\cL}{\mathcal L}
\newcommand{\cN}{\mathcal N}
\newcommand{\frakT} {{\mathfrak
    T}} %
\newcommand{\frakE} {{\mathfrak E}}
\newcommand{\bnul}{{\boldsymbol 0}} \newcommand{\bsnu}{{\boldsymbol
    \nu}} \newcommand{\bsmu}{{\boldsymbol \mu}}
\newcommand{\bsy}{{\boldsymbol y}} 
\newcommand{\bse}{{\boldsymbol e}}
\newcommand{\bT} {\mathbf{T}}
\newcommand{\bmT} {\bm{T}}
 \newcommand{\D}{\mathrm{D}}
\newcommand{\N}{\mathbb{N}} 
\newcommand{\dd}{\,{\rm d}}
\newcommand{\dup}[2]{\langle #1, #2\rangle}
\renewcommand{\bar}[1]{\overline#1} \renewcommand{\div}{{\rm div}}
\newcommand{\R}{{\mathbb{R}}} 
\newcommand{\IC}{{\mathbb C}} 
 \newcommand{\IN}{{\mathbb N}}
 \newcommand{\IR}{{\mathbb R}}
\newcommand{\norm}[2][]{\| #2 \|_{#1}}
\newcommand{\seminorm}[2][]{| #2 |_{#1}}
\newcommand{\normc}[2][]{\left\| #2 \right\|_{#1}}
\newcommand{\Dnul}{{\widehat{\D}}} 
\newcommand{\Dcomp}{{\widetilde{\D}}}
\newcommand{\Id}{\operatorname{\mathsf{I}}}
 \renewcommand{\d}{\operatorname{d}}
\newcommand{\ii}{\imath}
\newcommand{\ao}[2]{a(#1,#2)}
\newcommand{\fo}[1]{F(#1)}
\newcommand{\aot}[2]{a_{\bT}(#1,#2)}
\newcommand{\ant}[2]{\hat{a}_{\bT}(#1,#2)}
\newcommand{\act}[2]{\tilde{a}_{\bT}(#1,#2)}
\newcommand{\fot}[1]{F_{\bT}(#1)}
\newcommand{\fnt}[1]{\hat{F}_{\bT}(#1)}
\newcommand{\fct}[1]{\tilde{F}_{\bT}(#1)}
\newcommand{\aht}[2]{\tilde a_{h;\bT}(#1,#2)}
\newcommand{\fht}[1]{\tilde F_{h;\bT}(#1)}
\newcommand{\hcurlbf}[2][]{{{\boldsymbol{H}}_{#1}}(\curl; #2)}
\newcommand{\hdivbf}[2][]{{{\boldsymbol{H}}_{#1}}(\div; #2)}
\newcommand{\hscurlbf}[2]{{{\boldsymbol{H}}^{#1}}(\curl; #2)}
\newcommand{\hsdivbf}[2]{{{\boldsymbol{H}}^{#1}}(\div; #2)}
\newcommand{\hscurlbfpw}[2]{{{\boldsymbol{H}}_{\rm pw}^{#1}}(\curl; #2)}
\newcommand{\hcurlcurlbf}[2][]{{{\boldsymbol{H}}_{#1}}(\curl\curl; #2)}
\newcommand{\hncurlbf}[2][]{{{\boldsymbol{H}}_0^{#1}}(\curl; #2)}
\renewcommand{\div}{\operatorname{div}}
\renewcommand{\Re}{\operatorname{Re}}
\newcommand{\cP}{\mathcal{P}}
\newcommand{\scurl}{\operatorname{curl}}
\newcommand{\bA}{\mathbf{A}} 
 \newcommand{\bE}{\mathbf{E}}
 \newcommand{\bH}{\mathbf{H}}
 \newcommand{\bn}{\mathbf{n}}
\newcommand{\by}{\mathbf{y}} \newcommand{\bJ}{\mathbf{J}} 
 \newcommand{\bV}{\mathbf{V}}
\newcommand{\bU}{\mathbf{U}}
\newcommand{\tD}{\gamma_{\mathrm{D}}}
\DeclareMathOperator*{\essinf}{ess\,inf}
\numberwithin{equation}{section}
\title[Multilevel Shape UQ in CEM]{%
Multilevel Domain Uncertainty Quantification 
\\
in Computational Electromagnetics}
\author[R.~Aylwin]{Rub\'en Aylwin} \address{Faculty of Engineering and Sciences, Universidad Adolfo Ib\'{a}a\~nez, 7941169 Santiago, Chile}
\email{rbayl1993@gmail.com}
\author[C.~Jerez-Hanckes]{Carlos Jerez-Hanckes} \address{Faculty of Engineering and Sciences, Universidad Adolfo Ib\'{a}\~nez, 7941169 Santiago, Chile} \email{carlos.jerez@uai.cl}
\author[Ch.~Schwab]{Christoph Schwab} 
  \address{Seminar for Applied Mathematics, ETH Z\"urich, 8092 Z\"urich, Switzerland}
\email{schwab@math.ethz.ch}
\author[J.~Zech]{Jakob Zech} \address{
  Interdisciplinary Center for Scientific Computing (IWR), Universit\"at Heidelberg, 69120 Heidelberg, Germany}
\email{jakob.zech@uni-heidelberg.de}
\thanks{This work was supported in part by Fondecyt Regular 1171491,
  Conicyt-PFCHA/Doctorado Nacional/2017-21171791 and by the Swiss
  National Science Foundation under grant SNF149819. %
}
\keywords{Computational Electromagnetics, Uncertainty Quantification,
  Finite Elements, Shape Holomorphy, Smolyak Quadrature}
\date{\bf \today}
\subjclass[2010]{35A20, 35B30, 32D05, 35Q61}
\pgfplotsset{yminorticks=false, legend style={/tikz/every even
    column/.append style={column sep=0.5cm}}, legend style={at={(axis
      description cs:0.5,-0.26)},anchor=north}, title style={at={(axis
      description cs:0.5,0.98)},anchor=south}, legend
  style={font=\fontsize{10}{8}\selectfont}, tick label
  style={font=\fontsize{10}{8}\selectfont}, x label style={at={(axis
      description cs:0.5,-0.02)},anchor=north}, y label
  style={at={(axis description cs:0,0.5)},anchor=south}}
\begin{document}
\normalem

\begin{abstract}
  We continue our study [Domain Uncertainty Quantification in
    Computational Electromagnetics, JUQ (2020), {\bf 8}:301--341] of
  the numerical approximation of time-harmonic electromagnetic fields
  for the Maxwell lossy cavity problem for uncertain geometries. We
  adopt the same affine-parametric shape parametrization framework,
  mapping the physical domains to a nominal polygonal domain with
  piecewise smooth maps.  The regularity of the pullback solutions on
  the nominal domain is characterized in piecewise Sobolev spaces.  We
  prove error convergence rates and optimize the algorithmic
  steering of parameters for edge-element discretizations in the
  nominal domain combined with: (a) multilevel Monte Carlo sampling,
  and (b) multilevel, sparse-grid quadrature for computing the
  expectation of the solutions with respect to uncertain domain
  ensembles.  In addition, we analyze sparse-grid interpolation to
  compute surrogates of the domain-to-solution mappings. All
  calculations are performed on the polyhedral nominal domain, which
  enables the use of standard simplicial finite element
  meshes. We provide a rigorous fully discrete error analysis and
  show, in all cases, that dimension-independent algebraic convergence
  is achieved. For the multilevel sparse-grid quadrature methods, we
  prove higher order convergence rates which are free from the
  so-called curse of dimensionality, i.e.~independent of the number of
  parameters used to parametrize the admissible shapes.  Numerical
  experiments confirm our theoretical results and verify the
  superiority of the sparse-grid methods.
\end{abstract}
\maketitle

\section{Introduction}
\label{sec:intro}
In recent years, \emph{computational uncertainty quantification}
(computational UQ for short)
has emerged as a sub-discipline of computational science and engineering.
A broad theme within computational UQ is the efficient numerical analysis
of parametric partial differential equation models in science and engineering.
They are usually based on parametric families of domains which are homeomorphic,
in particular, to one fixed {\em nominal} reference domain via parametric families
of diffeomorphisms. In the common case that the parametric dependence 
involves possibly infinitely many parameters, the solution families 
become likewise infinite-parametric. The numerical approximation of such
parametric solution sets is costly due to the usually large number of 
numerical approximations of parametric solutions that must be generated
to approximate the entire parametric solution family. In the presently
considered time-harmonic Maxwell equation model, the physical domain 
is necessarily three-dimensional, which is an additional source of 
computational complexity.

The present paper addresses the formulation and error analysis of
multilevel Monte-Carlo (MLMC) Finite Element (FE) schemes for
efficient computational domain uncertainty quantification of
time-harmonic, electromagnetic scattering from parametric families of
lossy cavities in a bounded domain in three-dimensional space.
\subsection{Previous work}
\label{sec:Prevwork}

The question of \emph{shape recovery in time-harmonic acoustic and
  electromagnetic scattering} subject to noisy measurements of
far-fields has received increasing attention in recent
years. Classical shape calculus suggests that under certain
conditions, the dependence of the \emph{forward-map} from scatterer
shape to far-field is continuous and differentiable in suitable
topologies (cf.~ \cite{DelfZol_2ndEd_2011} and references therein).
This continuous dependence implies strong measurability of solution
families corresponding to random ensembles of admissible domains,
thereby justifying the use of Monte-Carlo sampling to explore the
solution manifold.

In the discipline of computational uncertainty quantification, one is
interested in \emph{numerically approximating parametric solution
  families} corresponding to parametric representations of admissible
shapes, e.g., by Fourier-, wavelet- or \KL-expansions. Naturally,
many-parametric representations of shapes will imply many-parametric
solution families, thereby mandating \emph{sampling and interpolation
  of parametric solutions in high-dimensional parameter domains}.  A
broad class of forward data-to-solution maps which arise in shape
uncertainty quantification have been shown to be holomorphic as maps
between---possibly complexified---Banach spaces.  We refer to
\cite{HSSS2018,Jerez-Hanckes:2017aa,Schwab:2016aa} and the references
there for proofs of this so-called \emph{Shape Holomorphy} of PDEs.
This property has been used in \cite{AJSZ20} for the numerical
analysis of UQ in the time-harmonic propagation of electromagnetic
waves in lossy cavities in ${\mathbb R}^3$.  In this last reference,
we developed a single-level algorithm whose convergence properties
were analyzed; in particular, dimension-independent convergence rates
of suitable sparse-grid interpolation and approximation algorithms for
the many-parametric dependence were shown. Moreover, we showed that
these algorithms outperform the corresponding Monte-Carlo sampling
considerably, also for many-parametric shape representations.

The present paper develops the corresponding multilevel algorithms and
their error analysis. As is well-known from previous work, e.g.,
\cite{ABChS_2011,KCMG_2011}, 
for scalar, parametric PDEs, the analysis
of the multilevel FE algorithms does require additional regularity of
the parametric solution families. Specifically, we require holomorphy
of the parametric, time-harmonic solutions for a Maxwell-like,
non-homogeneous problem on a so-called {\em
  nominal}---reference---domain, with non-homogeneous coefficients
resulting from the pullback of the parametric, physical domain to this
nominal domain.  
The requited regularity results for solutions of 
Maxwell-like equations in the nominal domain, for \emph{non-constant
  coefficients of low differentiability}, were recently obtained in
\cite{AlbertiRoughMaxw}. 
The shape holomorphy results required for the
error analysis of the multilevel Smolyak quadrature algorithms 
is developed in the present paper.

\subsection{Paper Layout}
\label{sec:layout}
The structure of this text is as follows.  In
Section~\ref{sec:mslsscav}, we present the governing equations and the
problem formulation of Maxwell's equations on a lossy cavity in three
space dimensions. Specifically, we focus on the parametric model of
the cavity's uncertain shape, and of the variational form of the
governing equations.  We set in particular notation, and recapitulate
basic or known results on the unique solvability of the forward model;
particular attention is paid to {\em a priori} bounds which are
uniform over all realizations of domains. The presented approach opts
for the so-called {\em domain-mapping formulation}, whereby all
realizations of domains which occur in numerical simulation are
mathematically formulated on one fixed, so-called {\em nominal
  domain}. As we show here, these entails the need to numerically
solve a potentially large number of Maxwell-like PDEs with variable,
metric-dependent coefficients in the nominal domain.

Section~\ref{sec:DiscSol} addresses the discretization of the
parametric forward problem, in particular edge-elements in the
nominal domain (cf.~Section~\ref{ssec:fe}). Section~\ref{sec:ParamSol}
describes the mathematical setting of regularity of the parametric
solution families. In comparison to the error analysis of the
single-level Smolyak quadrature algorithms in \cite{AJSZ20}, stronger
parametric regularity results are required. Section~\ref{ssec:mlmc}
recapitulates the MLMC algorithm, and Section~\ref{sec:MLSmol} the
corresponding multilevel Smolyak quadrature one. Finally,
Section~\ref{sec:experiments} contains several numerical experiments
to illustrate our theoretical results, confirming %
in particular the
superior accuracy versus cost performance of the multilevel
  version compared to the single-level algorithms from \cite{AJSZ20}.
\subsection{General notation}
\label{sec:Notation}
 Let $m\in\IN_0:=\{0\}\cup\IN$ and let $\D\subset\IR^d$ be an open and bounded Lipschitz domain,
then $\cC^m(\D)$ and $\cC^m(\D;\IC)$ denote the set of $m$-times
continuously differentiable functions with real and complex values in
$\overline{\D}$, respectively. Furthermore, $\cC^\infty(\D)$ denotes the
space of infinitely differentiable functions in $\overline{\D}$ and we
write $\cC^m_0(\D)$ for the set of elements of $\cC^m(\D)$ with compact
support on $\D$---analogous definitions apply for $\cC^\infty(\D;\IC)$
and $\cC^m_0(\D;\IC)$. 

For $k\in\N_0$, we write $\bbP_k(\D;\IC^d)$
  the space of functions from $\D$ to $\IC^d$, such that each of their
  $d$ components is a polynomial with complex coefficients of total
  degree $k$ or smaller. $\widetilde\bbP_k(\D;\IC^d)$
  represents the space of functions in $\bbP_k(\D;\IC^d)$ which have
  polynomials of degree exactly $k$ on each component.

The set of all bounded antilinear mappings from a Banach space
$X$ to $\IC$ is written as $X^*$ and is referred to as the dual
space of $X$. The norm and duality product of a Banach space shall be
denoted by the use of subscript ($\norm[X]{\cdot}$ and
$\p{\cdot,\cdot}_{X\times X^*}$, respectively).
For a pair of Banach spaces $X$ and $Y$, the set
of all linear mappings from $X$ to $Y$ is denoted $\cL(X;Y)$. For $s\geq 0$ and $p\geq 1$, $\lp{p}{\D}$ denotes the
Banach space of $p$-integrable functions over $\D$, while
$\wsob{s,p}{\D}$ denotes the standard Sobolev spaces
of order $s$ as defined in \cite[Chap.~3]{McLean2000}, where
we use the convention $\wsob{0,p}{\D}=\lp{p}{\D}$. If
$p=2$, we shall use the standard notation
$\hsob{s}{\D}=\wsob{s,2}{\D}$. Furthermore, the
norm and semi-norm of $\hsob{s}{\D}$ are denoted as
$\norm[s,\D]{\cdot}$ and $\seminorm[s,\D]{\cdot}$, respectively.

Let $p\geq 1$ and let $(\Omega,\mathfrak{F},\upnu)$ be a
probability space, where we take $\Omega$ to be a sample
space, $\mathfrak{F}$ a $\sigma$-algebra on $\Omega$ and $\upnu$ a probability measure.
Given a separable Hilbert space $X$, we say that a function $f:\Omega\to X$ is
measurable (or a random variable) if for every Borel set $B\subset X$ there holds that
$\set{\zeta\in\Omega}{f(\zeta)\in B}\in\mathfrak{F}$, we say that $f$ is
strongly measurable if it is the pointwise limit of a sequence of simple functions $\{f_n\}_{n\in\IN}$,
and we say that $f$ is Bochner integrable if it is strongly measurable and $\lim_{n\to\infty}\int_\Omega\norm[X]{f(\zeta)-f_n(\zeta)}\d\!\upnu(\zeta)=0$. Moreover, if $f:\Omega\to X$ is Bochner integrable, its integral over $\Omega$ is
defined as $\int_\Omega f(\zeta)\d\!\upnu(\zeta):=\lim_{n\to\infty}\int_\Omega f_n(\zeta)\d\!\upnu(\zeta)$. We
denote the Bochner space
of $p$ integrable, measurable mappings in
$(\Omega,\mathfrak{F},\upnu)$ with values in $X$ as
$\lp{p}{\Omega,\mathfrak{F},\upnu;X}$.
When the $\sigma$-algebra on $\Omega$ and the
probability measure are clear from the context, we will denote the norm of
$\phi\in\lp{p}{\Omega,\mathfrak{F},\upnu;X}$ as
$\norm[\lp{p}{\Omega;X}]{\phi}$. For further details we refer to
\cite{prato_zabczyk_2014,Diestel_1977,hytonen2016analysis}.

In general, boldface symbols will be used to differentiate
the vector-valued counterparts of scalar functions and
functional spaces, e.g., $\Lp{2}{\D}$ denotes the functional
space of vector-valued functions with each of its
$d$-components in $\lp{2}{\D}$. In particular, the
$\lp{2}{\D}$-inner product is denoted $(\cdot,\cdot)_{\D}$. 
Functional spaces built of tensor quantities are to be
identified by indicating the range of the elements it contains,
eg., $\Lp{2}{\D;\IC^{d\times d}}$  represents the space of
tensor-valued functions with each of their $d^2$ entries
belonging to $\lp{2}{\D}$.

Euclidean norms in $\IR^d$ are denoted by
$\norm[\IR^d]{\cdot}$, while the induced matrix norm
is denoted by $\norm[\IR^{d\times d}]{\cdot}$
with analogous versions when in $\IC^d$ and $\IC^{d\times d}$. 
The Jacobian of a differentiable function $\bU:\D\to\IC^d$ is written as
$\d\!\bU:\D\to\IC^{d\times d}$. For a general square matrix
$\bA\in\IC^{d\times d}$, we write the transpose matrix of
$\bA$ as $\bA^\top$, its determinant as $\det(\bA)$ and its
inverse as $\bA^{-1}$, when it exists. Finally, the overline notation
will be used to represent complex conjugation as well as
the closure of a set.
\section{Maxwell's equations on a lossy cavity}
\label{sec:mslsscav}
We begin this section %
by stating the time-harmonic Maxwell's lossy cavity problem and its functional
framework.
\subsection{Functional spaces for Maxwell's equations}
\label{sec:FncSpc}
Let $\D$ be an open and bounded Lipschitz domain in $\IR^3$
with simply connected boundary $\partial\D$, exterior
$\D^c:=\IR^3\setminus\overline\D$ and with exterior unit normal
vector $\bn$---pointing from $\D$ to $\D^c$.
We recall the standard functional spaces required to formulate
Maxwell problems:
\begin{align*}
\hcurlbf{\D}&:=\{\bU\in\Lp{2}{\D}\; : \; \curl \bU\in\Lp{2}{\D}\},\\
\hcurlcurlbf{\D}&:=\{\bU\in\hcurlbf{\D}\; :\; \curl\bU\in\hcurlbf{\D}\},\\
\hdivbf{\D}&:=\{\bU\in\Lp{2}{\D}\; :\; \div\bU\in\lp{2}{\D}\},
\end{align*}
and introduce, for $m\in\IN_0$ and $p\geq1$, extensions of
$\hcurlbf{\D}$ and $\hdivbf{\D}$ to spaces with additional regularity
and arbitrary integrability, 
\begin{align}
\Wsob{m,p}{\curl;\D}:=\{\bU\in\Wsob{m,p}{\D}\; :\; \curl\bU\in\Wsob{m,p}{\D}\},\quad
\hscurlbf{m}{\D}:=\Wsob{m,2}{\curl;\D},\\
\Wsob{m,p}{\div;\D}:=\{\bU\in\Wsob{m,p}{\D}\; :\; \div\bU\in\wsob{m,p}{\D}\},\quad
\hsdivbf{m}{\D}:=\Wsob{m,2}{\div;\D},
\end{align}
with associated norms
\begin{gather}
\norm[\Wsob{m,p}{\curl;\D}]{\bU}:=(\norm[\Wsob{m,p}{\D}]{\bU}^p+\norm[\Wsob{m,p}{\D}]{\curl\bU}^p)^\frac{1}{p},\\
\norm[\Wsob{m,p}{\div;\D}]{\bU}:=(\norm[\Wsob{m,p}{\D}]{\bU}^p+\norm[\wsob{m,p}{\D}]{\div\bU}^p)^\frac{1}{p},
\end{gather}
for $p\in[1,\infty)$ and the usual modification for $p=\infty$.
We point out that $\Wsob{0,2}{\curl;\D}\equiv\hcurlbf{\D}$
and $\Wsob{0,2}{\div;\D}\equiv\hdivbf{\D}$.
\begin{definition}
\label{def:traces}
For $\bU\in\bm{\cC}^{\infty}(\D)$ we define the following
trace operators:
\begin{align*}
\gamma_D\bU:=\bn\times(\bU\times\bn)\vert_{\partial\D},\quad
\gamma_{D}^{\times}\bU:=(\bn\times\bU)\vert_{\partial\D}\quad\mbox{and}\quad
\gamma_N\bU:=(\bn\times\curl\bU)\vert_{\partial\D},
\end{align*}
as the Dirichlet trace, flipped Dirichlet trace and Neumann
trace operators, respectively.
\end{definition}

The trace operators in Definition \ref{def:traces} may be
extended to continuous linear functionals from $\hcurlbf{\D}$
and $\hcurlcurlbf{\D}$ to subsets of
$\Hsob{-\frac{1}{2}}{\partial\D}:={\Hsob{\frac{1}{2}}{\partial\D}}^*$.
Specifically, we consider the following trace spaces
({cf.}~\cite{BuCo00,BufHipTvPCS_NM2003}):
\begin{align}
\boldsymbol{H}^{-\half}_{\div}(\partial\D) &:= \{\bU\in
(\bn\times(\Hsob{\half}{\partial\D}\times\bn))^{*}:\div_{\partial\D}\bU \in
\hsob{-\half}{\partial\D} \},\\
\boldsymbol{H}^{-\half}_{\scurl}(\partial\D) &:= \{\bU\in
(\Hsob{\half}{\partial\D}\times\bn)^*:\scurl_{\partial\D}\bU
\in \hsob{-\half}{\partial\D} \},
\end{align}
where $\div_{\partial\D}$ and $\scurl_{\partial\D}$ are,
respectively, the surface  divergence and surface scalar
curl operators and
$\boldsymbol{H}^{-\half}_{\scurl}(\partial\D)={\boldsymbol{H}^{-\half}_{\div}(\partial\D)}^*$
(cf.~\cite[Thm.~2]{BuHip} and \cite[Rmk.~3.32]{Monk:2003aa}).
Then, the operators in Definition \ref{def:traces} may be
continuously extended as
\begin{alignat*}{2}
\gamma_D: &\hcurlbf{\D} &&\to\boldsymbol{H}^{-\frac{1}{2}}_{\scurl}(\partial\D),\\
\gamma_D^{\times}: &\hcurlbf{\D} &&\to\boldsymbol{H}^{-\half}_{\div}(\partial\D), \\
\gamma_N: &\hcurlcurlbf{\D} &&\to\boldsymbol{H}^{-\half}_{\div}(\partial\D).
\end{alignat*} 
We also introduce the space of functions with
well defined curl and null flipped Dirichlet trace:
\begin{align*}
\hncurlbf{\D}:=\{\bU\in\hcurlbf{\D}\; :\; \gamma_{\D}^{\times}\bU=\bnul\;\mbox{on}\;\partial\D\},
\end{align*} 
which is a closed subspace of $\hcurlbf{\D}$.
Finally, for $\bU$ and $\bV\in \hcurlbf{\D}$
there holds the following integration by parts formula
\cite[Eq.~(27)]{BuCo00}:
\begin{align}\label{eq:green}
(\bU,\curl\bV)_\D-(\curl\bU,\bV)_\D =
-\dup{\tD^\times\bU}{\tD\bV}_{\partial\D}\;
\end{align}
where $\dup{\cdot}{\cdot}_{\partial\D}$ denotes the
duality between $\boldsymbol{H}_{\mathrm{div}}^{-\half}(\partial \D)$
and $\boldsymbol{H}^{-\half}_{\scurl}(\partial\D)$.
\subsection{Lossy cavity problem}
\label{sec:EM}
We consider the EM cavity problem for a time-harmonic dependence
$e^{\ii\omega t}$ with circular frequency $\omega>0$ and $\ii^2=-1$
on $\D$. The electric permittivity is denoted
$\varepsilon(\bx)\in \Lp{\infty}{\D;\C^{3\times 3}}$ and the magnetic
permeability is denoted as $\mu(\bx)\in \Lp{\infty}{\D;\C^{3\times 3}}$, where
losses are represented by their respective imaginary parts. With the current density
$\bJ\in \Lp{2}{\D}$, Maxwell's equations in $\D$ read
\begin{align}\label{eq:firstorder}
\begin{aligned}
\curl \bE + \ii\omega \mu \bH &= \bnul &&\text{in }\D,\\
\ii\omega\eps\bE -\curl \bH &=-\bJ&&\text{in }\D.
\end{aligned}
\end{align}
Assuming the pointwise inverse
$\mu^{-1}\in \Lp{\infty}{\D;\C^{3\times 3}}$ to be
well defined,  the system in \eqref{eq:firstorder} can
be reduced to
\begin{equation}\label{eq:MaxE}
\curl \mu^{-1}\curl \bE - \omega^2\eps \bE =
-\ii\omega\bJ\qquad\text{in }\D.
\end{equation}
We further impose perfect electric conductor (PEC)
boundary conditions on $\partial\D$,
\begin{equation}\label{eq:bc}
\tD^\times\bE=\bnul\quad\text{on }\partial\D.
\end{equation}

\begin{remark}\label{rmk:firstorder}
Under the assumption that the pointwise inverse
$\mu^{-1}:\D\to\C^{3\times 3}$ belongs to
$\Lp{\infty}{\D;\C^{3\times 3}}$, any weak solution
$\bE$, $\bH\in \hcurlbf{\D}$ of \eqref{eq:firstorder}
gives a weak solution of \eqref{eq:MaxE} and viceversa by setting
\begin{align}\label{eq:bH}
\bH:=\frac{\ii}{\omega}{\mu}^{-1}\curl\bE.
\end{align}
\end{remark}
\subsection{Existence and uniqueness of solutions}
\label{sec:ex_uniq}
By multiplying \eqref{eq:MaxE} by a test function
$\bV\in\hncurlbf{\D}$ and integrating by parts, using
\eqref{eq:green}, one derives the usual weak formulation
of the Maxwell lossy cavity problem \eqref{eq:MaxE}--\eqref{eq:bc}.

\begin{problem}[Maxwell cavity problem]
\label{prob:weak}
We seek $\bE\in\hncurlbf{\D}$ such that, with
\begin{align}\label{eq:aF}
\begin{aligned}
\ao{\bU}{\bV}&:= \int_\D \mu^{-1}\curl\bU\cdot
\curl\overline{\bV}\dd \bx
-\int_\D\omega^2\eps\bU\cdot\overline{\bV}\dd \bx,\\
\fo{\bV}&:=-\ii\omega\int_{\D}\bJ\cdot
\overline{\bV}\dd \bx,
\end{aligned}
\end{align}
for all $\bU$, $\bV\in\hncurlbf{\D}$, it holds that
\begin{align}\label{eq:weak}
\ao{\bE}{\bV}=\fo{\bV}\qquad\forall\;
\bV\in\hncurlbf{\D}.
\end{align}
\end{problem}

We will work under a positivity assumption for the parameters
defining $\ao{\cdot}{\cdot}$ ({cf.}~\cite{AJSZ20,Ern:2018aa}).
\begin{proposition}
\label{prop:existence}
Assume that $\bJ\in\Lp{2}{\D}$, that $\eps,\ \mu^{-1}\in\Lp{\infty}{\D;\IC^{3\times 3}}$ and that
there exist $\theta\in\R$ and $\alpha>0$ such that
\begin{align}\label{eq:alpha}
\inf_{0\neq \bzeta\in\C^3}
\essinf_{\bx\in\D}\;
\min\left\{
\frac{\Re(\bzeta^\top e^{\ii\theta}\mu(\bx)^{-1}\bar\bzeta)}{\norm[\C^3]{\bzeta}^2},
\frac{-\Re(\bzeta^\top e^{\ii\theta}\omega^2\eps(\bx)\bar\bzeta)}{\norm[\C^3]{\bzeta}^2}
\right\} \ge\alpha, 
\end{align}
holds. Then, Problem
\ref{prob:weak} has a unique solution
$\bE\in\hncurlbf{\D}$ and
\begin{align}
\label{eq:apriori}
\norm[\hcurlbf{\D}]{\bE}\le \frac{1}{\alpha}\norm[\hncurlbf{\D}^*]{F},
\end{align}  
for $F$ in \eqref{eq:aF} and $\alpha>0$ as in
\eqref{eq:alpha}.
\end{proposition}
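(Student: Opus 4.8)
The plan is to cast Problem~\ref{prob:weak} as an application of the Lax--Milgram lemma for bounded, coercive sesquilinear forms on the Hilbert space $\hncurlbf{\D}$. The one subtlety is that $\ao{\cdot}{\cdot}$ in \eqref{eq:aF} is \emph{not} coercive as written: the $-\omega^2\eps$ contribution renders it sign-indefinite. I would therefore exploit the unimodular rotation $e^{\ii\theta}$ furnished by \eqref{eq:alpha}. Since $e^{\ii\theta}\neq 0$, the identity \eqref{eq:weak} is equivalent to the rotated variational problem with sesquilinear form $\tilde{a}(\bU,\bV):=e^{\ii\theta}\ao{\bU}{\bV}$ and antilinear functional $\tilde{F}(\bV):=e^{\ii\theta}\fo{\bV}$; I work with this rotated pair throughout, as it has the same unique solvability and the same solution $\bE$.

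Boundedness of the two forms is routine. From $\eps,\mu^{-1}\in\Lp{\infty}{\D;\IC^{3\times 3}}$ and the Cauchy--Schwarz inequality one obtains $|\tilde{a}(\bU,\bV)|\le C\,\norm[\hcurlbf{\D}]{\bU}\,\norm[\hcurlbf{\D}]{\bV}$ with $C$ depending only on $\omega$ and the $\Lp{\infty}{\D}$-norms of the coefficients, while $\bJ\in\Lp{2}{\D}$ gives $|\tilde{F}(\bV)|\le\omega\,\norm[\Lp{2}{\D}]{\bJ}\,\norm[\hcurlbf{\D}]{\bV}$, so that $\tilde{F}\in\hncurlbf{\D}^*$ with $\norm[\hncurlbf{\D}^*]{\tilde{F}}=\norm[\hncurlbf{\D}^*]{F}$.

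The crux is coercivity, which is exactly what \eqref{eq:alpha} is engineered to deliver. Setting $\bV=\bU$ and taking real parts,
\begin{align*}
\Re\tilde{a}(\bU,\bU)=\int_\D\Re\!\left(e^{\ii\theta}\mu^{-1}\curl\bU\cdot\curl\overline{\bU}\right)\dd\bx-\int_\D\Re\!\left(e^{\ii\theta}\omega^2\eps\bU\cdot\overline{\bU}\right)\dd\bx.
\end{align*}
Applying the pointwise lower bound \eqref{eq:alpha} with $\bzeta=\curl\bU(\bx)$ in the first integrand and with $\bzeta=\bU(\bx)$ in the second renders both integrands at least $\alpha$ times the respective squared Euclidean norm, whence
\begin{align*}
\Re\tilde{a}(\bU,\bU)\ge\alpha\left(\norm[\Lp{2}{\D}]{\curl\bU}^2+\norm[\Lp{2}{\D}]{\bU}^2\right)=\alpha\,\norm[\hcurlbf{\D}]{\bU}^2.
\end{align*}
Thus $\tilde{a}$ is coercive with constant $\alpha$ on the closed subspace $\hncurlbf{\D}\subset\hcurlbf{\D}$, and Lax--Milgram yields a unique $\bE\in\hncurlbf{\D}$ solving $\tilde{a}(\bE,\bV)=\tilde{F}(\bV)$ for all $\bV$, equivalently \eqref{eq:weak}.

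The a priori bound \eqref{eq:apriori} then drops out by testing with $\bV=\bE$: coercivity gives $\alpha\norm[\hcurlbf{\D}]{\bE}^2\le\Re\tilde{a}(\bE,\bE)=\Re\tilde{F}(\bE)\le|\fo{\bE}|\le\norm[\hncurlbf{\D}^*]{F}\,\norm[\hcurlbf{\D}]{\bE}$, and dividing by $\norm[\hcurlbf{\D}]{\bE}$ (the case $\bE=\bnul$ being trivial) gives the claim. The only genuine obstacle is the coercivity step---recognising that hypothesis \eqref{eq:alpha} is tuned precisely so that multiplication by $e^{\ii\theta}$ aligns the real parts of the stiffness and mass contributions and turns an otherwise indefinite form into a coercive one; the remaining ingredients are standard.
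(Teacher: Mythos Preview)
Your proof is correct and follows essentially the same approach as the paper: rotate the sesquilinear form by $e^{\ii\theta}$, verify continuity and coercivity (the latter via the pointwise hypothesis \eqref{eq:alpha}), and apply the complex Lax--Milgram lemma. You spell out the coercivity computation and the derivation of the a priori bound in more detail than the paper, but the argument is the same.
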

\begin{proof}
Under our assumptions, the sesquilinear
form $e^{\ii\theta} \ao{\cdot}{\cdot}$ is coercive and continuous on the space $\hncurlbf{\D}$, i.e.,
\begin{align}
&\Re(e^{\ii\theta} \ao{\bU}{\bU}) \geq \alpha\norm[\hcurlbf{\D}]{\bU}^2
\\
&\Re(e^{\ii\theta} \ao{\bU}{\bV}) 
  \leq C(\norm[\Lp{\infty}{\D;\IC^{3\times 3}}]{\mu^{-1}}+\norm[\Lp{\infty}{\D;\IC^{3\times 3}}]{\varepsilon})
                                                           \norm[\hcurlbf{\D}]{\bU}\norm[\hcurlbf{\D}]{\bV},
\end{align}
for all $\bU$, $\bV$ in $\hncurlbf{\D}$, 
and 
with a constant $C>0$ independent
of the parameters $\mu^{-1}$ and $\varepsilon$.
The complex
Lax-Milgram lemma (see, e.g., \cite[Chap.~VI, Thm.~1.4]{MR564653})
implies
\begin{align}
\bE\mapsto e^{\ii\theta}\ao{\bE}{\cdot}:
\hncurlbf{\D}\to\hncurlbf{\D}^*
\end{align}
to be an isomorphism, so that
\begin{align}
\bE\mapsto \ao{\bE}{\cdot}:
\hncurlbf{\D}\to\hncurlbf{\D}^*
\end{align}
is an isomorphism as well. Additionally, the Lax-Milgram lemma
gives the {\em a priori} bound on the solution in \eqref{eq:apriori}.
\end{proof}

Due to Remark \ref{rmk:firstorder}, we may recover the magnetic field
as $\bH:=\frac{\ii}{\omega}\mu^{-1}\curl\bE$, from where the pair
$\bE$ and $\bH$ belong to $\hcurlbf{\D}$ and solve
\eqref{eq:firstorder}.
\subsection{Domain perturbations}
\label{ssec:pert_dom}
We consider Maxwell's lossy cavity problem (Problem \ref{prob:weak})
on a family of domains given as perturbations of $\Dnul\subset\IR^3$,
an open and bounded Lipschitz domain henceforth referred to as the
\emph{nominal domain}. The set of admissible domain perturbations is
$\frakT$ and we set $\D_\bT:=\bT(\Dnul)$ for every $\bT\in\frakT$. In
order to consider Maxwell's equations on the family of domains
$\{\D_\bT\}_{\bT\in\frakT}$, we will require suitable extensions of
the data $\eps$, $\mu$ and $\bJ$ to every perturbed domain, as well as
assumptions on $\frakT$. Furthermore, in order to prove uniform
convergence rates of finite element solutions of Maxwell's equations
on these perturbed domains we enforce smoothness conditions on
$\Dnul\subset\IR^3$, the perturbations $\bT\in\frakT$ and the data
$\eps$, $\mu$ and $\bJ$.

\begin{assumption}
\label{ass:material_trafo_smooth}
Fix $N\in\IN$, $q>3$, $\vartheta\in (0,1)$, $\theta\in\IR$ and $\alpha,\alpha_s>0$.
We assume the existence of an open, convex and bounded domain
$\D_H\subset\IR^3$ such that $\D_\bT:=\bT(\Dnul)\subset\D_H$
for all $\bT\in\frakT$
and assume that the following conditions hold:
\begin{enumerate}
\item the \emph{nominal domain} $\Dnul\subset\IR^3$ 
is a bounded and path connected domain of class $\cC^{N,1}$,
\item \label{it:Assum_1_frakT} the set of admissible domain perturbations $\frakT$
is a compact subset of $\bm\cC^{N,1}(\Dnul)$ such that every
$\bT\in\frakT$ is bijective and such that $\bT^{-1}\in\bm\cC^{N,1}(\D_\bT)$,
$\det(\d\!\bT)>0$ everywhere on $\Dnul$ and
\begin{align}
\label{eq:vartheta_smooth}
\vartheta\leq\norm[\lp{\infty}{\Dnul}]{\det\d\!\bT},\quad
\norm[\lp{\infty}{\Dnul}]{\det\d\!\bT^{-1}},\quad
\norm[\bm\cC^{0,1}(\Dnul)]{\bT},\quad
\norm[\bm\cC^{0,1}(\D_{\bT})]{\bT^{-1}}\leq\vartheta^{-1},
\end{align}
\item the magnetic permeability is invertible everywhere on $\D_H$ and there holds that
$\eps$, $\mu$ and $\mu^{-1}$ belong to $\bm{W}^{N,\infty}(\D_H;\IC^{3\times 3})$, and
that $\bJ$ belongs to $\mathbf{W}^{N,q}(\div;\D_H)$,
\item \label{it:Assum_1_ellip_1} $\eps$ and $\mu^{-1}$ satisfy
\begin{align}
\label{eq:alpha_holdall_smooth}
\inf_{0\neq \bzeta\in\C^3}
\essinf_{\bx\in\D_H}\;
\min\left\{
\frac{\Re(\bzeta^\top
e^{\ii\theta}\mu(\bx)^{-1}\bar\bzeta)}{\norm[\C^3]{\bzeta}^2},
\frac{-\Re(\bzeta^\top
e^{\ii\theta}\omega^2\eps(\bx)\bar\bzeta)}{\norm[\C^3]{\bzeta}^2}\right\}\ge\alpha,
\end{align}
\item \label{it:Assum_1_ellip_2}  $\eps$ and $\mu$ satisfy
\begin{align}
\inf_{0\neq \bzeta\in\C^3}
\essinf_{\bx\in\D_H}\;
\min\left\{
\frac{\Re(\bzeta^\top
\mu(\bx)\bar\bzeta)}{\norm[\C^3]{\bzeta}^2},
\frac{\Re(\bzeta^\top
\eps(\bx)\bar\bzeta)}{\norm[\C^3]{\bzeta}^2}\right\}\ge\alpha_s.
\end{align}
\end{enumerate}
\end{assumption}
\begin{remark}
Note that item (\ref{it:Assum_1_ellip_1}) in Assumption \ref{ass:material_trafo_smooth} implies a rotated positivity
property on the permittivity $\mu$ that could be used to replace item (\ref{it:Assum_1_ellip_2}) in the same assumption.
We choose, however, to include both
conditions for brevity and simplicity, since they will be required for different purposes. Item (\ref{it:Assum_1_ellip_1}) allows us
to ensure existence and uniqueness of Maxwell's equations on each one of the uncertain domains (\emph{cf.}~Proposition \ref{prop:existence}), while item (\ref{it:Assum_1_ellip_2}) is required in \cite{AlbertiRoughMaxw} to ensure the unique solvability of an auxiliary problem and in order to prove the smoothness properties of solutions to Maxwell's equations (\emph{cf.}~Theorem \ref{thm:reg} below). 
\end{remark}

\begin{remark}
We recall the identity, $\bm\cC^{N,1}(\Dnul)=\Wsob{N+1}{\Dnul}$ (\emph{cf.}~\cite[Sec.~2.6.4]{DelfZol_2ndEd_2011}),
since we will often employ Sobolev norms of the transformations $\bT\in\frakT$.
\end{remark}

We can then consider the following family of
$\bT$-dependent problems.

\begin{problem}[Maxwell cavity problem on perturbed domains]
\label{prob:weak_T}
For each $\bT\in\frakT$, we seek
$\bE_\bT \in \hncurlbf{\D_\bT}$ such that, with
\begin{align}
\aot{\bU}{\bV}&:= \int_{\D_{\bT}} \mu^{-1}\curl\bU\cdot
\curl\overline{\bV}-\omega^2\eps\bU\cdot\overline{\bV}\dd \bx,\\
\fot{\bV}&:=-\ii\omega\int_{\D_\bT}\bJ\cdot
\overline{\bV}\dd \bx,
\end{align}
for all $\bU$, $\bV\in\hncurlbf{\D_\bT}$, it holds that
\begin{align}
\label{eq:weak_T}
\aot{\bE_\bT}{\bV}=\fot{\bV}\quad\forall\ \bV\in\hncurlbf{\D_\bT}.
\end{align}
\end{problem}

Under Assumption \ref{ass:material_trafo_smooth}, and arguing
as in Section \ref{sec:ex_uniq}, there exists a unique
solution $\bE_\bT\in\hncurlbf{\D_\bT}$ 
to Problem \ref{prob:weak_T} for each $\bT\in\frakT$
satisfying an {\em a priori} bound.

\begin{proposition}
\label{prop:existence_T}
Under Assumption \ref{ass:material_trafo_smooth} and for each
$\bT\in\frakT$, Problem \ref{prob:weak_T} has a
unique solution $\bE_\bT\in\hncurlbf{\D_\bT}$ satisfying
\begin{align}
\label{eq:apriori_T}
\norm[\hcurlbf{\D_\bT}]{\bE_\bT}\le
\frac{\omega}{\alpha}\norm[\Lp{2}{\D_\bT}]{\bJ},
\end{align}  
for $\alpha>0$ as in \eqref{eq:alpha_holdall_smooth}.
\end{proposition}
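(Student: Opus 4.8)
The plan is to reduce Problem \ref{prob:weak_T} on the moving domain $\D_\bT$ to the generic setting of Problem \ref{prob:weak} and to invoke Proposition \ref{prop:existence} verbatim with $\D=\D_\bT$. First I would check that $\D_\bT$ is an admissible domain for that framework, i.e.\ a bounded Lipschitz domain in $\IR^3$. By item (\ref{it:Assum_1_frakT}) of Assumption \ref{ass:material_trafo_smooth} the map $\bT$ is bijective with $\bT^{-1}\in\bm\cC^{N,1}(\D_\bT)$, and the uniform bi-Lipschitz bounds $\norm[\bm\cC^{0,1}(\Dnul)]{\bT},\norm[\bm\cC^{0,1}(\D_\bT)]{\bT^{-1}}\le\vartheta^{-1}$ in \eqref{eq:vartheta_smooth} guarantee that $\D_\bT=\bT(\Dnul)$ is the bi-Lipschitz image of the $\cC^{N,1}$ nominal domain $\Dnul$. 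Hence $\D_\bT$ is again a bounded Lipschitz domain, which is exactly what is needed for the spaces $\hcurlbf{\D_\bT}$, $\hncurlbf{\D_\bT}$, the trace theory, and the integration-by-parts formula \eqref{eq:green} of Section \ref{sec:FncSpc} to be available on $\D_\bT$.

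Next I would verify the hypotheses of Proposition \ref{prop:existence} for $\D=\D_\bT$. Restricting the coefficients from $\D_H$ to the subdomain $\D_\bT\subset\D_H$, item (iii) of Assumption \ref{ass:material_trafo_smooth} gives $\eps,\mu^{-1}\in\Lp{\infty}{\D_\bT;\IC^{3\times3}}$ and $\bJ\in\Lp{2}{\D_\bT}$. The essential point is the coercivity condition \eqref{eq:alpha}: because $\D_\bT\subset\D_H$, the essential infimum over $\bx\in\D_H$ appearing in \eqref{eq:alpha_holdall_smooth} is bounded above by the corresponding essential infimum over $\bx\in\D_\bT$, so \eqref{eq:alpha} holds on $\D_\bT$ with the same phase $\theta$ and the same constant $\alpha>0$. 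Proposition \ref{prop:existence} then applies and yields a unique solution $\bE_\bT\in\hncurlbf{\D_\bT}$ together with the abstract bound $\norm[\hcurlbf{\D_\bT}]{\bE_\bT}\le\frac1\alpha\norm[\hncurlbf{\D_\bT}^*]{F_\bT}$ as in \eqref{eq:apriori}.

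It then only remains to convert this dual-norm bound into the explicit estimate \eqref{eq:apriori_T}. For any $\bV\in\hncurlbf{\D_\bT}$, the Cauchy--Schwarz inequality gives $\modulo{\fot{\bV}}=\omega\modulo{(\bJ,\bV)_{\D_\bT}}\le\omega\norm[\Lp{2}{\D_\bT}]{\bJ}\,\norm[\Lp{2}{\D_\bT}]{\bV}\le\omega\norm[\Lp{2}{\D_\bT}]{\bJ}\,\norm[\hcurlbf{\D_\bT}]{\bV}$, where the last step uses that the $\hcurlbf{\D_\bT}$-norm dominates the $\Lp{2}{\D_\bT}$-norm. Taking the supremum over $\bV$ of unit $\hcurlbf{\D_\bT}$-norm yields $\norm[\hncurlbf{\D_\bT}^*]{F_\bT}\le\omega\norm[\Lp{2}{\D_\bT}]{\bJ}$, and inserting this into the bound of the previous paragraph gives precisely \eqref{eq:apriori_T}. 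I expect no genuine obstacle in this argument; the only point requiring care is the transfer of the Lipschitz and coercivity structure from the fixed domains $\Dnul$ and $\D_H$ to the moving domain $\D_\bT$, which is exactly why the uniform containment $\D_\bT\subset\D_H$ and the uniform $\cC^{N,1}$-bounds on the transformations are built into Assumption \ref{ass:material_trafo_smooth}.
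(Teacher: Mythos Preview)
Your proposal is correct and follows essentially the same approach as the paper: reduce Problem~\ref{prob:weak_T} to the setting of Proposition~\ref{prop:existence} with $\D=\D_\bT$, obtain the abstract Lax--Milgram bound $\norm[\hcurlbf{\D_\bT}]{\bE_\bT}\le\alpha^{-1}\norm[\hncurlbf{\D_\bT}^*]{F_\bT}$, and then bound the dual norm of $F_\bT$ by $\omega\norm[\Lp{2}{\D_\bT}]{\bJ}$. The paper's proof is terser, simply pointing back to the reasoning in Proposition~\ref{prop:existence}, whereas you spell out more carefully that $\D_\bT$ is Lipschitz and that the coercivity constant $\alpha$ transfers from $\D_H$ to $\D_\bT\subset\D_H$; these are welcome clarifications but not a different route.
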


\begin{proof}
The uniqueness and existence of a solution to Problem
\ref{prob:weak_T} follows from Assumption \ref{ass:material_trafo_smooth}
and the Lax-Milgram lemma as in the proof of Proposition
\ref{prop:existence}. 
Furthermore, the reasoning
in the proof of Proposition \ref{prop:existence} 
also yields
\begin{align}
\norm[\hcurlbf{\D_\bT}]{\bE_\bT}\le
\frac{1}{\alpha}\norm[\hncurlbf{\D_\bT}^*]{F_{\bJ,\bT}}.
\end{align}
The estimate \eqref{eq:apriori_T} then follows from
\begin{align}
\norm[\hncurlbf{\D_\bT}^*]{F_{\bJ,\bT}}\leq
\omega\norm[\Lp{2}{\D_\bT}]{\bJ}
\end{align}
since $\bJ\in\Lp{2}{\D_\bT}$ by assumption.
\end{proof}

\subsection{Pullback to the nominal domain}
\label{ssec:pullback}
As in \cite{AJSZ20,Jerez-Hanckes:2017aa}, rather than considering Maxwell's equations 
on each domain $\{\D_{\bT}\}_{\bT\in\frakT}$, we pull back Problem 
\ref{prob:weak_T} to a family of variational problems set on $\Dnul$
through an appropriate curl-conforming pullback given, for any $\bT\in\frakT$,  as the
extension to $\hcurlbf{\D_\bT}$ of
\begin{align}\label{eq:pullback}
\Phi_{\bT}(\bU):=\d\!\bT^\top(\bU\circ\bT)
\end{align}
for $\bU\in\bm\cC^\infty(\D_\bT;\IC^3)$.

\begin{lemma}[Lemma 2.2 in \cite{Jerez-Hanckes:2017aa}]
\label{lemma:trafo}
Under Assumption \ref{ass:material_trafo_smooth}, for each $\bT\in\frakT$ the
mapping in \eqref{eq:pullback}
can be extended to an
isomorphism
$\Phi_\bT:\hcurlbf{\D_\bT}\to\hcurlbf{\Dnul}$.
In addition,
$\Phi_\bT:\hncurlbf{\D_\bT} \to \hncurlbf{\Dnul}$
is an isomorphism. 
Furthermore, for $\bU\in\hcurlbf{\D_{\bT}}$, it
holds that
\begin{align}
\curl\Phi_\bT(\bU)=\det(\d\!\bT)\d\!\bT^{-1}\curl\bU\circ\bT
\end{align}
in $\Lp{2}{\Dnul}$.
\end{lemma}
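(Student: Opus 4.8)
The plan is to recognize $\Phi_\bT$ as the covariant (curl-conforming) Piola transform associated with the diffeomorphism $\bT$, and to establish the three assertions—the curl intertwining identity, the isomorphism property on $\hcurlbf{\cdot}$, and the preservation of the flipped Dirichlet trace—by first working on smooth vector fields and then extending by density. First I would fix $\bU\in\bm\cC^\infty(\D_\bT;\IC^3)$ and verify the pointwise identity $\curl\Phi_\bT(\bU)=\det(\d\!\bT)\,\d\!\bT^{-1}\,(\curl\bU)\circ\bT$ by a direct chain-rule computation: differentiating $\Phi_\bT(\bU)=\d\!\bT^\top(\bU\circ\bT)$ component-wise, the second-derivative terms of $\bT$ drop out by the symmetry of mixed partials (this is exactly why the covariant transform is curl-conforming), and a cofactor (Jacobi) identity for $\d\!\bT$ converts the remaining first-order terms into the claimed contravariant-Piola expression. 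This computation is the algebraic heart of the lemma and fixes the correct normalization $\det(\d\!\bT)\,\d\!\bT^{-1}$.

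Second, I would prove boundedness of $\Phi_\bT$ from $\hcurlbf{\D_\bT}$ to $\hcurlbf{\Dnul}$. Applying the change of variables $\bx=\bT^{-1}(\by)$ together with Assumption \ref{ass:material_trafo_smooth}, the $\Lp{2}{\Dnul}$-norm of $\Phi_\bT(\bU)$ is controlled by $\norm[\bm\cC^{0,1}(\Dnul)]{\bT}$ and the Jacobian bounds in \eqref{eq:vartheta_smooth}, hence by a constant depending only on $\vartheta$; the same estimate applied to the curl identity of the first step bounds $\norm[\Lp{2}{\Dnul}]{\curl\Phi_\bT(\bU)}$ in terms of $\norm[\Lp{2}{\D_\bT}]{\curl\bU}$. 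Since $\bm\cC^\infty(\D_\bT;\IC^3)$ is dense in $\hcurlbf{\D_\bT}$ (standard for Lipschitz domains, \emph{cf.}~\cite{Monk:2003aa}), these bounds extend $\Phi_\bT$ to a bounded operator on all of $\hcurlbf{\D_\bT}$, and the intertwining identity persists in $\Lp{2}{\Dnul}$ by passing to the limit.

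Third, invertibility follows by symmetry. Since $\bT^{-1}\in\bm\cC^{N,1}(\D_\bT)$ satisfies the analogous bounds in \eqref{eq:vartheta_smooth}, the covariant transform $\Phi_{\bT^{-1}}$ of the inverse map is likewise bounded $\hcurlbf{\Dnul}\to\hcurlbf{\D_\bT}$, and a chain-rule computation (again first on smooth fields, then by density) shows $\Phi_{\bT^{-1}}\circ\Phi_\bT=\mathrm{Id}$ and $\Phi_\bT\circ\Phi_{\bT^{-1}}=\mathrm{Id}$; hence $\Phi_\bT$ is an isomorphism with inverse $\Phi_{\bT^{-1}}$.

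Finally, to obtain the isomorphism between the subspaces with vanishing flipped Dirichlet trace, I would show that $\Phi_\bT$ maps $\hncurlbf{\D_\bT}$ onto $\hncurlbf{\Dnul}$. For smooth $\bU$ the covariant transform relates the tangential components on $\partial\Dnul$ to those of $\bU$ on $\partial\D_\bT$ through the boundary restriction of $\d\!\bT$, so that $\gamma_D^\times\bU=\bnul$ forces $\gamma_D^\times\Phi_\bT(\bU)=\bnul$, and the reverse implication follows from the same relation applied to $\Phi_{\bT^{-1}}$. I expect this trace step to be the main obstacle, because the flipped Dirichlet trace lives in the dual space $\boldsymbol{H}^{-\half}_{\div}(\partial\D)$, so the pointwise boundary relation valid for smooth fields must be transferred to $\hcurlbf{\cdot}$ by a duality/density argument—e.g.\ by testing the Green identity \eqref{eq:green} against pullbacks of smooth fields and exploiting that $\Phi_\bT$ already intertwines $(\cdot,\curl\cdot)$ up to the computed Jacobian factors. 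Combined with the density of smooth fields in $\hncurlbf{\cdot}$, this yields the claim and completes the proof.
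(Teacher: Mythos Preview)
The paper does not actually prove this lemma: it is stated with the attribution ``Lemma 2.2 in \cite{Jerez-Hanckes:2017aa}'' and no proof is given in the present text. Your proposal is therefore not comparable to a proof in this paper, but it is a correct and standard argument for the result---essentially the one found in \cite{Jerez-Hanckes:2017aa} and, in parts, in \cite[Sec.~3.9]{Monk:2003aa}: verify the curl intertwining identity on smooth fields via the chain rule and the cofactor formula, use the bounds in \eqref{eq:vartheta_smooth} together with a change of variables to get continuity, exhibit $\Phi_{\bT^{-1}}$ as the inverse, and transfer the tangential-trace relation by density. Your identification of the trace step as the most delicate point is accurate, and the strategy you outline (testing \eqref{eq:green} against pulled-back smooth fields) is exactly how it is handled.
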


We introduce the following family of $\bT$-dependent problems over
$\hncurlbf{\Dnul}$.

\begin{problem}[Nominal Maxwell cavity problem]
\label{prob:weak_nom_T}
For each $\bT\in\frakT$, we seek
$\widehat\bE_\bT\in\hncurlbf{\Dnul}$ such that, with
\begin{align}
\label{eq:ant_fnt_def}
\begin{aligned}
\ant{\widehat\bU}{\widehat\bV}&:=\int_{\Dnul}\left[
\frac{{\mu}^{-1}\circ\bT}{\det(\d\!\bT)}\d\!\bT\curl{\widehat\bU}
\cdot \d\!\bT\curl{\overline{{\widehat\bV}}}
-\omega^2(\eps\circ \bT)\det(\d\!\bT)\d\!\bT^{-\top}{\widehat\bU}\cdot
\d\!\bT^{-\top}\overline{{\widehat\bV}}\right] \d\!\widehat{\bx},\\
\fnt{{\widehat\bV}}&:=-\ii\omega\int_{\Dnul}
\det(\d\!\bT)(\bJ\circ \bT)\cdot
\d\!\bT^{-\top}\overline{{\widehat\bV}}\d\!\widehat{\bx},
\end{aligned}
\end{align}
for all $\widehat\bU$, $\widehat\bV\in\hncurlbf{\Dnul}$, it holds that
\begin{align}
\label{eq:weak_T_nom}
\ant{\widehat\bE_\bT}{\widehat\bV}=\fnt{{\widehat\bV}}
\quad\forall\ \widehat\bV\in\hncurlbf{\Dnul}.
\end{align}
\end{problem}

\begin{remark}\label{rmk:equiv}
Note that the sesquilinear and antilinear forms in \eqref{eq:ant_fnt_def}
may be written as
\begin{align}
\label{eq:ant_fnt_modifed}
\begin{aligned}
\ant{\widehat\bU}{\widehat\bV}&=\int_{\Dnul}\left[\mu_\bT^{-1}\curl{\widehat\bU}
\cdot\curl{\overline{{\widehat\bV}}}
-\omega^2\eps_\bT{\widehat\bU}\cdot
\overline{{\widehat\bV}}\right] \d\!\widehat{\bx},\\
\fnt{{\widehat\bV}}&=-\ii\omega\int_{\Dnul}
\bJ_\bT\cdot
\overline{{\widehat\bV}}\d\!\widehat{\bx},
\end{aligned}
\end{align}
where
\begin{equation}\label{eq:param_p}
\begin{split}
\mu_\bT&:=\det(\d\!\bT)\d\!\bT^{-1}\left(\mu\circ\bT\right)\d\!\bT^{-\top},\\
\eps_\bT&:=\det(\d\!\bT)\d\!\bT^{-1}\left(\eps\circ\bT\right)\d\!\bT^{-\top},\\
\bJ_\bT&:=\det(\d\!\bT)\d\!\bT^{-1}\left(\bJ\circ\bT\right).
\end{split}
\end{equation}
Also, Lemma 3.59 in \cite{Monk:2003aa} yields,
\begin{align}
\div\bJ_\bT=\det(\d\!\bT)\div\left(\bJ\circ\bT\right),
\end{align}
whenever $\bJ\in\hdiv{\D_H}$.
\end{remark}

Due to Lemma \ref{lemma:trafo}, Problem \ref{prob:weak_nom_T} is
equivalent to Problem \ref{prob:weak_T}, in the sense that, for a fixed $\bT\in\frakT$,
$\widehat\bE_\bT\in\hncurlbf{\Dnul}$ is a solution to Problem
\ref{prob:weak_nom_T} if and only if
$\Phi_{\bT}^{-1}(\widehat\bE_\bT)\in\hncurlbf{\D_\bT}$ is a solution to
Problem \ref{prob:weak_T}; we refer to \cite{AJSZ20,Jerez-Hanckes:2017aa} for more
details.

\begin{theorem}
\label{thm:cont}
Under Assumption \ref{ass:material_trafo_smooth} and for all $\bT\in\frakT$, Problem
\ref{prob:weak_nom_T} has a unique solution
$\widehat\bE_\bT\in\hncurlbf{\Dnul}$ satisfying
\begin{align}
\label{eq:apriori_T_nom}
\norm[\hcurlbf{\Dnul}]{\widehat\bE_\bT}\le
C\frac{\omega}{\alpha}\norm[\Lp{2}{\D_\bT}]{\bJ},
\end{align}
where $\alpha>0$ is as in \eqref{eq:alpha_holdall_smooth} and
the constant $C>0$ is independent of $\bT\in\frakT$.
\end{theorem}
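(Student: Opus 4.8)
The plan is to establish existence, uniqueness, and the a priori bound by transferring the corresponding statements from Problem \ref{prob:weak_T} (Proposition \ref{prop:existence_T}) through the pullback isomorphism $\Phi_\bT$, using the equivalence of the two problems already noted after Remark \ref{rmk:equiv}. First I would invoke that equivalence directly: since $\widehat\bE_\bT \in \hncurlbf{\Dnul}$ solves Problem \ref{prob:weak_nom_T} if and only if $\Phi_\bT^{-1}(\widehat\bE_\bT) \in \hncurlbf{\D_\bT}$ solves Problem \ref{prob:weak_T}, and Proposition \ref{prop:existence_T} guarantees a unique $\bE_\bT$ for the latter, existence and uniqueness of $\widehat\bE_\bT = \Phi_\bT(\bE_\bT)$ follow immediately. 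This disposes of the qualitative assertions without further work.

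The substance of the theorem is the a priori bound \eqref{eq:apriori_T_nom}, and the natural route is to bound $\norm[\hcurlbf{\Dnul}]{\widehat\bE_\bT} = \norm[\hcurlbf{\Dnul}]{\Phi_\bT(\bE_\bT)}$ by the operator norm of $\Phi_\bT$ acting on $\hcurlbf{\D_\bT}$, and then apply \eqref{eq:apriori_T} from Proposition \ref{prop:existence_T}. Concretely I would write, for $\bU \in \hcurlbf{\D_\bT}$,
\begin{align*}
\norm[\Lp{2}{\Dnul}]{\Phi_\bT(\bU)}^2 &= \int_{\Dnul} \abs{\d\!\bT^\top (\bU\circ\bT)}^2 \d\!\widehat{\bx}
\leq \norm[\bm\cC^{0,1}(\Dnul)]{\bT}^2 \int_{\Dnul} \abs{\bU\circ\bT}^2 \d\!\widehat{\bx},
\end{align*}
and handle the curl term via the identity $\curl\Phi_\bT(\bU) = \det(\d\!\bT)\,\d\!\bT^{-1}\,\curl\bU \circ \bT$ from Lemma \ref{lemma:trafo}. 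The change of variables $\bx = \bT(\widehat\bx)$ introduces a factor $\det(\d\!\bT^{-1}) = 1/\det(\d\!\bT)$ when passing from integrals over $\Dnul$ to integrals over $\D_\bT$; each such factor, together with the Lipschitz norms of $\bT$ and $\bT^{-1}$ and the determinant bounds, is controlled \emph{uniformly} in $\bT$ by the quantities $\vartheta, \vartheta^{-1}$ fixed in \eqref{eq:vartheta_smooth} of Assumption \ref{ass:material_trafo_smooth}. Collecting these yields $\norm[\hcurlbf{\Dnul}]{\Phi_\bT(\bU)} \leq C \norm[\hcurlbf{\D_\bT}]{\bU}$ with $C$ depending only on $\vartheta$, hence independent of $\bT\in\frakT$.

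Combining the operator-norm bound with \eqref{eq:apriori_T} gives
\begin{align*}
\norm[\hcurlbf{\Dnul}]{\widehat\bE_\bT} \leq C \norm[\hcurlbf{\D_\bT}]{\bE_\bT}
\leq C\frac{\omega}{\alpha}\norm[\Lp{2}{\D_\bT}]{\bJ},
\end{align*}
which is exactly \eqref{eq:apriori_T_nom}. The main obstacle I anticipate is purely bookkeeping rather than conceptual: namely tracking the determinant and Jacobian factors that appear under the change of variables in both the $\Lp{2}{}$ and the curl terms, and verifying that every one of them is bounded above and below by powers of $\vartheta$ uniformly over the compact admissible set $\frakT$. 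Since $\frakT$ is a compact subset of $\bm\cC^{N,1}(\Dnul)$ with the quantitative bounds \eqref{eq:vartheta_smooth} built into Assumption \ref{ass:material_trafo_smooth}, this uniformity is guaranteed by hypothesis, so the constant $C$ can be made explicit in terms of $\vartheta$ alone and no compactness-extraction argument is actually needed.
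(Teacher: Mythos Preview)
Your proposal is correct but follows a genuinely different route from the paper. The paper works directly on $\Dnul$: it invokes \cite[Prop.~2.11]{AJSZ20} to obtain coercivity and continuity of $\hat a_{\bT}(\cdot,\cdot)$ with explicit constants (coercivity constant $\alpha\vartheta^3$), applies the complex Lax--Milgram lemma to get $\norm[\hcurlbf{\Dnul}]{\widehat\bE_\bT}\le \vartheta^{-3}\frac{\omega}{\alpha}\norm[\Lp{2}{\Dnul}]{\bJ_\bT}$, and then bounds $\norm[\Lp{2}{\Dnul}]{\bJ_\bT}\le\vartheta^{-5/2}\norm[\Lp{2}{\D_\bT}]{\bJ}$ by a change of variables. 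You instead transfer the physical-domain estimate of Proposition~\ref{prop:existence_T} through a uniform operator-norm bound on $\Phi_\bT$. Your argument is self-contained and avoids the external citation for coercivity; the paper's argument has the advantage that the direct coercivity estimate for $\hat a_{\bT}$ on $\Dnul$ is reused verbatim in later results (notably in the discrete setting, Theorem~\ref{thm:disc}), and it tracks the $\vartheta$-dependence of the constant more explicitly at each step.
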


\begin{proof}
Under our assumptions, Proposition 2.11 in \cite{AJSZ20} ensures that
\begin{align}
\Re(e^{i\theta}\ant{\widehat\bU}{\widehat\bU})\geq
\alpha\vartheta^3\norm[\hcurlbf{\Dnul}]{\widehat\bU}^2\quad\mbox{and}\quad
\modulo{\ant{\widehat\bU}{\widehat\bV}}\leq C\norm[\hcurlbf{\Dnul}]{\widehat\bU}
\norm[\hcurlbf{\Dnul}]{\widehat\bV}
\end{align}
for all $\bU$, $\bV\in\hcurlbf{\Dnul}$ and all $\bT\in\frakT$, where
the positive continuity constant $C$ depends on $\vartheta$ but is
independent of $\bT\in\frakT$. The complex Lax-Milgram Lemma then
ensures the existence and uniqueness of the solution to Problem
\ref{prob:weak_nom_T} for each $\bT\in\frakT$ and the {\em a priori}
bound
\begin{align}
\norm[\hcurlbf{\Dnul}]{\widehat\bE_\bT}\leq \vartheta^{-3}\frac{\omega}{\alpha}\norm[\Lp{2}{\Dnul}]{\bJ_\bT},
\end{align}
where $\bJ_\bT$ is as in Remark \ref{rmk:equiv}.
Assumption \ref{ass:material_trafo_smooth} and a change of variables yield,
\begin{align}\label{eq:rhs_bound}
\norm[\Lp{2}{\Dnul}]{\bJ_\bT}\leq\vartheta^{-2}\norm[\Lp{2}{\Dnul}]{\bJ\circ\bT}
\leq\vartheta^{-\frac{5}{2}}\norm[\Lp{2}{\D_\bT}]{\bJ},
\end{align}
and \eqref{eq:apriori_T_nom} follows.
\end{proof}

\begin{remark}\label{rmk:lip_assumption}
Note that we have not yet made use of the smoothness properties of the domain $\Dnul$ nor of
the parameters $\varepsilon$, $\mu$ and $\bJ$ nor of the transformations $\bT\in\frakT$ specified
in Assumption \ref{ass:material_trafo_smooth}. In fact, all of the results in Sections \ref{ssec:pert_dom} and
\ref{ssec:pullback} hold with $N=0$ in Assumption \ref{ass:material_trafo_smooth}.
\end{remark}

\subsection{Spatial regularity}
\label{ssec:regularity}

We continue by recalling a regularity statement for the solution of
\eqref{eq:firstorder} from \cite{AlbertiRoughMaxw} 
(also, see 
\cite{MR0233555,MR657071} 
for earlier but less sharp
results establishing $\boldsymbol{H}^1$-regularity).

\begin{theorem}[Theorem 9 in \cite{AlbertiRoughMaxw}]
\label{thm:reg}
Fix $N\in\IN$, $q>3$ and $\alpha_s>0$ and let $\D\subset\IR^3$ be an open and bounded domain of class $\cC^{N,1}$. 
Assume the parameters $\varepsilon$, $\mu$ and $\bJ$ to satisfy
\begin{gather}\label{eq:regularity_cond}
\begin{gathered}
\eps,\ \mu\in \bm{W}^{N,q}(\D;\IC^{3\times 3}),
\quad \bJ\in\mathbf{W}^{N-1,q}(\div;\D),\\
\inf_{0\neq \bzeta\in\C^3}
\essinf_{\bx\in\D}\;
\min\left\{
\frac{\Re(\bzeta^\top
\mu(\bx)\bar\bzeta)}{\norm[\C^3]{\bzeta}^2},
\frac{\Re(\bzeta^\top
\eps(\bx)\bar\bzeta)}{\norm[\C^3]{\bzeta}^2}\right\}\ge\alpha_s,
\end{gathered}
\end{gather}
and that the imaginary parts of $\varepsilon$ and $\mu$ are symmetric and
let $R>0$ be such that
$$R>\max(\norm[\bm{W}^{N,q}(\D;\IC^{3\times 3})]{\eps},\norm[\bm{W}^{N,q}(\D;\IC^{3\times 3})]{\mu}).$$
Then, there exists a positive constant $C$ depending on $R$,
$\omega$, $q$, $\alpha_s$  and $\D$ such that any weak solution pair
$\bE$, $\bH\in\hcurlbf{\D}$ of \eqref{eq:firstorder} belong to
$\mathbf{W}^{N,q}(\D)$ and satisfy 
\begin{align}
\label{eq:regularity_stated}
\norm[\mathbf{W}^{N,q}(\D)]{\bE}+\norm[\mathbf{W}^{N,q}(\D)]{\bH}
&\le C(\norm[\Lp{2}{\D}]{\bE} + \norm[\Lp{2}{\D}]{\bH} +\norm[\mathbf{W}^{N-1,q}(\div;\D)]{\bJ}).
\end{align}
\end{theorem}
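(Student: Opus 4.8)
The plan is to prove \eqref{eq:regularity_stated} by a regularity bootstrap, inducting on the order of differentiation and repeatedly invoking an elliptic (div--curl) regularity estimate for vector fields with rough, but positive, coefficients. First I would extract from the first-order system \eqref{eq:firstorder} the pointwise identities $\curl\bE=-\ii\omega\mu\bH$ and $\curl\bH=\ii\omega\eps\bE+\bJ$, and, by taking divergences and using $\div\curl=0$, the two constraints $\div(\mu\bH)=\bnul$ and $\div(\eps\bE)=-(\ii\omega)^{-1}\div\bJ$. The weak formulation, together with the PEC condition, encodes the homogeneous tangential trace $\gamma_D^\times\bE=\bnul$ and a complementary normal-type condition for $\mu\bH$. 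Thus each of $\bE$ and $\bH$ is characterized, up to data, by prescribing its curl, the divergence of a positive-definite weighting of it, and a homogeneous boundary trace---precisely the data controlled by a div--curl elliptic regularity theory.

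The induction is on $k$, showing $\bE,\bH\in\Wsob{k,q}{\D}$ for $k=0,1,\dots,N$. The base case is the hypothesis $\bE,\bH\in\hcurlbf{\D}$, i.e.\ membership in $\Lp{2}{\D}$; an initial Meyers-type higher-integrability step upgrades this to the $\Lp{q}{\D}$ scale. For the inductive step I would use that, since $q>3=d$, multiplication by a fixed $\Wsob{N,q}{\D}$ function maps $\Wsob{k,q}{\D}$ into itself for every $0\le k\le N$ (the Banach-algebra/multiplication estimate). Hence, assuming $\bE,\bH\in\Wsob{k,q}{\D}$ with $k<N$, one obtains $\curl\bE=-\ii\omega\mu\bH\in\Wsob{k,q}{\D}$ and $\curl\bH=\ii\omega\eps\bE+\bJ\in\Wsob{k,q}{\D}$ (here $\bJ\in\Wsob{N-1,q}{\div;\D}$ is used), as well as $\div(\eps\bE)=-(\ii\omega)^{-1}\div\bJ\in\wsob{k,q}{\D}$ and $\div(\mu\bH)=\bnul$.

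The heart of the argument is converting this control of $\curl\bU$ and of $\div(a\bU)$ (with $a\in\{\eps,\mu\}$) into a full gradient bound, thereby gaining one derivative. I would split $\bU=\bW+\nabla p$, where $\bW$ is an explicit field realizing the prescribed curl with the correct regularity, and $p$ solves the scalar divergence-form problem $\div(a\nabla p)=\div(a\bU)-\div(a\bW)$ with the induced boundary data. Unique solvability of this auxiliary scalar problem is exactly where the positivity condition on $\eps$ and $\mu$ (item~(\ref{it:Assum_1_ellip_2}), the constant $\alpha_s$) and the symmetry of the imaginary parts enter. Scalar elliptic regularity in divergence form, with $\Wsob{k,q}{\D}$ coefficients on a $\cC^{N,1}$ domain, then gives $p\in\wsob{k+2,q}{\D}$, whence $\nabla p\in\Wsob{k+1,q}{\D}$ and $\bU\in\Wsob{k+1,q}{\D}$. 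Applying this to $\bU=\bE$ and $\bU=\bH$ closes the induction, and tracking the constants through each step---each bounded in terms of $R$, $\omega$, $q$, $\alpha_s$ and $\D$---assembles the estimate \eqref{eq:regularity_stated}.

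The hard part will be the non-Hilbertian ($q\neq 2$) elliptic regularity for coefficients of only finite Sobolev smoothness: unlike the $L^2$ theory, one cannot rely on energy or spectral arguments and must instead invoke Calder\'on--Zygmund and Agmon--Douglis--Nirenberg type estimates, iterating them via the algebra property $q>d$ to reach arbitrary order $N$. Two points require particular care: the initial jump from the $\Lp{2}{\D}$ datum to the $\Lp{q}{\D}$ scale (the Meyers integrability gap), and the rigorous treatment of the tangential and normal boundary traces on the $\cC^{N,1}$ boundary within the $L^q$-based spaces, so that the decomposition $\bU=\bW+\nabla p$ respects the homogeneous boundary conditions at each inductive level.
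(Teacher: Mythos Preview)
The paper does not supply its own proof of this theorem: it is quoted verbatim as Theorem~9 of \cite{AlbertiRoughMaxw}, and the only accompanying text is a remark that the dependence of the constant $C$ on $\eps$ and $\mu$ solely through $R$ can be extracted from the proof in \cite{AlbertiRoughMaxw}. There is therefore nothing in the present paper to compare your argument against.

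That said, your outline is broadly in line with how such results are established in the literature (and, in particular, with the strategy in \cite{AlbertiRoughMaxw}): reduce the vector problem to a scalar divergence-form elliptic problem via a Helmholtz-type splitting, use the positivity of $\eps$ and $\mu$ to solve the auxiliary problem, and bootstrap using the algebra property of $\Wsob{k,q}{\D}$ for $q>3$. One point to be careful about: the theorem as stated does not mention boundary conditions, yet you invoke the PEC condition $\gamma_D^\times\bE=\bnul$; in \cite{AlbertiRoughMaxw} the precise trace conditions (tangential for $\bE$, normal for $\mu\bH$) are part of the hypotheses and must be carried through the induction. Your identification of the two delicate steps---the initial $\Lp{2}{\D}\to\Lp{q}{\D}$ gain and the $L^q$-trace theory on a $\cC^{N,1}$ boundary---is accurate; these are precisely the places where the argument in \cite{AlbertiRoughMaxw} requires the most work.
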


\begin{remark}
The fact that the constant $C$ in Theorem \ref{thm:reg} depends on
$\mu$ and $\eps$ only through $R>0$ is not explicitly stated in
\cite{AlbertiRoughMaxw}, but follows from the proof of
\cite[Thm.~9]{AlbertiRoughMaxw} and the references therein.
\end{remark}
We now adapt Theorem \ref{thm:reg} to our setting and prove,
under Assumption \ref{ass:material_trafo_smooth},
a uniform (on $\bT\in\frakT$) smoothness result for the solution of Problem
\ref{prob:weak_nom_T}.

\begin{theorem}\label{thm:unif_reg}
Let Assumption \ref{ass:material_trafo_smooth} hold. 
Then, for each $\bT\in\frakT$, the solution
$\widehat\bE_\bT\in\hncurlbf{\Dnul}$ to Problem \ref{prob:weak_nom_T}
belongs to $\Wsob{N,q}{\curl;\Dnul}$, with the bound
\begin{align}\label{eq:regularitycurl}
\norm[\Wsob{N,q}{\curl;\Dnul}]{\widehat\bE_\bT}\le C
\norm[\Wsob{N-1,q}{\div;\D_H}]{\bJ},
\end{align}
where the constant $C>0$ depends on $\frakT$ but is
independent of $\bT\in\frakT$.
\end{theorem}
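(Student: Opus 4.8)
The plan is to transfer the regularity statement of Theorem~\ref{thm:reg} to the nominal domain $\Dnul$ by exploiting the equivalence between Problem~\ref{prob:weak_nom_T} and Problem~\ref{prob:weak_T} established via the pullback $\Phi_\bT$. First I would observe that, for fixed $\bT\in\frakT$, the solution $\widehat\bE_\bT$ of the nominal problem equals $\Phi_\bT(\bE_\bT)$, where $\bE_\bT\in\hncurlbf{\D_\bT}$ solves the physical problem on $\D_\bT$. Since the physical fields $\bE_\bT$ and $\bH_\bT:=\tfrac{\ii}{\omega}\mu^{-1}\curl\bE_\bT$ solve the first-order system \eqref{eq:firstorder} on $\D_\bT$, I would like to apply Theorem~\ref{thm:reg} directly on $\D_\bT$. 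However $\frakT$ is only $\cC^{N,1}$-regular and $\D_\bT$ need not itself be of class $\cC^{N,1}$ with uniformly controlled constants; this is where I would instead apply Theorem~\ref{thm:reg} on the \emph{nominal} domain $\Dnul$ (which is $\cC^{N,1}$ by Assumption~\ref{ass:material_trafo_smooth}(i)) to the pulled-back problem. Concretely, the transformed coefficients $\mu_\bT,\eps_\bT$ from \eqref{eq:param_p} and the transformed current $\bJ_\bT$ play the role of $\mu,\eps,\bJ$ in Theorem~\ref{thm:reg}.

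The key steps are then as follows. (1) Verify that $\widehat\bE_\bT$ together with $\widehat\bH_\bT:=\tfrac{\ii}{\omega}\mu_\bT^{-1}\curl\widehat\bE_\bT$ forms a weak solution of the first-order Maxwell system \eqref{eq:firstorder} on $\Dnul$ with coefficients $\mu_\bT,\eps_\bT$ and source $\bJ_\bT$; this follows from Remark~\ref{rmk:equiv} and the fact that Problem~\ref{prob:weak_nom_T} is the weak form of the corresponding second-order equation. (2) Check that these transformed coefficients satisfy the hypotheses of Theorem~\ref{thm:reg}: namely, that $\mu_\bT,\eps_\bT\in\bm{W}^{N,q}(\Dnul;\IC^{3\times 3})$ with norms bounded \emph{uniformly} over $\bT\in\frakT$, that $\bJ_\bT\in\mathbf{W}^{N-1,q}(\div;\Dnul)$ uniformly, that the lower ellipticity bound \eqref{eq:regularity_cond} holds with some $\alpha_s'>0$ uniform in $\bT$, and that the symmetry of the imaginary parts is preserved (the congruence $\d\!\bT^{-1}(\cdot)\d\!\bT^{-\top}$ is a symmetric transformation, so symmetry of $\Im\mu,\Im\eps$ transfers). (3) Invoke Theorem~\ref{thm:reg} on $\Dnul$ to conclude $\widehat\bE_\bT\in\mathbf{W}^{N,q}(\Dnul)$ with $\curl\widehat\bE_\bT$ controlled through $\widehat\bH_\bT$, yielding $\widehat\bE_\bT\in\Wsob{N,q}{\curl;\Dnul}$. (4) Combine the resulting estimate \eqref{eq:regularity_stated} with the $\hcurlbf{\Dnul}$ a~priori bound from Theorem~\ref{thm:cont} to absorb the $\Lp{2}{\Dnul}$ terms on the right-hand side, and trace every constant back to $\frakT$ via \eqref{eq:vartheta_smooth}, obtaining \eqref{eq:regularitycurl}.

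The main obstacle will be step (2): establishing that the constant $C$ in the resulting bound is \emph{uniform} over $\frakT$. This requires controlling the $\bm{W}^{N,q}$-norms of $\mu_\bT,\eps_\bT,\bJ_\bT$ and the uniform ellipticity constant $\alpha_s'$ in terms only of the data $\mu,\eps,\bJ$ on $\D_H$ and the compactness bounds on $\frakT$ in \eqref{eq:vartheta_smooth}. The formulas \eqref{eq:param_p} involve products of $\det(\d\!\bT)$, $\d\!\bT^{-1}$, and the compositions $\mu\circ\bT$, each of which must be estimated in $\bm{W}^{N,q}$; this uses the multiplicative (Leibniz/Banach-algebra) structure of $\bm{W}^{N,q}$ for $q>3$ on the bounded Lipschitz domain $\Dnul$, together with the chain rule for Sobolev compositions (controlling $\|\mu\circ\bT\|_{\bm{W}^{N,q}}$ via $\|\mu\|_{\bm{W}^{N,q}(\D_H)}$ and $\|\bT\|_{\bm\cC^{N,1}}$) and the identification $\bm\cC^{N,1}(\Dnul)=\Wsob{N+1}{\Dnul}$ recalled earlier. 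Since $\frakT$ is compact in $\bm\cC^{N,1}(\Dnul)$ and $\det(\d\!\bT)$ is bounded away from zero by $\vartheta$, all these quantities admit bounds depending only on $\frakT$ and the fixed data, which yields the $\bT$-independent constant $C$. The verification of the uniform lower ellipticity bound for $\eps_\bT,\mu_\bT$ follows the same congruence-transformation argument already used for \eqref{eq:alpha_holdall_smooth} in the proof of Theorem~\ref{thm:cont}, again with constants controlled through $\vartheta$.
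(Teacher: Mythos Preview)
Your proposal is correct and follows essentially the same route as the paper: apply Theorem~\ref{thm:reg} on the fixed nominal domain $\Dnul$ to the first-order system satisfied by $(\widehat\bE_\bT,\widehat\bH_\bT)$ with pulled-back coefficients $\mu_\bT,\eps_\bT,\bJ_\bT$, verify the required $\bm{W}^{N,q}$-bounds and ellipticity for these coefficients via product and chain rules, and then close the estimate using the $\hcurlbf{\Dnul}$ a~priori bound from Theorem~\ref{thm:cont} together with compactness of $\frakT\Subset\bm\cC^{N,1}(\Dnul)$ to make all constants uniform in $\bT$. The paper carries out exactly these steps, with the final uniformity obtained by noting that the intermediate constants depend continuously on $\bT$ and then invoking compactness.
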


\begin{proof}
Theorem \ref{thm:cont} yields the existence and uniqueness of a
solution $\widehat\bE_\bT\in\hncurlbf{\Dnul}$ to Problem
\ref{prob:weak_nom_T} for each $\bT\in\frakT$. Due to Remark
\ref{rmk:firstorder} and recalling the notation therein introduced, we have
\begin{align}
\widehat\bH_\bT:=\imath\omega^{-1}\mu_\bT^{-1}\curl\widehat\bE_\bT,
\end{align}
then $\widehat\bE_\bT$ and $\widehat\bH_\bT$ are weak solution pair 
to the system:
\begin{align}
\label{eq:firstorder_Dnul}
\begin{aligned}
\curl \widehat\bE_\bT + \ii\omega \mu_\bT \widehat\bH_\bT &= \bnul &&\text{in }\Dnul,\\
\ii\omega\eps_\bT\widehat\bE_\bT -\curl \widehat\bH_\bT &=-\bJ_\bT&&\text{in }\Dnul.
\end{aligned}
\end{align}
The product rule then yields
\begin{align}\label{eq:par_T_est}
\begin{aligned}
\norm[\Wsob{N,q}{\Dnul}]{\mu_\bT}&\leq
C(\bT)\norm[\Wsob{N,q}{\Dnul}]{\mu\circ \bT},\\
\norm[\Wsob{N,q}{\Dnul}]{\eps_\bT}&\leq
C(\bT)\norm[\Wsob{N,q}{\Dnul}]{\eps\circ \bT},\\
\norm[\Wsob{N-1,q}{\Dnul}]{\bJ_\bT}&\leq
C(\bT)\norm[\Wsob{N-1,q}{\Dnul}]{\bJ\circ \bT},\\
\norm[\Wsob{N-1,q}{\Dnul}]{\div\bJ_\bT}&\leq
C(\bT)\norm[\Wsob{N-1,q}{\Dnul}]{\div\bJ_\bT\circ \bT},
\end{aligned}
\end{align}
where the constant $C(\bT)>0$ depends continuously on
$\bT\in\frakT\Subset\bm\cC^{N,1}(\Dnul)$.
Repeated application of the chain rule
(\emph{cf.}~\cite[Lemma 1]{PCPAR_1972_b} and
\cite[Lemma 3]{PCPAR_1972}) yields,
\begin{align}
\norm[\Wsob{N,q}{\Dnul}]{\eps\circ \bT}
&\leq C \left(1+\norm[\Wsob{N,\infty}{\Dnul}]{\bT}\right)^N
\norm[\lp{\infty}{\Dnul}]{\det (\d\!\bT)^{-1}}^\frac{1}{q}
\norm[\Wsob{N,q}{\D_\bT}]{\eps}\\
&\leq C \left(1+\norm[\Wsob{N,\infty}{\Dnul}]{\bT}\right)^N
\norm[\lp{\infty}{\Dnul}]{\det (\d\!\bT)^{-1}}^\frac{1}{q}
\norm[\Wsob{N,q}{\D_H}]{\eps},\label{eq:eps_est}
\end{align}
where the constant $C>0$ is independent of $\bT\in\frakT$.
Analogously,
\begin{align}\label{eq:mu_J_divJ_est}
\begin{aligned}
\norm[\Wsob{N,q}{\Dnul}]{\mu\circ \bT}
&\leq C \left(1+\norm[\Wsob{N,\infty}{\Dnul}]{\bT}\right)^N
\norm[\lp{\infty}{\Dnul}]{\det (\d\!\bT)^{-1}}^\frac{1}{q}
\norm[\mathbf{W}^{N,q}(\D_H)]{\mu},\\
\norm[\Wsob{N-1,q}{\Dnul}]{\bJ\circ \bT}
&\leq C \left(1+\norm[\Wsob{N-1,\infty}{\Dnul}]{\bT}\right)^{N-1}
\norm[\lp{\infty}{\Dnul}]{\det (\d\!\bT)^{-1}}^\frac{1}{q}
\norm[\mathbf{W}^{N-1,q}(\D_H)]{\bJ},\\
\norm[\Wsob{N-1,q}{\Dnul}]{\div\bJ\circ \bT}
&\leq C \left(1+\norm[\Wsob{N-1,\infty}{\Dnul}]{\bT}\right)^{N-1}
\norm[\lp{\infty}{\Dnul}]{\det (\d\!\bT)^{-1}}^\frac{1}{q}
\norm[\mathbf{W}^{N-1,q}(\D_H)]{\div\bJ}.
\end{aligned}
\end{align}
The combination of the estimates in \eqref{eq:par_T_est} with those in
\eqref{eq:eps_est} and \eqref{eq:mu_J_divJ_est} imply that 
\begin{align}
\eps_\bT,\ \mu_\bT\in \bm{W}^{N,q}(\Dnul;\IC^{3\times 3}),
\quad \bJ_\bT\in\mathbf{W}^{N-1,q}(\div;\Dnul).
\end{align}
Furthermore, 
for every $\bm\zeta\in\IC^3$ and $\widehat{\bx}\in\Dnul$ we have,
due to Assumption \ref{ass:material_trafo_smooth}, that
\begin{align}\label{eq:ellip_alphas}
\modulo{\bm\zeta^\top\left(\d\!\bT^{-1}\left(\eps\circ\bT\right)\d\!\bT^{-\top}\right)(\widehat{\bx})\bm\zeta}&\geq\alpha_s\norm[\IC^3]{\d\!\bT^{-\top}(\widehat{\bx})\bm\zeta}^2\\
&\geq\alpha_s\norm[\IC^{3\times 3}]{\d\!\bT(\widehat{\bx})}^{-2}\norm[\IC^3]{\bm\zeta}^2\geq c\alpha_s\vartheta^2\norm[\IC^3]{\bm\zeta}^2,
\end{align}
with an analogous computation for
$\modulo{\bm\zeta^\top(\d\!\bT^{-1}\mu\circ\bT\d\!\bT^{-\top})(\widehat{\bx})\bm\zeta}$,
where the constant $c>0$ follows from the equivalence of norms over finite dimensional spaces
and is independent of $\bT\in\frakT$. 
Therefore, there holds that
\begin{align}
\inf_{0\neq \bzeta\in\C^3}
\essinf_{\bx\in\Dnul}\;
\min\left\{
\frac{\Re(\bzeta^\top
\mu_\bT(\widehat{\bx})\bar\bzeta)}{\norm[\C^3]{\bzeta}^2},
\frac{\Re(\bzeta^\top
\eps_\bT(\widehat{\bx})\bar\bzeta)}{\norm[\C^3]{\bzeta}^2}\right\}\ge c\vartheta^3\alpha_s,
\end{align}
where the additional power in $\vartheta$ follows from the bound
for $\det(\bT)$ in Assumption \ref{ass:material_trafo_smooth}. Theorem \ref{thm:reg}
then ensures that $\widehat\bE_\bT$ and $\widehat\bH_\bT$ belong
to $\Wsob{N,q}{\Dnul}$ together with the bound
\begin{align}
\label{eq:regularity_T}
\norm[\Wsob{N,q}{\Dnul}]{\widehat\bE_\bT}+\norm[\Wsob{N,q}{\Dnul}]{\widehat\bH_\bT}
&\le C(\norm[\Lp{2}{\Dnul}]{\widehat\bE_\bT} + \norm[\Lp{2}{\Dnul}]{\widehat\bH_\bT} +\norm[\Wsob{N-1,q}{\div ;\Dnul}]{\bJ_\bT}),
\end{align}
where the constant $C>0$ depends on
$R>\max(\norm[\bm{W}^{N,q}(\Dnul;\IC^{3\times 3})]{\eps_\bT},\norm[\bm{W}^{N,q}(\Dnul;\IC^{3\times 3})]{\mu_\bT})$,
$\omega$, $q$, $\alpha_s$, $\vartheta$ and $\Dnul$.
The product rule and the previous estimates for $\mu_\bT$ \eqref{eq:par_T_est} 
then yield
\begin{align}\label{eq:curlET_HT}
\begin{aligned}
\norm[\Wsob{N,q}{\Dnul}]{\curl\widehat\bE_\bT}=\omega\norm[\Wsob{N,q}{\Dnul}]{\mu_\bT\widehat\bH_\bT}\leq C(\bT)\norm[\Wsob{N,q}{\Dnul}]{\widehat\bH_\bT},\\
\norm[\Lp{2}{\Dnul}]{\widehat\bH_\bT}=\omega^{-1}\norm[\Lp{2}{\Dnul}]{\mu_\bT^{-1}\curl\widehat\bE_\bT}\leq c(\bT)\norm[\Lp{2}{\Dnul}]{\curl\widehat\bE_\bT},
\end{aligned}
\end{align}
where the constants $C(\bT)>0$ and $c(\bT)>0$ depend on $\mu\in\Wsob{N,q}{\D_H;\IC^{3\times 3}}$ and, continuously,
on $\bT\in\frakT\Subset\bm\cC^{N,1}(\Dnul)$.
A combination of the estimates in \eqref{eq:regularity_T},
\eqref{eq:curlET_HT} and \eqref{eq:apriori_T_nom} then yields
\begin{align}
&\norm[\Wsob{N,q}{\Dnul}]{\widehat\bE_\bT}+\norm[\Wsob{N,q}{\Dnul}]{\curl\widehat\bE_\bT}\\
&\le (1+C(\bT))\left(\norm[\Wsob{N,q}{\Dnul}]{\widehat\bE_\bT}+\norm[\Wsob{N,q}{\Dnul}]{\widehat\bH_\bT}\right)\\
&\le C(1+C(\bT))\left(\norm[\Lp{2}{\Dnul}]{\widehat\bE_\bT} + \norm[\Lp{2}{\Dnul}]{\widehat\bH_\bT} +\norm[\Wsob{N-1,q}{\div ;\Dnul}]{\bJ_\bT}\right)\\
&\le C(1+c(\bT))(1+C(\bT))\left(\norm[\Lp{2}{\Dnul}]{\widehat\bE_\bT} + \norm[\Lp{2}{\Dnul}]{\curl\widehat\bE_\bT} +\norm[\Wsob{N-1,q}{\div ;\Dnul}]{\bJ_\bT}\right)\\
&\leq C(1+c(\bT))(1+C(\bT))(1+C_\bJ(\bT))\frac{C_\vartheta}{\alpha}\norm[\Wsob{N-1,q}{\div ;\D_H}]{\bJ},\label{eq:fin_reg_est}
\end{align}
where the constant $C>0$ is as in \eqref{eq:regularity_T}, $C(\bT)>0$
and $c(\bT)>0$ are as in \eqref{eq:curlET_HT}, $C_\vartheta>0$ and
$\alpha>0$ are as in \eqref{eq:apriori_T_nom}, with the dependence on
$\omega>0$ has been absorbed by the constant $C$.
$C_\bJ(\bT)>0$ follows from combining the estimates for $\bJ_\bT$
and $\div\bJ_\bT$ in \eqref{eq:par_T_est} and \eqref{eq:mu_J_divJ_est}.
The compactness of $\frakT$ in $\bm\cC^{N,1}(\Dnul)$ together with the continuous
dependence of the constants on $\bT\in\bm\cC^{N,1}(\Dnul)$ then ensures a
uniform bound on $\bT\in\frakT$ in \eqref{eq:fin_reg_est}.
\end{proof}

Theorem \ref{thm:unif_reg} will ensure a uniform bound on the
convergence rates of finite element approximations of the fields
$\widehat\bE_\bT$ and will allow for the design of multilevel algorithms
in the approximation of the expectation of the mapping
$\bT\mapsto\widehat\bE_{\bT}$.
We continue our analysis by studying the approximation of solutions to
Problem \ref{prob:weak_nom_T} by the finite element method.

\section{Discrete solution}
\label{sec:DiscSol}
To compute a discrete finite element approximation to
the solution of Problem \ref{prob:weak_nom_T}, the test
and trial space $\hncurlbf{\D}$ in \eqref{eq:weak_T_nom}
is to be replaced with a finite dimensional subspace. Since the PDE coefficients defining
$\ant{\cdot}{\cdot}$ and $\fnt{\cdot}$ in Problem
\ref{prob:weak_nom_T} are not constant, the corresponding
stiffness matrix must itself be approximated by means of
numerical quadrature on each element of the mesh, which
introduces a further source of error. Following
\cite{AJSZ20,AJ_2020}, in this section we discuss
existence, uniqueness and the approximation properties
of such a discrete solution. However, first we provide a framework 
to accommodate non-polyhedral domains.

\subsection{Pullback to a polyhedral domain}
The regularity result in Theorem \ref{thm:unif_reg} requires the
domain $\Dnul\subset\IR^3$ to possess a $\cC^{N,1}$-boundary for
some $N\in\IN$, which precludes polyhedral domains and the usage
of standard tetrahedral meshes. We circumvent this problem by
  pulling back the respective Maxwell problems to a polyhedral domain,
  henceforth referred to as the \emph{computational domain}, satisfying
  the following assumption.

  \begin{assumption}
    \label{ass:computational_domain}
    Let $\Dcomp\subset\IR^3$ be a polyhedral domain, 
    referred to as the \emph{computational domain}.
    There exists a bijective bi-Lipschitz map $\widehat\bT$ mapping $\Dcomp$ onto
    $\Dnul$. 
    For $n\in\IN$, there are two sets of
    pairwise disjoint subsets of $\Dcomp$ and $\Dnul$,
    $\{\Dcomp_j\}_{j=1}^{n}$ and $\{\Dnul_j\}_{j=1}^{n}$,
    respectively, such that the domains $\{\Dcomp_j\}_{j=1}^{n}$ are
    polyhedral, the domains $\{\Dnul_j\}_{j=1}^{n}$ are
    Lipschitz, and it holds that
    \begin{gather}
      \Dnul=\mbox{int}\left(\bigcup_{j=1}^n\overline{\Dnul_j}\right),\qquad
      \Dcomp=\mbox{int}\left(\bigcup_{j=1}^n\overline{\Dcomp_j}\right),\\
      \widehat\bT|_{\Dcomp_j}:\Dcomp_j\to\Dnul_j,\quad
      \widehat\bT|_{\Dcomp_j}\in \bm\cC^{N,1}(\Dcomp_j),\quad
      \widehat\bT^{-1}|_{\Dnul_j}\in
      \bm\cC^{N,1}(\Dnul_j) \quad\forall
      j\in\{1,\hdots,n\},
    \end{gather}
    where $N\in\IN$ is as in Assumption \ref{ass:material_trafo_smooth}.
  \end{assumption}    
    For each $\bT\in\frakT$, we introduce the mapping
    \begin{align}
      \widetilde\bT:=\bT\circ\widehat\bT:\Dcomp\to\D_\bT,
    \end{align}
    and the set of admissible computational perturbations
    $\widetilde\frakT:=\set{\widetilde\bT}{\widetilde\bT:=\bT\circ\widehat\bT\;\forall\,\bT\in\frakT}$. Figure \ref{fig:setting} illustrates the setting of Assumption \ref{ass:computational_domain}.

\begin{definition}\label{def:pw}
  Let $\Dcomp$ be as in Assumption \ref{ass:computational_domain}.  For
  $m\in\N$ and $p\in [1,\infty]$, we introduce
  \begin{equation*}
    \Wsobpw{m,p}{\Dcomp}:=\set{\bU\in
      \Lp{p}{\Dcomp}}{\bU|_{\Dcomp_j}\in \Wsob{m,q}{\Dcomp_j}~\forall\, j\, \in\{1,\hdots,n\}}
  \end{equation*}
  with
  \begin{equation*}
    \norm[\Wsobpw{m,p}{\Dcomp}]{\bU}:=\left(\sum_{i=j}^n
      \norm[\Wsob{m,p}{\Dcomp_j}]{\bU|_{\Dcomp_j}}^p
    \right)^\frac{1}{p},
  \end{equation*}
  if $p<\infty$ and the usual adjustment in case $p=\infty$.
\end{definition}

\begin{definition}\label{def:pw_curl}
Let $\Dcomp$ be as in Assumption \ref{ass:computational_domain}. For
$m\in\N$ and $p\in [1,\infty]$, we introduce
\begin{equation*}
\Wsobpw{m,p}{\curl;\Dcomp}:=
\set{\bU\in\Wsobpw{m,p}{\Dcomp}}{\curl\bU\in \Wsobpw{m,p}{\Dcomp}},
\end{equation*}
with
\begin{equation*}
\norm[\Wsobpw{m,p}{\curl;\Dcomp}]{\bU}:=\left(\norm[\Wsobpw{m,p}{\Dcomp}]{\bU}^p+\norm[\Wsobpw{m,p}{\Dcomp}]{\curl\bU}^p\right)^\frac{1}{p},
\end{equation*}
if $p<\infty$ and the usual adjustment in case $p=\infty$. 
Furthermore, 
for any $m\in\IN$, we set
  \begin{align}
  \hscurlbfpw{m}{\Dcomp}:=\Wsobpw{m,2}{\curl;\Dcomp}.
  \end{align}
\end{definition}

\begin{remark}\label{rmk:comp}
    Under Assumptions \ref{ass:material_trafo_smooth} and
    \ref{ass:computational_domain}, compactness of
      $\frakT \Subset \bm\cC^{N,1}(\Dnul)$ and continuity of
      $\bT\mapsto \bT\circ\widehat\bT:\bm\cC^{N,1}(\Dnul)\to
      \Wsobpw{N+1,\infty}{\Dcomp}$ imply the compactness of $\widetilde\frakT=
      \set{\bT\circ\widehat\bT}{\bT\in\frakT}\subseteq
      \Wsobpw{N+1,\infty}{\Dcomp}$.
\end{remark}
\begin{figure}
  \begin{center}
    \begin{tikzpicture}[scale=1.2]
      \draw[thick] (-2.5,-1) -- (-0.5,-1) -- (-0.5,1) -- (-2.5,1) --
      (-2.5,-1); \draw (-2.5,-1) -- (-0.5,1); \draw (-2.5,1) --
      (-0.5,-1); \node at (-2.8,1) {\small $\Dcomp$};

      \node at (-1.5+0.4,0+0){\small $\Dcomp_1$}; \node at
      (-1.5+0,0+0.4){\small $\Dcomp_2$}; \node at (-1.5-0.4,0+0){\small
        $\Dcomp_3$}; \node at (-1.5+0,0-0.4){\small $\Dcomp_4$};
  
      \draw[thick] (3,0) circle (1cm); \draw (3+0.707,0+0.707) --
      (3-0.707,0-0.707); \draw (3+0.707,0-0.707) -- (3-0.707,0+0.707);
      \node at (1.7,1) {\small $\Dnul$};
  
      \node at (3+0.4,0+0){\small $\Dnul_1$}; \node at
      (3+0,0+0.4){\small $\Dnul_2$}; \node at
      (3-0.4,0+0){\small $\Dnul_3$}; \node at
      (3+0,0-0.4){\small $\Dnul_4$};

      \draw[thick, xshift = 7.5cm] plot [smooth cycle, tension = 0.6]
      coordinates {(1,0) (0.954,0.3) (0.507,0.507) (0.3,0.954) (0,1)
        (-0.707,0.707) (-1,0) (-0.954,-0.3) (-0.507,-0.507)
        (-0.3,-0.954) (0,-1) (0.707,-0.707) }; \node at (6.2,1)
      {\small $\D_\bT$};

      \draw [-,xshift = 7.5cm,in=80,out=-120] (0.507,0+0.507) to (-0.507,0-0.507);
      \draw [xshift = 7.5cm,in=-30,out=120] (0.707,0-0.707) to (-0.707,0+0.707);

      \node at (7.5+0.55+0.05,0+0.08){\small $\bT(\Dnul_1)$}; \node at
      (7.5+0,0+0.55+0.05){\small $\bT(\Dnul_2)$}; \node at
      (7.5-0.55+0.05,0+0.15){\small $\bT(\Dnul_3)$}; \node at
      (7.5+0.05,0-0.55+0.05){\small $\bT(\Dnul_4)$};

      \draw [->,in=150,out=30] (0,0) to (1.5,0); \node at
      (0.75,0.5){\small $\widehat\bT$};

      \draw [->,in=150,out=30] (4.5,0) to (6,0); \node at
      (5.25,0.5){\small $\bT\in\frakT$};

      \draw [->,in=210,out=-30] (0.,-.5) to (6,-.5); \node at
      (3,-1.7){\small $\widetilde\bT\in\widetilde\frakT$};
    \end{tikzpicture}
  \end{center}
  \caption{ Setting for domain transformations. The domains
      $\Dnul$ and $\Dcomp$, as well as the transformation
      $\widehat\bT:\Dcomp\to\Dnul$ are considered fixed. For a family
      of domain transformations $\frakT$, the physical domains are
      given as $\D_\bT=\bT(\Dnul)$ for $\bT\in\frakT$, or
      equivalently as $\widetilde\bT(\Dcomp)$ for
      $\widetilde\bT=\bT\circ\widehat\bT\in\widetilde\frakT:=\frakT\circ\widehat\bT$.  The
      pullback solution on the smooth nominal domain $\Dnul$
      can be shown to belong to a certain regularity class. This
      allows to deduce convergence rates for finite element
      approximations of the pullback solutions computed on the
      polyhedral domain $\Dcomp$.}\label{fig:setting}
\end{figure}

\begin{lemma}\label{lemma:regT}
  Let Assumptions \ref{ass:computational_domain} hold and let
  $\bU\in\hncurlbf{\Dnul}\cap\Wsob{m,p}{\curl;\Dnul}$ for $m\in\IN$ with $m\leq N$, where
  $N\in\IN$ is as in Assumption \ref{ass:computational_domain}, and $p>1$.
  Then, with $\widehat\bT:\Dcomp\to\Dnul$ as in Assumption
  \ref{ass:computational_domain} and
  $\Phi_{\widehat\bT}:\hncurlbf{\Dnul}\to\hncurlbf{\Dcomp}$ as
  in \eqref{eq:pullback} and Lemma \ref{lemma:trafo}, it holds that
  $\Phi_{\widehat\bT}\bU$ belongs to
  $\hncurlbf{\Dcomp}\cap\Wsobpw{m,p}{\curl;\Dcomp}$, with
  \begin{align}\label{eq:regT_lem}
    \norm[\Wsobpw{m,p}{\curl;\Dcomp}]{\Phi_{\widehat\bT}\bU}\leq C\norm[\Wsob{m,p}{\curl;\Dnul}]{\bU},
  \end{align}
  where the constant $C>0$ is independent of
  $\bU\in\hncurlbf{\Dnul}\cap\Wsob{m,p}{\curl;\Dnul}$.
\end{lemma}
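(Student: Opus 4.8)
The plan is to reduce the piecewise estimate on $\Dcomp$ to a sum of local estimates on each polyhedral piece $\Dcomp_j$, where $\widehat\bT|_{\Dcomp_j}$ is a genuine $\bm\cC^{N,1}$-diffeomorphism onto $\Dnul_j$, and then invoke the transformation formulas of Lemma \ref{lemma:trafo} together with chain-rule bounds analogous to those used in the proof of Theorem \ref{thm:unif_reg}. First I would note that $\Phi_{\widehat\bT}\bU\in\hncurlbf{\Dcomp}$ follows immediately from Lemma \ref{lemma:trafo} applied with $\widehat\bT$ in place of $\bT$, since that lemma already gives the isomorphism $\Phi_{\widehat\bT}:\hncurlbf{\Dnul}\to\hncurlbf{\Dcomp}$. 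The content of the statement is therefore the piecewise Sobolev regularity and the norm bound \eqref{eq:regT_lem}. By Definition \ref{def:pw_curl} it suffices to control $\norm[\Wsob{m,p}{\Dcomp_j}]{\Phi_{\widehat\bT}\bU|_{\Dcomp_j}}$ and $\norm[\Wsob{m,p}{\Dcomp_j}]{\curl(\Phi_{\widehat\bT}\bU)|_{\Dcomp_j}}$ for each $j$, and then sum the $p$-th powers.

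The key computation is local on $\Dcomp_j$. There, $\Phi_{\widehat\bT}\bU=\d\!\widehat\bT^\top(\bU\circ\widehat\bT)$, and Lemma \ref{lemma:trafo} gives $\curl\Phi_{\widehat\bT}\bU=\det(\d\!\widehat\bT)\,\d\!\widehat\bT^{-1}(\curl\bU\circ\widehat\bT)$. Since $\widehat\bT|_{\Dcomp_j}\in\bm\cC^{N,1}(\Dcomp_j)$ with $m\leq N$, the entries of $\d\!\widehat\bT$, $\d\!\widehat\bT^{-1}$ and $\det(\d\!\widehat\bT)$ are $\bm\cC^{m-1,1}(\Dcomp_j)\subset W^{m,\infty}(\Dcomp_j)$ multipliers, bounded uniformly by the bi-Lipschitz and $\cC^{N,1}$ hypotheses of Assumption \ref{ass:computational_domain}. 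I would then apply the product rule to $\d\!\widehat\bT^\top(\bU\circ\widehat\bT)$, exactly as in \eqref{eq:par_T_est}, and the chain-rule estimate for composition with a $W^{m,\infty}$ diffeomorphism (the same Lemma~1 of \cite{PCPAR_1972_b} and Lemma~3 of \cite{PCPAR_1972} cited in the proof of Theorem \ref{thm:unif_reg}) to bound $\norm[\Wsob{m,p}{\Dcomp_j}]{\bU\circ\widehat\bT}$ by a constant times $\norm[\Wsob{m,p}{\Dnul_j}]{\bU}$, with the constant depending only on the $W^{m,\infty}$-norm of $\widehat\bT|_{\Dcomp_j}$ and on $\norm[\lp{\infty}{\Dnul_j}]{\det(\d\!\widehat\bT)^{-1}}^{1/p}$ from the Jacobian change of variables. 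The same reasoning applied to $\curl\bU\circ\widehat\bT$ controls the curl term. Summing over $j$ and recalling $\sum_j\norm[\Wsob{m,p}{\Dnul_j}]{\bU}^p\le\norm[\Wsob{m,p}{\Dnul}]{\bU}^p$ (the pieces $\Dnul_j$ being disjoint with union $\Dnul$) yields \eqref{eq:regT_lem}.

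The main obstacle, and the one point requiring care rather than a verbatim copy of the earlier argument, is that $\widehat\bT$ is only \emph{piecewise} smooth and globally bi-Lipschitz: it is \emph{not} in $\bm\cC^{N,1}(\Dnul)$ across the interfaces between the $\Dnul_j$. Consequently $\Phi_{\widehat\bT}\bU$ need not lie in $\Wsob{m,p}{\curl;\Dcomp}$ globally, which is precisely why the conclusion is phrased in the piecewise space $\Wsobpw{m,p}{\curl;\Dcomp}$ of Definition \ref{def:pw_curl}. I would therefore be explicit that all regularity assertions are made piece-by-piece on $\Dcomp_j$, with no interface matching of normal or tangential traces required beyond the global $\hcurlbf{\Dcomp}$-membership already furnished by Lemma \ref{lemma:trafo}. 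A secondary, purely bookkeeping point is that the chain-rule constants depend on $\widehat\bT|_{\Dcomp_j}$ only through its $W^{m,\infty}$-norm and the lower bound on $\det(\d\!\widehat\bT)$, both of which are fixed finite quantities determined by Assumption \ref{ass:computational_domain}; since $\widehat\bT$ is a single fixed map (not a member of the varying family $\frakT$), the resulting constant $C$ is automatically independent of $\bU$, as claimed.
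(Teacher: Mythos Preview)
Your proposal is correct and follows essentially the same route as the paper's proof: both invoke Lemma \ref{lemma:trafo} for the global $\hncurlbf{\Dcomp}$-membership and the transformation formulas, then work piece-by-piece on each $\Dcomp_j$ using the chain-rule bounds of \cite{PCPAR_1972_b,PCPAR_1972} for $\bU\circ\widehat\bT$ and $\curl\bU\circ\widehat\bT$, combined with the product rule and the $W^{m,\infty}$-regularity of $\d\!\widehat\bT$ and its cofactor matrix on each piece. Your additional remarks on why the conclusion must be stated in the piecewise space and on the independence of $C$ from $\bU$ are accurate and make explicit what the paper leaves implicit.
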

\begin{proof}
  Take an arbitrary
  $\bU\in\hncurlbf{\Dnul}\cap\Wsob{m,p}{\curl;\Dnul}$. Lemma
  \ref{lemma:trafo} gives,
  \begin{align*}
    \Phi_{\widehat\bT}\bU=\d\!\widehat\bT^\top\bU\circ\widehat\bT\quad\mbox{and}\quad \curl\Phi_{\widehat\bT}\bU=\det(\d\!\widehat\bT)\d\!\widehat\bT^{-1}\curl\bU\circ\widehat\bT,
  \end{align*}
  and $\Phi_{\widehat\bT}\bU\in\hncurlbf{\Dcomp}$.
  Fix $j\in\{1,\dots,n\}$. 
By Assumption \ref{ass:computational_domain},
  $\widehat\bT\in\Wsob{N+1,\infty}{\Dcomp_j}$,
  according to \cite[Lemma 1]{PCPAR_1972_b}---also see \cite[Lemma 3]{PCPAR_1972}---it holds that
  $\bU|_{\Dnul_j}\circ \widehat\bT,\ \curl\bU|_{\Dnul_j}\circ \widehat\bT\in \Wsob{m,p}{\Dcomp_j}$.
  Repeatedly applying the chain rule---as in the proof of Theorem
  \ref{thm:unif_reg}---yields the existence of $C>0$, 
  independent of $\bU\in\hncurlbf{\Dnul}$, 
  such that
  \begin{align}
    \norm[\Wsob{m,p}{\Dcomp_j}]{\bU\circ\widehat\bT}
    &\le C \left(1+\norm[\Wsobpw{m,\infty}{\Dcomp}]{\widehat\bT}\right)^m
      \norm[\lp{\infty}{\Dcomp}]{\det(\d\!\widehat\bT)^{-1}}^\frac{1}{p}
      \norm[\Wsob{m,p}{\Dnul_j}]{\bU},\\
    \norm[\Wsob{m,p}{\Dcomp_j}]{\curl\bU\circ \widehat\bT}
    &\le C \left(1+\norm[\Wsobpw{m,\infty}{\Dcomp}]{\widehat\bT}\right)^m
      \norm[L^\infty(\Dcomp)]{\det(\d\!\widehat\bT)^{-1}}^\frac{1}{p}
      \norm[\Wsob{m,p}{\Dnul_j}]{\curl\bU}.
  \end{align}
  Furthermore
  $\d\!\widehat\bT\in \Wsob{N,\infty}{\Dcomp_j;\C^{3\times 3}}$ and
  therefore $\Phi_{\widehat\bT}\bU\vert_{\Dcomp_j}\in\Wsob{m,p}{\Dcomp_j}$ for all
  $j\in\{1,\dots,n\}$. Analogously, we have that
  $\det(\d\!\widehat\bT)\d\!\widehat\bT^{-1}\in\Wsob{N,\infty}{\widetilde \D_j;\C^{3\times
      3}}$ (upon recalling
  that $\det(\d\!\widehat\bT)\d\!\widehat\bT^{-1}$ is the cofactor
  matrix of $\d\!\widehat\bT$) and
  therefore $\curl\Phi_{\widehat\bT}\bU\vert_{\Dcomp_j}\in\Wsob{m,p}{\Dcomp_j}$ for all
  $j\in\{1,\dots,n\}$. The estimate \eqref{eq:regT_lem} then follows by the
  product rule.
\end{proof}

\begin{problem}[Computational Maxwell cavity problem]
\label{prob:weak_comp_T}
For each $\bT\in\frakT$, we seek
$\widetilde\bE_{\bT}$ $\in \hncurlbf{\Dcomp}$ such that with
\begin{align}
\label{eq:ant_fnt_comp_def}
\begin{aligned}
\act{\widetilde\bU}{\widetilde\bV}&:=\int_{\Dcomp}\left[
\mu_{\widetilde\bT}^{-1}\curl{\widetilde\bU}
\cdot \curl{\overline{{\widetilde\bV}}}
-\omega^2\eps_{\widetilde\bT}{\widetilde\bU}\cdot
\overline{{\widetilde\bV}} \right]\d\!\widetilde{\bx}\\
\fct{{\widetilde\bV}}&:=-\ii\omega\int_{\Dcomp}
\bJ_{\widetilde\bT}\cdot
\overline{{\widetilde\bV}}\d\!\widetilde{\bx},
\end{aligned}
\end{align}
for all $\widetilde\bU$, $\widetilde\bV\in\hncurlbf{\Dcomp}$,
it holds that
\begin{align}
\label{eq:weak_T_copm}
\act{\widetilde\bE_\bT}{\widetilde\bV}=\fct{{\widetilde\bV}}
\quad\forall\ \widetilde\bV\in\hncurlbf{\Dcomp},
\end{align}
where $\widetilde\bT:=\bT\circ\widehat\bT$, $\widehat\bT:\Dcomp\to\Dnul$ is as in Assumption \ref{ass:computational_domain} and $\mu_{\widetilde\bT}$,
$\eps_{\widetilde\bT}$ and $\bJ_{\widetilde\bT}$ are as in Remark \ref{rmk:equiv}.
\end{problem}

\begin{theorem}\label{thm:unif_pw_reg}
  Let Assumptions \ref{ass:material_trafo_smooth}
  and \ref{ass:computational_domain} hold. Then, for each
  $\bT\in\frakT$, there is a unique solution $\widetilde\bE_\bT\in\hncurlbf{\Dcomp}$ to
  Problem \ref{prob:weak_comp_T} that satisfies
  $\widetilde\bE_\bT\in\hncurlbf{\Dcomp}\cap\Wsobpw{N,q}{\Dcomp}$, where $N\in\IN$ and
  $q\geq 3$ are as in Assumption \ref{ass:material_trafo_smooth}, with the bound
  \begin{align}\label{eq:hnq_curl_ET}
    \norm[\Wsobpw{N,q}{\curl;\Dcomp}]{\widetilde\bE_\bT}\le C
    \norm[\Wsob{N-1,q}{\div;\D_H}]{\bJ},
  \end{align}
  where the constant $C>0$ depends on $\frakT$ but is independent of
  $\bT\in\frakT$.
\end{theorem}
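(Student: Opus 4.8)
The plan is to reduce the computational problem on $\Dcomp$ to the nominal problem on $\Dnul$ via the curl-conforming pullback $\Phi_{\widehat\bT}$, and then to combine Theorem \ref{thm:unif_reg} with Lemma \ref{lemma:regT}. The whole argument rests on a single structural observation: Problem \ref{prob:weak_comp_T} is precisely the pullback under $\widehat\bT$ of Problem \ref{prob:weak_nom_T}.

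First I would establish this equivalence at the level of coefficients. Writing $\widetilde\bT=\bT\circ\widehat\bT$ and using the chain rule $\d\!\widetilde\bT=(\d\!\bT\circ\widehat\bT)\,\d\!\widehat\bT$ together with the multiplicativity of the determinant, one verifies directly from the cofactor form \eqref{eq:param_p} that the computational coefficients coincide with the $\widehat\bT$-pullbacks of the nominal ones, namely $\mu_{\widetilde\bT}=\det(\d\!\widehat\bT)\,\d\!\widehat\bT^{-1}(\mu_\bT\circ\widehat\bT)\,\d\!\widehat\bT^{-\top}$, and analogously for $\eps_{\widetilde\bT}$ and $\bJ_{\widetilde\bT}$. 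Consequently, by the same change-of-variables argument that links Problems \ref{prob:weak_T} and \ref{prob:weak_nom_T} (cf.~\cite{AJSZ20,Jerez-Hanckes:2017aa}), a function $\widehat\bE_\bT\in\hncurlbf{\Dnul}$ solves Problem \ref{prob:weak_nom_T} if and only if $\widetilde\bE_\bT:=\Phi_{\widehat\bT}(\widehat\bE_\bT)\in\hncurlbf{\Dcomp}$ solves Problem \ref{prob:weak_comp_T}.

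Existence and uniqueness of $\widetilde\bE_\bT$ then follow at once. Theorem \ref{thm:cont} furnishes the unique nominal solution $\widehat\bE_\bT$, and since $\widehat\bT$ is bi-Lipschitz the map $\Phi_{\widehat\bT}:\hncurlbf{\Dnul}\to\hncurlbf{\Dcomp}$ is an isomorphism (Lemma \ref{lemma:trafo} in the purely Lipschitz regime, cf.~Remark \ref{rmk:lip_assumption}); it therefore transports the unique nominal solution to a unique computational one. For regularity and the stated bound I would apply Theorem \ref{thm:unif_reg} to obtain $\widehat\bE_\bT\in\Wsob{N,q}{\curl;\Dnul}$ with $\norm[\Wsob{N,q}{\curl;\Dnul}]{\widehat\bE_\bT}\le C_2\norm[\Wsob{N-1,q}{\div;\D_H}]{\bJ}$, and then Lemma \ref{lemma:regT} with $m=N$ and $p=q>1$ to conclude that $\widetilde\bE_\bT=\Phi_{\widehat\bT}(\widehat\bE_\bT)$ lies in $\hncurlbf{\Dcomp}\cap\Wsobpw{N,q}{\curl;\Dcomp}$, with
\begin{align}
\norm[\Wsobpw{N,q}{\curl;\Dcomp}]{\widetilde\bE_\bT}\le C_1\norm[\Wsob{N,q}{\curl;\Dnul}]{\widehat\bE_\bT}\le C_1 C_2\norm[\Wsob{N-1,q}{\div;\D_H}]{\bJ}.
\end{align}

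The constant $C_1$ from Lemma \ref{lemma:regT} depends only on the fixed map $\widehat\bT$ (through $\norm[\Wsobpw{N,\infty}{\Dcomp}]{\widehat\bT}$ and $\norm[\lp{\infty}{\Dcomp}]{\det(\d\!\widehat\bT)^{-1}}$), while $C_2$ from Theorem \ref{thm:unif_reg} depends on $\frakT$ but not on the individual $\bT$; hence the product $C_1 C_2$ is independent of $\bT\in\frakT$, which is exactly the asserted uniformity. The only genuinely delicate point is the coefficient identity of the first step, i.e.~checking that the two-stage pullback through $\widehat\bT$ and then $\bT$ agrees with the single pullback through $\widetilde\bT$; this must respect the cofactor structure of the curl-conforming transformation, and once it is confirmed the remainder is a bookkeeping combination of results already established.
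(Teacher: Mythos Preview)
Your proposal is correct and follows essentially the same route as the paper: establish the equivalence $\widetilde\bE_\bT=\Phi_{\widehat\bT}\widehat\bE_\bT$ between Problems \ref{prob:weak_comp_T} and \ref{prob:weak_nom_T}, invoke Theorem \ref{thm:unif_reg} for the regularity of $\widehat\bE_\bT$ on $\Dnul$, and then apply Lemma \ref{lemma:regT} to transport this to piecewise regularity on $\Dcomp$. The paper is somewhat terser about the coefficient identity you single out, simply noting that the analysis of Sections \ref{ssec:pert_dom}--\ref{ssec:regularity} can be repeated on $\Dcomp$ and citing \cite{AJSZ20,Jerez-Hanckes:2017aa} for the equivalence, but the logical structure is identical.
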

\begin{proof}
  Under our assumptions, we may repeat our
  analysis in Sections \ref{ssec:pert_dom} through
  \ref{ssec:regularity} on $\Dcomp$ instead of on $\Dnul$, so
  that for each $\widetilde\bT\in\widetilde\frakT$ there is a unique
  $\widetilde\bE_{\bT}\in\hncurlbf{\Dcomp}$ that solves Problem \ref{prob:weak_comp_T}.
  Moreover, as before, it holds that $\widetilde\bE_{\bT}\equiv\Phi_{\widehat\bT}\widehat\bE_\bT$,
  where $\widehat\bT:\Dcomp\to\Dnul$ is as in Assumption \ref{ass:computational_domain},
  $\Phi_{\widehat\bT}:\hncurlbf{\Dnul}\to\hncurlbf{\Dcomp}$ is as in Lemma \ref{lemma:trafo}
  and $\widehat\bE_\bT\in\hncurlbf{\Dnul}$ is the solution of Problem \ref{prob:weak_nom_T}
  (\emph{cf.}~\cite{AJSZ20,Jerez-Hanckes:2017aa} for more details).
  Then, Theorem \ref{thm:unif_reg} yields
  $\widehat\bE_{\bT}\in\Wsob{N,q}{\curl;\Dnul}$ with
  the bound
  \begin{align}\label{eq:regularitycurl_intermediate}
    \norm[\Wsob{N,q}{\curl;\Dnul}]{\widehat\bE_{\bT}}\le C
    \norm[\Wsob{N-1,q}{\div;\D_H}]{\bJ},
  \end{align}
  where the positive constant $C$ depends on $\widetilde\frakT$ but is
  independent of $\widetilde\bT\in\widetilde\frakT$. Lemma
  \ref{lemma:regT} then yields a positive
  constant $C$ such that, 
  \begin{align}
    \norm[\Wsobpw{N,q}{\curl;\Dcomp}]{\Phi_{\widehat\bT}\widehat\bE_\bT}\leq C
    \norm[\Wsob{N,q}{\curl;\Dnul}]{\widehat\bE_{\bT}},
  \end{align}
  for each $\bT\in\frakT$, so that the result then follows from the equivalence $\widetilde\bE_{\bT}\equiv\Phi_{\widehat\bT}\widehat\bE_\bT$.
\end{proof}

\subsection{Finite elements}
\label{ssec:fe}
We now introduce discretization spaces for 
Problem \ref{prob:weak_comp_T}. 
We shall consider a sequence
of affine meshes $\{\tau_{h_i}\}_{i\in\IN}$, indexed by their
positive mesh-sizes, on the computational domain $\Dcomp$.

\begin{assumption}
\label{ass:tau}
Let $\Dcomp$ be as in Assumption \ref{ass:computational_domain}.
There exist constants $s\in (0,1)$, $C_1>0$, $C_2>0$ and a sequence of meshes
  $\{\tau_{h_i}\}_{i\in\IN}$ such that for all $i\in\IN$ the following
  conditions hold:
\begin{enumerate}
\item $\tau_{h_i}$ is a set of pairwise disjoint tetrahedrons
  generally denoted $K$ such that
\begin{align}
\Dcomp=\mbox{int}\left(\bigcup_{K\in\tau_{h_i}}\overline{K}\right),
\end{align}
\item\label{item:tau:partition} there exists a partition of
$\tau_{h_i}=\bigcup_{j=1}^n \tau_{h_i,j}$ such that,
\begin{align}
\Dcomp_j=\mbox{int}\left(\bigcup_{K\in\tau_{h_i,j}}\overline{K}\right),
\end{align}
for all $j\in\{1,\dots,n\}$,
\item $\tau_{h_i}$ is a shape-regular and quasi-uniform mesh
(\emph{cf.}~\cite[Chap.~1]{ern2004theory}),
\item\label{item:tau:sizes} %
\begin{align}\label{eq:tauexp}
{C}_{1} s^{i} \leq h_i\leq {C}_2 s^{i}.
\end{align}%
\end{enumerate}
\end{assumption}

We denote an arbitrary mesh on the sequence
$\{\tau_{h_i}\}_{i\in\IN}$ as $\tau_h$. Note that
condition \eqref{item:tau:sizes} in Assumption \ref{ass:tau}
implies $\lim_{i\to\infty}h_i=0$.

\begin{remark}
\label{rmk:tauexp}
Equation \eqref{eq:tauexp} ensures that the cardinality
$|\tau_{h_i}|$ of the mesh $\tau_{h_i}$ increases. Specifically,
it holds that
\begin{align}
\label{eq:dof_s}
C_{s,3} s^{-3i} \leq {\rm dim} (\bm{P}^c_k(\tau_{h_i}))
\leq C_{s,4} s^{-3i},
\end{align} 
for a second pair of positive constants $C_{s,3}$ and $C_{s,4}$.
This will be relevant for the multilevel results presented
in Section \ref{sec:mlapp}.
\end{remark}

In the following, we assume given a reference tetrahedron
$\rK\subset \R^3$ such that for every $K\in\tau_{h}$ there is an affine
bijective map $\bmT_K:\rK\mapsto K$. For an arbitrary tetrahedron $K$,
we shall make use of the following space of polynomial functions of
degree $k\in\IN$,
\begin{align}\label{eq:poly_spaces_K}
\begin{gathered}
\bm{P}_{k}^{c}({K}) := \bbP_{k-1}\left(K ; \mathbb{C}^{3}\right)\oplus
\set{\bm{p} \in \widetilde{\mathbb{P}}_{k}\left(K, \mathbb{C}^{3}\right)}
{\bx \cdot \bm{p}(\bx )=0\quad\forall\;\bx\in K}.%
\end{gathered}
\end{align}
The curl-conforming edge finite element (FE) on a tetrahedron $K$
is given by the triple $$(K, \bm{P}_{k}^{c}({K}), \Sigma^c_k(K)),$$
where $\Sigma^c_K$ is a set of uni-solvent linear functionals over
$\bm{P}_{k}^{c}({K})$ (\emph{cf.}~\cite[Sec.~5.5]{Monk:2003aa}).
The curl-conforming FE space\textemdash satisfying the PEC boundary
condition \eqref{eq:bc}\textemdash on an affine mesh
$\tau_h\in\{\tau_{h_i}\}_{i\in\IN}$ is then built as follows
\begin{align*}
\bm P_k^c(\tau_h):= \set{\bV\in\hncurlbf{\Dcomp}}{\bV\vert_K \in
\bm{P}_{k}^{c}(K)\quad \forall K\in\tau_h}.
\end{align*}
For the sake of brevity, we avoid specifying all properties satisfied
by the mappings $\bmT_K$ as well as those satisfied by the space
$\bm P_k^c(\tau_h)$ (see \cite{ern2004theory} and \cite{Monk:2003aa}
and references therein).

\subsection{Discrete problem, Quadrature error and Strang's Lemma}
We continue by stating the fully discrete version of Problem
\ref{prob:weak_nom_T} and briefly comment on the conditions
required of the quadrature rules used to approximate the
integrals defining $\act{\cdot}{\cdot}$ and $\fct{\cdot}$
in Problem \ref{prob:weak_comp_T} to ensure convergence
rates of the solution to the fully discrete problem. For a
more detailed analysis we refer to \cite{AJSZ20,AJ_2020}.
\subsubsection{Numerical quadrature}
On the fixed reference tetrahedron $\breve{K}$, we define
a quadrature rule $Q:\cC^0(\breve K;\C)\to\C$ as
\begin{equation}\label{eq:Q}
Q(f):=\sum_{l=1}^L \breve w_l f(\breve{\bm{b}}_l),
\end{equation}
for certain quadrature nodes
$(\breve{\bm{b}}_l)_{l=1}^{L}\subseteq \breve K$ and quadrature
weights $(\breve{w}_l)_{l=1}^L\subseteq \R\backslash\{0\}$. Given a
(nondegenerate) tetrahedron $K$ and the affine bijective element map
$\bmT_K:\breve K\to K$ we obtain a transformed quadrature rule
$Q_K:\cC(K;\C)\to\C$ on $K$ via
\begin{equation}\label{eq:QK}
Q_K(f):=\sum_{l=1}^L w_{l,K} f(\bm{b}_{l,K})\qquad\text{where}\qquad
w_{l,K}:=\modulo{\det(\d\!\bmT_K)}\breve{w}_{l},\quad
{\bm{b}}_{l,K}:=\bmT_K(\breve{\bm{b}}_l).
\end{equation}

\subsubsection{Discrete variational formulation}
Approximating all the integrals in Problem \ref{prob:weak_comp_T}
with quad\-ra\-tures $Q^\bullet_K$ as in \eqref{eq:QK}---on each element $K$
of the mesh $\tau_h$---leads to the following sesquilinear and
antilinear forms:
\begin{align}
\label{eq:aht}
\begin{aligned}
\aht{\widetilde\bU_h}{\widetilde\bV_h}:=&\sum_{K\in\tau_h}Q^1_{K}
\left(\mu_{\widetilde\bT}^{-1}\curl{\widetilde\bU_h}\cdot 
\curl{\overline{{\widetilde\bV_h}}}\right)-\omega^2Q^2_K\left({\eps_{\widetilde\bT}}{\widetilde\bU_h}\cdot
\overline{{\widetilde\bV_h}}\right),
\end{aligned}
\end{align}
and
\begin{align}
\label{eq:fht}
\fht{\widetilde\bV_h}:=-\ii\omega\sum_{K\in\tau_h}
Q^2_K\left({\bJ_{\widetilde\bT}}\cdot\overline{{\widetilde\bV_h}}\right),
\end{align}
for all $\widetilde\bU_h$, $\widetilde\bV_h\in \bm P_k^c(\tau_h)$,
where we have used the same notation as in the statement of
Problem \ref{prob:weak_comp_T},
$Q_K^1$ and $Q_K^2$ are two different
quadrature rules on each $K\in\tau_h$, constructed from
two different quadrature rules $Q^1$ and $Q^2$ over $\rK$
as indicated in equation \eqref{eq:QK}. Since the quadrature rules
require pointwise function evaluations to be well-defined, here
$\mu^{-1}:\D_H\to\C^{3\times 3}$, $\eps:\D_H\to\C^{3\times 3}$ and
$\bJ:\D_H\to\C^3$ are required to be continuous in each element $K\in\tau_h$.
Function evaluations on the boundary of an element
$K$ are understood with respect to the interior limit on the
element $K$. With the previous definitions at hand, we arrive
at the fully discrete variational problem. 

\begin{problem}[Fully discrete computational Maxwell cavity problem]
\label{prob:disc_var_ref}
For each $\bT\in\frakT$, we seek $\widetilde\bE_{\bT,h}\in\bm{{P}}^c_k(\tau_h)$ such that
\begin{align}\label{eq:weakh}
\aht{\widetilde\bE_{\bT,h}}{\widetilde\bV_h}=\fht{\widetilde\bV_h}\qquad
\forall\;\widetilde\bV_h\in\bm{{P}}^c_k(\tau_h).
\end{align}
\end{problem}

\begin{theorem}\label{thm:disc}
Let Assumptions \ref{ass:material_trafo_smooth}, \ref{ass:computational_domain} and \ref{ass:tau}
hold and assume that the weights of the quadratures $Q^1$, $Q^2$ are
positive and at least one of the following two conditions:
\begin{enumerate}
\item The nodes defining $Q^1$ and $Q^2$ are
$\mathbb{P}_{k-1}(\breve{K};\IC)$ and
$\mathbb{P}_{k}(\breve{K};\IC)$-unisolvent, respectively.
\item $Q^1$ and $Q^2$ are exact on $\bbP_{2k-2}(\breve K;\IC)$
and $\bbP_{2k}(\breve K;\IC)$, respectively.
\end{enumerate}
Then, there exists a
unique solution $\widetilde\bE_{\bT,h}\in \bm{P}^c_k(\tau_h)$ of
Problem \ref{prob:disc_var_ref} and it holds that
\begin{align}\label{eq:aprioriEh}
\norm[\hcurlbf{\Dcomp}]{\widetilde\bE_{\bT,h}}\le C\frac{\omega}{\alpha}
\norm[\Lp{2}{\D_{\bT}}]{\bJ},
\end{align}
where $\alpha>0$ is as in \eqref{eq:alpha_holdall_smooth} and
the constant $C>0$ is independent of the
mesh-size and of $\bT\in\frakT$, but depends on $\vartheta$
in \eqref{eq:vartheta_smooth}.
\end{theorem}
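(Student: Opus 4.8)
The plan is to show that the quadrature-perturbed sesquilinear form $\aht{\cdot}{\cdot}$ is uniformly coercive and continuous on the finite element space $\bm{P}^c_k(\tau_h)$, and then to invoke the complex Lax--Milgram lemma on this finite-dimensional space to obtain well-posedness together with the a priori bound. Throughout, the two main tasks are transferring the pointwise positivity of the pulled-back coefficients through the quadrature sums, and controlling the resulting discrete seminorms by $\norm[\hcurlbf{\Dcomp}]{\cdot}$ uniformly in both the mesh-size and $\bT$.

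First I would record the pointwise coercivity of the transformed coefficients, exactly as in the proof of Theorem \ref{thm:cont}. Writing $\mu_{\widetilde\bT}^{-1}=\det(\d\!\widetilde\bT)^{-1}\d\!\widetilde\bT^\top(\mu^{-1}\circ\widetilde\bT)\d\!\widetilde\bT$ and using item (\ref{it:Assum_1_ellip_1}) of Assumption \ref{ass:material_trafo_smooth} with the bounds \eqref{eq:vartheta_smooth}, one obtains, uniformly in $\bT\in\frakT$ and for every $\bzeta\in\IC^3$,
\[
\Re(e^{\ii\theta}\bzeta^\top\mu_{\widetilde\bT}^{-1}\bar\bzeta)\ge c\vartheta^3\alpha\norm[\IC^3]{\bzeta}^2,\qquad -\omega^2\Re(e^{\ii\theta}\bzeta^\top\eps_{\widetilde\bT}\bar\bzeta)\ge c\vartheta^3\alpha\norm[\IC^3]{\bzeta}^2.
\]
Since $q>3$, the Morrey embedding makes $\mu_{\widetilde\bT}^{-1}$ and $\eps_{\widetilde\bT}$ continuous on each $\Dcomp_j$, so these bounds hold at every quadrature node $\bm{b}_{l,K}$.

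The central step is coercivity. Setting $\widetilde\bV_h=\widetilde\bU_h$ and using that the weights are positive, the pointwise bounds push through the quadrature sums to yield
\[
\Re(e^{\ii\theta}\aht{\widetilde\bU_h}{\widetilde\bU_h})\ge c\vartheta^3\alpha\sum_{K\in\tau_h}\left(Q^1_K(\norm[\IC^3]{\curl\widetilde\bU_h}^2)+Q^2_K(\norm[\IC^3]{\widetilde\bU_h}^2)\right),
\]
both contributions entering with the correct sign. It remains to bound these sums below by $\norm[\hcurlbf{\Dcomp}]{\widetilde\bU_h}^2$, and this is where the two alternative hypotheses are used. Under (ii), $\curl\widetilde\bU_h|_K$ has degree at most $k-1$ and $\widetilde\bU_h|_K$ degree at most $k$, so the squared integrands lie in $\bbP_{2k-2}$ and $\bbP_{2k}$ and the quadratures are exact, making the sums equal to $\norm[\hcurlbf{\Dcomp}]{\widetilde\bU_h}^2$. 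Under (i), the $\bbP_{k-1}$- and $\bbP_k$-unisolvence combined with positive weights makes $p\mapsto Q^\bullet(\norm[\IC^3]{p}^2)^{1/2}$ a norm on the corresponding polynomial space over the reference tetrahedron $\breve K$; equivalence of norms in finite dimensions, followed by transport through the affine map $\bmT_K$ (so that the Jacobian determinants cancel), gives $Q^1_K(\norm[\IC^3]{\curl\widetilde\bU_h}^2)\ge c\norm[\Lp{2}{K}]{\curl\widetilde\bU_h}^2$ with a constant independent of $K$ and of $h$. Either way one arrives at $\Re(e^{\ii\theta}\aht{\widetilde\bU_h}{\widetilde\bU_h})\ge\beta\norm[\hcurlbf{\Dcomp}]{\widetilde\bU_h}^2$ with $\beta>0$ uniform in $h$ and $\bT$.

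Continuity is obtained by the same devices: each quadrature term is bounded by the $L^\infty$-norm of the coefficient, uniformly bounded over $\widetilde\frakT$ by Remark \ref{rmk:comp} and the Morrey embedding, times the discrete $L^2$ seminorms controlled by the reference-element equivalence. The complex Lax--Milgram lemma then produces the unique solution $\widetilde\bE_{\bT,h}$. For the a priori bound, coercivity and \eqref{eq:weakh} give $\beta\norm[\hcurlbf{\Dcomp}]{\widetilde\bE_{\bT,h}}^2\le|\fht{\widetilde\bE_{\bT,h}}|$; estimating the linear functional through its quadrature as $|\fht{\widetilde\bE_{\bT,h}}|\le C\omega\norm[\Lp{2}{\Dcomp}]{\bJ_{\widetilde\bT}}\norm[\hcurlbf{\Dcomp}]{\widetilde\bE_{\bT,h}}$ and then invoking the change of variables \eqref{eq:rhs_bound} to replace $\norm[\Lp{2}{\Dcomp}]{\bJ_{\widetilde\bT}}$ by $\norm[\Lp{2}{\D_\bT}]{\bJ}$ yields \eqref{eq:aprioriEh}. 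I expect the principal obstacle to be the uniform-in-$h$ coercivity under hypothesis (i): establishing that the discrete quadrature seminorm is equivalent to the $L^2$-norm with constants independent of the mesh size is precisely where the fixed reference quadrature together with the shape-regularity and quasi-uniformity of Assumption \ref{ass:tau} are indispensable.
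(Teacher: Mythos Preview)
Your proposal is correct and follows essentially the same route as the paper. The paper's proof simply cites \cite[Thm.~3.13]{AJSZ20} and \cite[Lem.~3.12]{AJSZ20} for the discrete coercivity and continuity of $\aht{\cdot}{\cdot}$ and the bound on $\fht{\cdot}$, then invokes the complex Lax--Milgram lemma together with Cauchy--Schwarz and \eqref{eq:rhs_bound}; your write-up unpacks exactly what those referenced lemmas establish (positive weights transfer pointwise positivity through the quadrature sums, and the resulting discrete seminorms are equivalent to $L^2$-norms via reference-element scaling under either hypothesis).
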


\begin{proof}
The discrete coercivity 
\begin{align}
\modulo{\aht{\widetilde\bU_{h}}{\widetilde\bU_h}}\geq C\alpha\norm[\hcurlbf{\Dcomp}]{\widetilde\bU_h}^2\quad\forall\,\widetilde\bU_h\in\bm{P}^c_0(\tau_h),
\end{align}
where $\alpha>0$ is as in \eqref{eq:alpha_holdall_smooth} and $C>0$ is independent of both the mesh-size and $\bT\in\frakT$, but depends on $\vartheta$
in \eqref{eq:vartheta_smooth},
was shown in \cite[Thm.~3.13]{AJSZ20}. The continuity
\begin{align}
\modulo{\aht{\widetilde\bU_{h}}{\widetilde\bV_h}}&\leq C\norm[\hcurlbf{\Dcomp}]{\widetilde\bU_h}\norm[\hcurlbf{\Dcomp}]{\widetilde\bV_h}\quad\forall\,\widetilde\bU_h,\,\widetilde\bV_h\in\bm{P}^c_0(\tau_h),
\end{align}
on the other hand, follows from \cite[Lem.~3.12]{AJSZ20}, where $C>0$ is as before and not necessarily the
same in each appearance.
Moreover, by an application of \cite[Lem.~3.12 (i)]{AJSZ20} we have that
\begin{align*}
\modulo{\fht{\widetilde\bV_h}}
&=\modulo{\omega\sum_{K\in\tau_h} 
  Q^2_K\left(\bJ_{\widetilde\bT} \cdot\overline{{\widetilde\bV_h}}\right)}
\leq C\omega\sum_{K\in\tau_h}
\normz[\Lp{2}{K}]{\bJ_{\widetilde\bT}} \normz[\Lp{2}{K}]{\widetilde\bV_h}\quad\forall\,\widetilde\bV_h\in\bm{P}^c_0(\tau_h),
\end{align*}
for $C>0$ as before.
Together with our assumptions,
the Cauchy-Schwartz inequality and \eqref{eq:rhs_bound},
the complex Lax-Milgram Lemma then
ensures the existence and uniqueness of the solution to Problem
\ref{prob:disc_var_ref} for each $\bT\in\frakT$ and the {\em a priori} bound in \eqref{eq:aprioriEh}.
\end{proof}

\begin{remark}\label{rmk:lip_assumption_dis}
Much like the results in Sections \ref{ssec:pert_dom} and \ref{ssec:pullback}, 
Theorem \ref{thm:disc} makes no use of the smoothness of $\Dnul$, 
the parameters $\varepsilon$, $\mu$ and $\bJ$ or of the transformations $\bT\in\frakT$,
and still hold true when $N=0$ in Assumption \ref{ass:material_trafo_smooth}.
\end{remark}

\subsubsection{Discretization error}
The discretization error
$\norm[\hcurlbf{\Dcomp}]{\widetilde\bE_{\bT}-\widetilde\bE_{\bT,h}}$
may be bounded with the help of Strang's Lemma. 
The following results---studied first in \cite[Sec.~3]{AJSZ20} and later generalized
in \cite{AJ_2020}--- will give sufficient conditions on the quadrature
rules defining $\aht{\cdot}{\cdot}$ and $\fht{\cdot}$ to ensure bounds
on the discretization error with respect to the mesh-size. We begin by
stating Strang's Lemma (\emph{cf.}~\cite{Ciarlet:2002aa}).

For $\widetilde\bU\in\hncurlbf{\Dcomp}$ and arbitrary $i\in\IN$
and $\bT\in\frakT$, set
\begin{align*}
\mathrm{A}_{1,\bT,i}(\widetilde{\bU})&:=
\inf\limits_{\widetilde{\bU}_{h_i}\in\bm{P}^c_k(\tau_{h_i})}\!
\norm[\hcurlbf{\Dcomp}]{\widetilde{\bU}-\widetilde{\bU}_{h_i}}
+\sup_{\substack{\widetilde{\bV}_{h_i} \in\bm{P}^c_k(\tau_{h_i})\\\widetilde\bV_{h_i}\neq 0}}\!
\frac{|\act{\widetilde{\bU}_{h_i}}{\widetilde{\bV}_{h_i}}-\aht{\widetilde{\bU}_{h_i}}{\widetilde{\bV}_{h_i}}|}
{\norm[\hcurlbf{\Dcomp}]{\widetilde{\bV}_{h_i}}},\\
\mathrm{A}_{2,\bT,i}&:=
\sup_{\substack{\widetilde{\bV}_{h_i} \in\bm{P}^c_k(\tau_{h_i})\\\widetilde\bV_{h_i}\neq 0}}
\frac{\vert{\fct{\widetilde{\bV}_{h_i}}-\fht{\widetilde{\bV}_{h_i}}}\vert}
{\norm[\hcurlbf{\Dcomp}]{\widetilde{\bV}_{h_i}}}. 
\end{align*}

\begin{lemma}\label{lemma:strang}
Under the assumptions of Theorems \ref{thm:cont} and \ref{thm:disc},
there exist unique solutions $\widetilde\bE_{\bT}\in\hncurlbf{\Dcomp}$ and
$\widetilde\bE_{\bT,h_i}\in \bm{P}^c_k(\tau_{h_i})$ of Problems
\ref{prob:weak_comp_T} and \ref{prob:disc_var_ref}, respectively,
and $c>0$ independent of the mesh-size and of
$\bT\in\frakT$ such that
\begin{align}\label{eq:strang}
\norm[\hcurlbf{\Dcomp}]{{\widetilde{\bE}_{\bT}}-{\widetilde{\bE}_{\bT,h_i}}}
\le c (\mathrm{A}_{1,\bT,i}(\widetilde{\bE}_{\bT})+\mathrm{A}_{2,\bT,i}),
\end{align}
holds for every $i\in\IN$ and $\bT\in\frakT$.
\end{lemma}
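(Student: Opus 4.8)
The statement is the classical first Strang lemma for the variational crime introduced by numerical quadrature, with the additional requirement that the stability constant be uniform in $\bT\in\frakT$ and in the mesh-size. The plan is therefore to run the textbook argument while carefully tracking these dependencies. The two prerequisites are already available: the well-posedness of Problem \ref{prob:weak_comp_T} and the continuity of $\act{\cdot}{\cdot}$ on $\hncurlbf{\Dcomp}$, with a constant $C_{\rm cont}>0$ independent of $\bT$, follow exactly as in Theorems \ref{thm:cont} and \ref{thm:unif_pw_reg} (the same Proposition~2.11 of \cite{AJSZ20} argument transported from $\Dnul$ to $\Dcomp$); and Theorem \ref{thm:disc} supplies the unique discrete solution $\widetilde\bE_{\bT,h_i}\in\bm{P}^c_k(\tau_{h_i})$ together with the discrete coercivity $|\aht{\widetilde\bV_{h_i}}{\widetilde\bV_{h_i}}|\ge C\alpha\norm[\hcurlbf{\Dcomp}]{\widetilde\bV_{h_i}}^2$, whose constant $C\alpha>0$ is independent of both the mesh-size and $\bT\in\frakT$. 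Note also that $\bm{P}^c_k(\tau_{h_i})\subset\hncurlbf{\Dcomp}$, so the only nonconformity is in the forms, not in the space.

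For the core estimate I would write $u:=\widetilde\bE_\bT$, $u_h:=\widetilde\bE_{\bT,h_i}$, fix an arbitrary $w_h\in\bm{P}^c_k(\tau_{h_i})$, and apply discrete coercivity to $u_h-w_h\in\bm{P}^c_k(\tau_{h_i})$, giving $C\alpha\norm[\hcurlbf{\Dcomp}]{u_h-w_h}^2\le|\aht{u_h-w_h}{u_h-w_h}|$. Replacing $\aht{u_h}{u_h-w_h}$ by $\fht{u_h-w_h}$ via the discrete equation \eqref{eq:weakh}, and testing the continuous Problem \ref{prob:weak_comp_T} against $u_h-w_h$ (legitimate by conformity) so that $\act{u}{u_h-w_h}=\fct{u_h-w_h}$, I would add and subtract terms to obtain
\begin{align*}
\aht{u_h-w_h}{u_h-w_h}
&=\bigl(\fht{u_h-w_h}-\fct{u_h-w_h}\bigr)
 +\act{u-w_h}{u_h-w_h}\\
&\quad+\bigl(\act{w_h}{u_h-w_h}-\aht{w_h}{u_h-w_h}\bigr).
\end{align*}
The three summands are bounded, respectively, by $\mathrm{A}_{2,\bT,i}\,\norm[\hcurlbf{\Dcomp}]{u_h-w_h}$ (definition of $\mathrm{A}_{2,\bT,i}$), by $C_{\rm cont}\norm[\hcurlbf{\Dcomp}]{u-w_h}\norm[\hcurlbf{\Dcomp}]{u_h-w_h}$ (continuity of $\act{\cdot}{\cdot}$), and by $\bigl(\sup_{\widetilde\bV_{h_i}\neq0}|\act{w_h}{\widetilde\bV_{h_i}}-\aht{w_h}{\widetilde\bV_{h_i}}|/\norm[\hcurlbf{\Dcomp}]{\widetilde\bV_{h_i}}\bigr)\norm[\hcurlbf{\Dcomp}]{u_h-w_h}$, the last being exactly the quadrature-consistency supremum entering $\mathrm{A}_{1,\bT,i}$ evaluated at the trial function $w_h$.

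Dividing through by $\norm[\hcurlbf{\Dcomp}]{u_h-w_h}$ (the case $u_h=w_h$ being immediate) and invoking the triangle inequality $\norm[\hcurlbf{\Dcomp}]{u-u_h}\le\norm[\hcurlbf{\Dcomp}]{u-w_h}+\norm[\hcurlbf{\Dcomp}]{w_h-u_h}$ yields
\begin{align*}
\norm[\hcurlbf{\Dcomp}]{u-u_h}\le
c\Bigl(\norm[\hcurlbf{\Dcomp}]{u-w_h}
+\sup_{\widetilde\bV_{h_i}\neq0}\frac{|\act{w_h}{\widetilde\bV_{h_i}}-\aht{w_h}{\widetilde\bV_{h_i}}|}{\norm[\hcurlbf{\Dcomp}]{\widetilde\bV_{h_i}}}
+\mathrm{A}_{2,\bT,i}\Bigr),
\end{align*}
with $c$ depending only on $C_{\rm cont}$ and $C\alpha$. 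Since $w_h\in\bm{P}^c_k(\tau_{h_i})$ was arbitrary and the left-hand side is independent of it, taking the infimum over $w_h$ on the right collapses the first two terms into $\mathrm{A}_{1,\bT,i}(u)$ and produces \eqref{eq:strang}. The only genuinely delicate point—the ``obstacle''—is not the algebra but the uniformity of $c$ over $\bT\in\frakT$ and over the mesh sequence; this is secured precisely by the $\bT$- and mesh-independent coercivity constant from Theorem \ref{thm:disc} and the $\bT$-independent continuity constant $C_{\rm cont}$ inherited from the analysis of Theorems \ref{thm:cont} and \ref{thm:unif_pw_reg}.
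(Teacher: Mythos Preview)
Your argument is correct and is precisely the classical first Strang lemma derivation that the paper invokes by citing \cite{Ciarlet:2002aa} rather than spelling out; the paper gives no proof of its own for this lemma. Your attention to the uniformity of $c$ in $\bT\in\frakT$ and in the mesh-size, secured via the discrete coercivity constant from Theorem~\ref{thm:disc} and the $\bT$-independent continuity constant of $\act{\cdot}{\cdot}$, is exactly the point the paper needs and leaves implicit.
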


We continue by stating relevant consistency error estimates that will
permit a later application of Strang's Lemma. They correspond to
adaptations to our context of Theorems 2 and 3 in \cite{AJ_2020}
(\emph{cf.~}Lemmas 3.15 and 3.16 in \cite{AJSZ20}). 

\begin{lemma}
\label{lemma:rhserror}
Let Assumptions \ref{ass:material_trafo_smooth}, \ref{ass:computational_domain} and \ref{ass:tau}
hold. If the
quadrature rule $Q^2$ on $\rK$ is exact on polynomials of degree
$k+N-1$, where $N\in\IN$ is as in Assumption \ref{ass:material_trafo_smooth},
then, for any sequence $\{\widetilde\bV_{h_i}\}_{i\in\IN}$ with
$\widetilde\bV_{h_i}\in\bm{P}^c_k(\tau_{h_i})$ for all $i\in\IN$, it holds
that
\begin{align}
\vert{\fct{\widetilde{\bV}_{h_i}}-\fht{\widetilde{\bV}_{h_i}}}\vert\leq C h_i^{N} 
\vert \Dcomp\vert^{\frac{1}{2}-\frac{1}{q}}
\norm[\Wsob{N,q}{\D_H}]{\bJ}
\norm[{0},{\Dcomp}]{\widetilde\bV_{h_i}},
\end{align}    
for each $\bT\in\frakT$, where $q>3$ is as in Assumption \ref{ass:material_trafo_smooth} and
the constant $C>0$ is independent of
$i\in\IN$ and $\bT\in\frakT$.
\end{lemma}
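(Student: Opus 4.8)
The plan is to reduce the global quadrature error to a sum of elementwise errors, estimate each on the fixed reference tetrahedron $\rK$ by a Bramble--Hilbert argument, and then recover the order $h_i^N$ by an affine scaling argument combined with the quasi-uniformity from Assumption~\ref{ass:tau}. First I would localize the error. Since Assumption~\ref{ass:tau}\eqref{item:tau:partition} forces each $K\in\tau_{h_i}$ to lie inside a single subdomain $\Dcomp_j$, the pullback data $\bJ_{\widetilde\bT}$ belongs to $\Wsob{N,q}{K}$ on every element, and
\[
\fct{\widetilde\bV_{h_i}}-\fht{\widetilde\bV_{h_i}}=-\ii\omega\sum_{K\in\tau_{h_i}}\left(\int_K\bJ_{\widetilde\bT}\cdot\overline{\widetilde\bV_{h_i}}\,\d\!\widetilde{\bx}-Q^2_K\big(\bJ_{\widetilde\bT}\cdot\overline{\widetilde\bV_{h_i}}\big)\right).
\]
Using the affine element map $\bmT_K:\rK\to K$ and the definition \eqref{eq:QK} of $Q^2_K$, each summand equals $\modulo{\det(\d\!\bmT_K)}$ times the reference quadrature error $\breve E(\breve\bJ_K\cdot\overline{\breve\bV_K})$, where $\breve E(\breve g):=\int_{\rK}\breve g\,\d\!\breve{\bx}-Q^2(\breve g)$, $\breve\bJ_K:=\bJ_{\widetilde\bT}\circ\bmT_K$, and $\breve\bV_K:=\widetilde\bV_{h_i}\circ\bmT_K\in\bbP_k(\rK;\IC^3)$.

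The heart of the argument is the reference estimate. Since $q>3=d$, the embedding $\Wsob{N,q}{\rK}\hookrightarrow\bm\cC^0(\rK)$ holds, so the pointwise evaluations in $Q^2$ are well defined and $\breve E$ is a bounded linear functional on $\Wsob{N,q}{\rK}$. The key observation is that, with $\breve\bV_K$ regarded as a \emph{fixed} polynomial of degree $k$, the functional $\breve g\mapsto\breve E(\breve g\cdot\overline{\breve\bV_K})$ annihilates $\bbP_{N-1}(\rK;\IC^3)$: if $\breve g\in\bbP_{N-1}$ then $\breve g\cdot\overline{\breve\bV_K}\in\bbP_{k+N-1}$, on which $Q^2$ is exact by hypothesis. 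The Bramble--Hilbert lemma then bounds this functional by the seminorm $\seminorm[\Wsob{N,q}{\rK}]{\breve\bJ_K}$, and its operator norm is controlled by $\norm[\Lp{2}{\rK}]{\breve\bV_K}$ after invoking equivalence of norms on the finite-dimensional space $\bbP_k(\rK;\IC^3)$, all constants depending only on $\rK$, $k$, and $Q^2$. This yields $\modulo{\breve E(\breve\bJ_K\cdot\overline{\breve\bV_K})}\le C\,\seminorm[\Wsob{N,q}{\rK}]{\breve\bJ_K}\,\norm[\Lp{2}{\rK}]{\breve\bV_K}$.

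It is precisely the appearance of the \emph{seminorm} (rather than the full norm) that will produce the order $h_i^N$, and this step is where I expect the main difficulty: it simultaneously requires the subcritical embedding $q>d$ (for pointwise evaluation and boundedness of $\breve E$) and the correct polynomial-invariance bookkeeping, namely that exactness on degree $k+N-1$ translates into annihilation of $\bbP_{N-1}$ in the $\bJ$-argument. With the reference estimate in hand, the remaining steps are routine scaling and summation.

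Finally I would scale back and sum. A standard affine change of variables gives $\seminorm[\Wsob{N,q}{\rK}]{\breve\bJ_K}\sim h_K^{N-3/q}\seminorm[\Wsob{N,q}{K}]{\bJ_{\widetilde\bT}}$ and $\norm[\Lp{2}{\rK}]{\breve\bV_K}\sim h_K^{-3/2}\norm[\Lp{2}{K}]{\widetilde\bV_{h_i}}$, while $\modulo{\det(\d\!\bmT_K)}\sim h_K^3$; by quasi-uniformity $h_K\sim h_i$, so each elementwise error is bounded by $C\,h_i^{N+3/2-3/q}\,\seminorm[\Wsob{N,q}{K}]{\bJ_{\widetilde\bT}}\,\norm[\Lp{2}{K}]{\widetilde\bV_{h_i}}$. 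Summing over $K$, applying discrete H\"older with exponents $q$ and $q'=q/(q-1)$, and then passing from the $\ell^{q'}$- to the $\ell^2$-norm of $(\norm[\Lp{2}{K}]{\widetilde\bV_{h_i}})_K$ at the cost of a factor $\modulo{\tau_{h_i}}^{1/2-1/q}$ (legitimate since $q'\le 2$), together with $\modulo{\tau_{h_i}}\le C\,\modulo{\Dcomp}\,h_i^{-3}$, yields the factor $\modulo{\Dcomp}^{1/2-1/q}h_i^{-3(1/2-1/q)}$, which cancels the negative power of $h_i$ and leaves the net power $h_i^N$, the $L^2$ norm $\norm[0,\Dcomp]{\widetilde\bV_{h_i}}$, and the global seminorm $\seminorm[\Wsobpw{N,q}{\Dcomp}]{\bJ_{\widetilde\bT}}$. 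To conclude I would estimate $\seminorm[\Wsobpw{N,q}{\Dcomp}]{\bJ_{\widetilde\bT}}\le C\norm[\Wsob{N,q}{\D_H}]{\bJ}$ uniformly in $\bT\in\frakT$ using the chain-rule bounds already established in the proof of Theorem~\ref{thm:unif_reg} (cf.\ \eqref{eq:par_T_est}, \eqref{eq:mu_J_divJ_est}) and the compactness of $\widetilde\frakT$ in $\Wsobpw{N+1,\infty}{\Dcomp}$ from Remark~\ref{rmk:comp}, which furnishes the asserted $\bT$-independent constant.
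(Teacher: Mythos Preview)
Your proof is correct and follows essentially the same route as the paper's. The paper outsources the elementwise estimate to \cite[Lemma~7]{AJ_2020}, which yields $|E_K|\le Ch_i^N|K|^{1/2-1/q}\norm[\Wsob{N,q}{K}]{\bJ_{\widetilde\bT}}\norm[0,K]{\widetilde\bV_{h_i}}$, and then sums via a three-exponent H\"older inequality (with exponents $(1/2-1/q)^{-1}$, $q$, $2$); you reprove the elementwise bound from scratch via Bramble--Hilbert on $\rK$ and sum by two-term H\"older followed by the $\ell^{q'}\hookrightarrow\ell^2$ embedding, which is an equivalent bookkeeping of the same factors.
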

\begin{proof}
Fix $i\in\IN$ and $\bT\in\frakT$ and recall
$\widetilde\bT:=\bT\circ\widehat\bT$ for $\widehat\bT:\Dcomp\to\Dnul$
as in Assumption \ref{ass:computational_domain}. An application of the chain and product
rules as in the proof of Theorem \ref{thm:unif_reg} yields, together with
Assumption \ref{ass:computational_domain}, for any $j\in\{1,\hdots,n\}$
with $n\in\IN$ as in Assumption \ref{ass:computational_domain}, 
that
\begin{align}
\norm[\Wsob{N,q}{\Dcomp_j}]{\bJ_{\widetilde\bT}}\leq C
\norm[\Wsob{N,q}{\bT(\Dnul_j)}]{\bJ}.
\end{align}
The constant $C$ may be chosen independently of $\bT\in\frakT$, so
that $\bJ_{\widetilde\bT}\in \Wsobpw{N,q}{\Dcomp}$ and
\begin{align}\label{eq:JT_bound}
\norm[\Wsobpw{N,q}{\Dcomp}]{\bJ_{\widetilde\bT}}\leq C
\norm[\Wsob{N,q}{\D_\bT}]{\bJ}\leq C
\norm[\Wsob{N,q}{\D_H}]{\bJ},
\end{align}
for $C>0$ not necessarily the same in each appearance. Then, and for any
$\widetilde\bV_{h_i}\in\bm{P}^c_k(\tau_{h_i})$,
Lemma 7 in \cite{AJ_2020} yields
\begin{align}\label{eq:rhs_error1}
\left\vert{\fct{\widetilde{\bV}_{h_i}}-\fht{\widetilde{\bV}_{h_i}}}\right\vert\leq Ch_i^N\sum\limits_{K\in\tau_{h_i}}\modulo{K}^{\half-\frac{1}{p}}\norm[\Wsob{N,p}{K}]{\bJ_\bT}\norm[0,K]{\widehat\bV_{h_i}},
\end{align}
where the positive constant is independent of $i\in\IN$ and
$\bT\in\frakT$. Then, using that $\bJ_{\widetilde\bT}\in\Wsobpw{N,p}{\Dcomp}$
with $p>2$ and Assumption \ref{ass:tau}, Hölder's inequality yields
\begin{align}\label{eq:rhs_error2}
\sum\limits_{K\in\tau_{h_i}}\modulo{K}^{\half-\frac{1}{q}}\norm[\Wsob{N,q}{K}]{\bJ_{\widetilde\bT}}\norm[0,K]{\widetilde\bV_{h_i}}\leq
\vert{\Dcomp}\vert^{\frac{1}{2}-\frac{1}{q}}
\norm[\Wsobpw{N,q}{\Dcomp}]{\bJ_{\widetilde\bT}}
\norm[{0},{\Dcomp}]{\widetilde\bV_{h_i}}.
\end{align}
Combining the estimates in
\eqref{eq:JT_bound}, \eqref{eq:rhs_error1} and \eqref{eq:rhs_error2}
yields the required result.
\end{proof}

\begin{lemma}
\label{lemma:bilerror}
Let Assumptions \ref{ass:material_trafo_smooth},
\ref{ass:computational_domain} and \ref{ass:tau} hold. If the quadrature rules
$Q^1$ and $Q^2$ on $\rK$ are exact on polynomials of
degree $k+N-2$ and $k+N-1$, respectively, where $N\in\IN$ is as in Assumption
\ref{ass:material_trafo_smooth}, then, for any pair of
sequences $\{\widetilde\bU_{h_i}\}_{i\in\IN}$ and
$\{\widetilde\bV_{h_i}\}_{i\in\IN}$ with $\widetilde\bU_{h_i}$,
$\widetilde\bV_{h_i}\in\bm{P}^c_k(\tau_{h_i})$ for all $i\in\IN$, it holds that
\begin{align}
&\vert\act{\widetilde{\bU}_{h_i}}{\widetilde{\bV}_{h_i}}-\aht{\widetilde{\bU}_{h_i}}{\widetilde{\bV}_{h_i}}\vert
\leq h_i^N C
\norm[\hscurlbfpw{N}{\Dcomp}]{\widetilde\bU_{h_i}}
\norm[\hcurlbf{\Dcomp}]{\widetilde\bV_{h_i}},
\end{align}    
for each $\bT\in\frakT$, where the constant $C>0$ is independent of
$i\in\IN$ and $\bT\in\frakT$.
\end{lemma}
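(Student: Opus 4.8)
The plan is to mirror the proof of Lemma~\ref{lemma:rhserror} in structure, splitting the consistency error of the sesquilinear form into its stiffness (curl) and mass contributions and bounding each by an elementwise quadrature estimate that is then summed over $\tau_{h_i}$. Fix $i\in\IN$ and $\bT\in\frakT$ and recall $\widetilde\bT:=\bT\circ\widehat\bT$ with $\widehat\bT:\Dcomp\to\Dnul$ as in Assumption~\ref{ass:computational_domain}. Writing $E^\ell_K(\phi):=\int_K\phi\,\d\!\widetilde{\bx}-Q^\ell_K(\phi)$ for $\ell\in\{1,2\}$ and comparing \eqref{eq:ant_fnt_comp_def} with \eqref{eq:aht}, the error decomposes as
\begin{align*}
\act{\widetilde\bU_{h_i}}{\widetilde\bV_{h_i}}-\aht{\widetilde\bU_{h_i}}{\widetilde\bV_{h_i}}
=\sum_{K\in\tau_{h_i}}E^1_K\!\left(\mu_{\widetilde\bT}^{-1}\curl\widetilde\bU_{h_i}\cdot\curl\overline{\widetilde\bV_{h_i}}\right)
-\omega^2\sum_{K\in\tau_{h_i}}E^2_K\!\left(\eps_{\widetilde\bT}\widetilde\bU_{h_i}\cdot\overline{\widetilde\bV_{h_i}}\right),
\end{align*}
so it suffices to control the two families of elementwise errors separately.

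First I would record the piecewise smoothness of the transported coefficients. By item~(iii) of Assumption~\ref{ass:material_trafo_smooth} one has $\mu^{-1},\eps\in\Wsob{N,\infty}{\D_H}$, so the chain and product rules used in \eqref{eq:par_T_est}--\eqref{eq:mu_J_divJ_est} and \eqref{eq:JT_bound} yield $\mu_{\widetilde\bT}^{-1},\eps_{\widetilde\bT}\in\Wsobpw{N,\infty}{\Dcomp}$ with
\begin{align*}
\norm[\Wsobpw{N,\infty}{\Dcomp}]{\mu_{\widetilde\bT}^{-1}}+\norm[\Wsobpw{N,\infty}{\Dcomp}]{\eps_{\widetilde\bT}}\le C,
\end{align*}
where $C>0$ depends on $\norm[\Wsob{N,\infty}{\D_H}]{\mu^{-1}}$, $\norm[\Wsob{N,\infty}{\D_H}]{\eps}$ and $\vartheta$, and is uniform over $\bT\in\frakT$ by the compactness of $\widetilde\frakT\Subset\Wsobpw{N+1,\infty}{\Dcomp}$ (Remark~\ref{rmk:comp}). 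That the coefficients are $L^\infty$ (rather than merely $L^q$, as the source $\bJ$ in Lemma~\ref{lemma:rhserror}) is exactly what removes the dependence on $q$ from the final bound.

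Next I would invoke the elementwise consistency estimates of Theorems~2 and~3 in \cite{AJ_2020} (the bilinear analogues of the Lemma~7 used in Lemma~\ref{lemma:rhserror}; cf.\ Lemma~3.16 in \cite{AJSZ20}), specialized to $L^\infty$ coefficients. On a single $K\in\tau_{h_i}$ the factors $\curl\widetilde\bU_{h_i},\curl\widetilde\bV_{h_i}$ are polynomials of degree $\le k-1$ and $\widetilde\bU_{h_i},\widetilde\bV_{h_i}$ of degree $\le k$; leaving one polynomial factor untouched and Taylor expanding the product of the coefficient with the other factor to order $N-1$ shows that exactness of $Q^1$ on $\bbP_{k+N-2}(\breve{K};\IC)$ and of $Q^2$ on $\bbP_{k+N-1}(\breve{K};\IC)$ ensures that the quadrature integrates the polynomial part exactly; the remainder is then estimated by the Bramble--Hilbert lemma as
\begin{align*}
\left|E^1_K\!\left(\mu_{\widetilde\bT}^{-1}\curl\widetilde\bU_{h_i}\cdot\curl\overline{\widetilde\bV_{h_i}}\right)\right|
\le C h_K^{N}\norm[\Wsob{N,\infty}{K}]{\mu_{\widetilde\bT}^{-1}}\,\norm[\Wsob{N,2}{K}]{\curl\widetilde\bU_{h_i}}\,\norm[0,K]{\curl\widetilde\bV_{h_i}},
\end{align*}
and the analogous bound for $E^2_K$ with $Q^2$, $\eps_{\widetilde\bT}$, $\widetilde\bU_{h_i}$ and $\widetilde\bV_{h_i}$. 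Using $h_K\le h_i$, pulling out the uniform bounds from the first step for $\norm[\Wsob{N,\infty}{K}]{\mu_{\widetilde\bT}^{-1}}$ and $\norm[\Wsob{N,\infty}{K}]{\eps_{\widetilde\bT}}$, and applying the Cauchy--Schwarz inequality to the two resulting $\ell^2$ sequences (as in \eqref{eq:rhs_error2}) assembles the element-local norms of $\widetilde\bU_{h_i}$ into $\norm[\hscurlbfpw{N}{\Dcomp}]{\widetilde\bU_{h_i}}$ and those of $\widetilde\bV_{h_i}$ into $\norm[\hcurlbf{\Dcomp}]{\widetilde\bV_{h_i}}$, giving the claim after absorbing the fixed coefficient norms and $\omega$ into $C$.

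The one genuinely delicate point is the degree bookkeeping together with the asymmetry of the estimate: the coefficient and the trial function $\widetilde\bU_{h_i}$ must carry the full $W^{N,\infty}$, resp.\ $W^{N,2}$, regularity while the test function $\widetilde\bV_{h_i}$ enters only through $L^2$-norms of $\widetilde\bV_{h_i}$ and $\curl\widetilde\bV_{h_i}$. Respecting this asymmetry is what makes the thresholds $k+N-2$ and $k+N-1$ (rather than the naive $2k-2$ and $2k$) suffice, and is the reason the stiffness and mass terms require two distinct exactness degrees; everything else is the routine H\"older/Cauchy--Schwarz assembly already carried out for the right-hand side in Lemma~\ref{lemma:rhserror}.
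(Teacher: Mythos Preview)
Your proposal is correct and follows essentially the same route as the paper: establish that $\mu_{\widetilde\bT}^{-1},\eps_{\widetilde\bT}\in\Wsobpw{N,\infty}{\Dcomp;\IC^{3\times 3}}$ uniformly in $\bT\in\frakT$ via the chain and product rules, invoke the elementwise quadrature consistency estimate from \cite{AJ_2020} (the paper cites Lemma~6 and Theorem~2 there), pull out the uniform coefficient bounds, and assemble with Cauchy--Schwarz. Your additional remarks on the degree bookkeeping and the asymmetry between $\widetilde\bU_{h_i}$ and $\widetilde\bV_{h_i}$ are accurate and make the mechanism behind the exactness thresholds more explicit than the paper does.
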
    
\begin{proof}
An analogous reasoning as that in the
proof of Lemma \ref{lemma:rhserror} shows that there exists a constant
$C>0$ such that
\begin{align}
\norm[\Wsob{N,\infty}{\Dcomp_j;\IC^{3\times 3}}]{\mu^{-1}_{\widetilde\bT}}&\leq
C\norm[\Wsob{N,\infty}{\bT(\Dnul_j);\IC^{3\times 3}}]{\mu^{-1}},
\\
\norm[\Wsob{N,\infty}{\Dcomp_j;\IC^{3\times 3}}]{\eps_{\widetilde\bT}}&\leq
C\norm[\Wsob{N,\infty}{\bT(\Dnul_j);\IC^{3\times 3}}]{\eps},
\end{align}
for any $\bT\in\frakT$ and $j\in\{1,\hdots,n\}$---$n\in\IN$ is as
in Assumption \ref{ass:computational_domain}. Therefore, we have that
$\eps_{\widetilde\bT},\ \mu^{-1}_{\widetilde\bT}\in \Wsobpw{N,\infty}{\Dcomp;\IC^{3\times 3}}$,
with
\begin{align}\label{eq:muT_LamT_bound}
\begin{aligned}
\norm[\Wsobpw{N,\infty}{\Dcomp;\IC^{3\times 3}}]{\mu^{-1}_{\widetilde\bT}}&\leq
C\norm[\Wsob{N,\infty}{\D_\bT;\IC^{3\times 3}}]{\mu^{-1}},
\\
\norm[\Wsobpw{N,\infty}{\Dnul;\IC^{3\times 3}}]{\eps_{\widetilde\bT}}&\leq
C\norm[\Wsob{N,\infty}{\D_\bT;\IC^{3\times 3}}]{\eps},
\end{aligned}
\end{align}
for $C>0$ as before.

Fix $i\in\IN$ and $\bT\in\frakT$. 
For any pair $\widetilde\bU_{h_i},\ \widetilde\bV_{h_i}\in\bm{P}^c_k(\tau_{h_i})$ 
\cite[Lemma 6]{AJ_2020}---also \cite[Thm.~2]{AJ_2020}---yields
\begin{align}
&\vert\act{\widetilde{\bU}_{h_i}}{\widetilde{\bV}_{h_i}}-\aht{\widetilde{\bU}_{h_i}}{\widetilde{\bV}_{h_i}}\vert\\
&\leq Ch_i^N \sum\limits_{K\in\tau_{h_i}}
C_{\eps_{\widetilde\bT},K}\norm[N,K]{\widetilde\bU_{h_i}}
\norm[{0},{K}]{\widetilde\bV_{h_i}}+
C_{\mu_{\widetilde\bT}^{-1},K}\norm[N,K]{\curl\widetilde\bU_{h_i}}
\norm[{0},{K}]{\curl\widetilde\bV_{h_i}},\label{eq:sesq_est_1}
\end{align}
where, for each $K\in\tau_{h_i}$, we have defined
\begin{align}
C_{\eps_{\widetilde\bT},K}
:=
\omega^2\norm[\Wsob{N,\infty}{K}]{\eps_{\widetilde\bT}}
\quad\mbox{and}\quad 
C_{\mu_{\widetilde\bT}^{-1},K}
:=
\norm[\Wsob{N,\infty}{K}]{\mu_{\widetilde\bT}^{-1}},
\end{align}
and the constant $C>0$ is independent of 
$\eps,\mu\in\Wsob{N,\infty}{\D_H;\IC^{3\times 3}}$, $i\in\IN$ and $\bT\in\frakT$.
Since
$\eps_{\widetilde\bT},\ \mu^{-1}_{\widetilde\bT}\in \Wsobpw{N,\infty}{\Dcomp;\IC^{3\times 3}}$,
we have that
\begin{align}
&\sum\limits_{K\in\tau_{h_i}}
C_{\eps_{\widetilde\bT},K}\norm[N,K]{\widetilde\bU_{h_i}}
\norm[{0},{K}]{\widetilde\bV_{h_i}}+
C_{\mu_{\widetilde\bT}^{-1},K}\norm[N,K]{\curl\widetilde\bU_{h_i}}
\norm[{0},{K}]{\curl\widetilde\bV_{h_i}}\\
&\leq C_{\eps_{\widetilde\bT}}\sum\limits_{K\in\tau_{h_i}}
\norm[N,K]{\widetilde\bU_{h_i}}
\norm[{0},{K}]{\widetilde\bV_{h_i}}+
C_{\mu_{\widetilde\bT}^{-1}}\sum\limits_{K\in\tau_{h_i}}
\norm[N,K]{\curl\widetilde\bU_{h_i}}
\norm[{0},{K}]{\curl\widetilde\bV_{h_i}}\\
&\leq \max(C_{\eps_{\widetilde\bT}},C_{\mu_{\widetilde\bT}^{-1}})\sum\limits_{K\in\tau_{h_i}}
\norm[N,K]{\widetilde\bU_{h_i}}
\norm[{0},{K}]{\widetilde\bV_{h_i}}+
\norm[N,K]{\curl\widetilde\bU_{h_i}}
\norm[{0},{K}]{\curl\widetilde\bV_{h_i}}\\
&\leq \max(C_{\eps_{\widetilde\bT}},C_{\mu_{\widetilde\bT}^{-1}})\sum\limits_{K\in\tau_{h_i}}
\norm[\hscurlbf{N}{K}]{\widetilde\bU_{h_i}}
\norm[\hcurlbf{K}]{\widetilde\bV_{h_i}}\\
&\leq \max(C_{\eps_{\widetilde\bT}},C_{\mu_{\widetilde\bT}^{-1}})
\norm[\hscurlbfpw{N}{\Dcomp}]{\widetilde\bU_{h_i}}
\norm[\hcurlbf{\Dcomp}]{\widetilde\bV_{h_i}},\label{eq:sesq_est_2}
\end{align}
where
\begin{align}
C_{\eps_{\widetilde\bT}}:=\omega^2\norm[\Wsobpw{N,\infty}{\Dcomp}]{\eps_{\widetilde\bT}}\quad\mbox{and}\quad C_{\mu_{\widetilde\bT}^{-1}}:=\norm[\Wsobpw{N,\infty}{\Dcomp}]{\mu_{\widetilde\bT}^{-1}}.
\end{align}
Combining the estimates in \eqref{eq:muT_LamT_bound}, \eqref{eq:sesq_est_1}
and \eqref{eq:sesq_est_2} yields the required result.
\end{proof}

In virtue of the requirements of Lemmas  \ref{lemma:rhserror} and \ref{lemma:bilerror},
we continue under the next assumption on the data $\eps$, $\mu$ and $\bJ$ as
well as on the quadrature rules used to construct 
the sesquilinear and antilinear forms in \eqref{eq:aht} and \eqref{eq:fht}.

\begin{assumption}
\label{ass:Q}
Recall $k\in\IN$ as the polynomial degree of the finite element spaces
$\bm{P}^c_k(\tau_h)$ and let $N\in\IN$ be as in Assumption
\ref{ass:material_trafo_smooth}.  We assume that $k\leq N$, that the
weights of the quadratures $Q^1$ and $Q^2$ are positive, that $Q^1$
and $Q^2$ are exact on polynomials of degree $k+N-2$ and $k+N-1$,
respectively, and at least one of the following two conditions is
satisfied
\begin{enumerate}
\item The nodes defining $Q^1$ and $Q^2$ are
$\mathbb{P}_{k-1}(\breve{K};\IC)$ and
$\mathbb{P}_{k}(\breve{K};\IC)$-unisolvent, respectively.
\item $Q^1$ and $Q^2$ are exact on $\bbP_{2k-2}(\breve K;\IC)$
and $\bbP_{2k}(\breve K;\IC)$, respectively.
\end{enumerate}
\end{assumption}

The combination of Lemmas \ref{lemma:rhserror} and \ref{lemma:bilerror}
together with Strang's Lemma (Lemma \ref{lemma:strang}) yields the
following estimate for the convergence rate of $\widetilde\bE_{\bT,h}$
to $\widetilde\bE_\bT$, 
solutions to Problems \ref{prob:disc_var_ref} and \ref{prob:weak_comp_T}, 
respectively.

\begin{theorem}\label{thm:approx}
Let Assumptions \ref{ass:material_trafo_smooth}, \ref{ass:computational_domain}, \ref{ass:tau} and \ref{ass:Q}
hold.
Then, for any $\bT\in\frakT$, there exists a unique solution of
Problem \ref{prob:disc_var_ref},
$\widetilde\bE_{\bT,h_i}\in\bm{P}^c_{k}(\tau_{h_i})$, which satisfies
\begin{align}\label{eq:str_main_claim}
&\norm[\hcurlbf{\Dcomp}]{\widetilde\bE_{\bT}-\widetilde\bE_{\bT,h_i}}
\leq C
h_i^k\norm[\Wsob{N,q}{\D_H}]{\bJ},
\end{align}
where $\widetilde\bE_{\bT}\in\hncurlbf{\Dcomp}$ is the solution of Problem \ref{prob:weak_comp_T}, $N\in\IN$ and $q>3$ are as in Assumption \ref{ass:Q} and the constant $C>0$ is independent of $i\in\IN$ and
$\bT\in\frakT$.
\end{theorem}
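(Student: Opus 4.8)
The plan is to assemble the three preparatory results---Strang's Lemma (Lemma~\ref{lemma:strang}) together with the two consistency estimates (Lemmas~\ref{lemma:rhserror} and~\ref{lemma:bilerror})---and to feed in the piecewise regularity of the exact solution supplied by Theorem~\ref{thm:unif_pw_reg}. Existence and uniqueness of $\widetilde\bE_{\bT,h_i}$ have already been established in Theorem~\ref{thm:disc}, whose hypotheses are contained in Assumption~\ref{ass:Q}, so it remains only to bound the error. By Lemma~\ref{lemma:strang},
\[
\norm[\hcurlbf{\Dcomp}]{\widetilde\bE_\bT-\widetilde\bE_{\bT,h_i}}\le c\,\bigl(\mathrm{A}_{1,\bT,i}(\widetilde\bE_\bT)+\mathrm{A}_{2,\bT,i}\bigr),
\]
with $c$ independent of $i$ and of $\bT\in\frakT$; the task thus reduces to showing that both $\mathrm{A}_{1,\bT,i}(\widetilde\bE_\bT)$ and $\mathrm{A}_{2,\bT,i}$ are bounded by $Ch_i^k\norm[\Wsob{N,q}{\D_H}]{\bJ}$ uniformly in $\bT$.

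The right-hand side term is immediate: since Assumption~\ref{ass:Q} makes $Q^2$ exact to the degree required by Lemma~\ref{lemma:rhserror}, that lemma gives $\mathrm{A}_{2,\bT,i}\le C h_i^N |\Dcomp|^{\frac12-\frac1q}\norm[\Wsob{N,q}{\D_H}]{\bJ}$, and because $N\ge k$ and the mesh-sizes are bounded we absorb $h_i^{N-k}$ into the constant to obtain $\mathrm{A}_{2,\bT,i}\le Ch_i^k\norm[\Wsob{N,q}{\D_H}]{\bJ}$. For $\mathrm{A}_{1,\bT,i}(\widetilde\bE_\bT)$ I would take as competitor in the infimum the curl-conforming (N\'ed\'elec) edge-element interpolant $\Pi_{h_i}\widetilde\bE_\bT\in\bm{P}^c_k(\tau_{h_i})$. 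This is well-defined because Theorem~\ref{thm:unif_pw_reg} places $\widetilde\bE_\bT$ in $\Wsobpw{N,q}{\curl;\Dcomp}$ with $q>3$, so $\widetilde\bE_\bT$ is continuous on each $\overline{\Dcomp_j}$ and its edge, face and volume moments are meaningful; crucially, the mesh resolves the subdomains (Assumption~\ref{ass:tau}), so every element $K$ lies in a single $\Dcomp_j$ on which $\widetilde\bE_\bT$ is fully $W^{N,q}$-smooth, and the element-wise interpolation estimates apply despite the merely piecewise global regularity. Using the embedding $\Wsobpw{N,q}{\curl;\Dcomp}\hookrightarrow\hscurlbfpw{k}{\Dcomp}$ (valid since $k\le N$ and $q>2$ on a bounded domain) and summing the element-wise N\'ed\'elec estimates over $\tau_{h_i}$,
\[
\norm[\hcurlbf{\Dcomp}]{\widetilde\bE_\bT-\Pi_{h_i}\widetilde\bE_\bT}\le Ch_i^k\norm[\hscurlbfpw{k}{\Dcomp}]{\widetilde\bE_\bT}\le Ch_i^k\norm[\Wsobpw{N,q}{\curl;\Dcomp}]{\widetilde\bE_\bT},
\]
and the regularity bound~\eqref{eq:hnq_curl_ET}, together with the elementary domination $\norm[\Wsob{N-1,q}{\div;\D_H}]{\bJ}\le C\norm[\Wsob{N,q}{\D_H}]{\bJ}$, turns the right-hand side into $Ch_i^k\norm[\Wsob{N,q}{\D_H}]{\bJ}$ with $C$ uniform over $\frakT$.

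The last, and most delicate, contribution is the quadrature-consistency term of $\mathrm{A}_{1,\bT,i}$ evaluated at $\Pi_{h_i}\widetilde\bE_\bT$. Lemma~\ref{lemma:bilerror} (whose exactness hypotheses are again guaranteed by Assumption~\ref{ass:Q}) bounds it by $Ch_i^N\norm[\hscurlbfpw{N}{\Dcomp}]{\Pi_{h_i}\widetilde\bE_\bT}$, and the point to watch is that this is a full $W^{N,2}$-type norm of a \emph{discrete} function, which a naive inverse estimate would control only at the cost of an $h_i^{-N}$ factor, destroying the rate. The resolution is that $\Pi_{h_i}\widetilde\bE_\bT$ is piecewise polynomial of degree $k\le N$, so all element-wise derivatives of order exceeding $k$ vanish and $\norm[\hscurlbfpw{N}{\Dcomp}]{\Pi_{h_i}\widetilde\bE_\bT}=\norm[\hscurlbfpw{k}{\Dcomp}]{\Pi_{h_i}\widetilde\bE_\bT}$; stability of the interpolant in this top-order broken norm (again element-wise, using the subdomain-resolving mesh) bounds this by $C\norm[\hscurlbfpw{k}{\Dcomp}]{\widetilde\bE_\bT}\le C\norm[\Wsobpw{N,q}{\curl;\Dcomp}]{\widetilde\bE_\bT}$. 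Hence the consistency term is at most $Ch_i^N\norm[\Wsob{N,q}{\D_H}]{\bJ}\le Ch_i^k\norm[\Wsob{N,q}{\D_H}]{\bJ}$, once more using $N\ge k$.

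Summing the three contributions and invoking the $\bT$-uniformity of all constants---which rests on the compactness of $\frakT$, exactly as in Theorem~\ref{thm:unif_reg}---yields~\eqref{eq:str_main_claim}. I expect the control of $\norm[\hscurlbfpw{N}{\Dcomp}]{\Pi_{h_i}\widetilde\bE_\bT}$, namely reconciling the high-order norm demanded by the bilinear quadrature estimate with the limited polynomial degree and with the desired $h_i^k$ rate, to be the main obstacle; everything else is a routine, if careful, combination of the preceding lemmas.
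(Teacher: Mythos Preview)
Your proposal is correct and follows essentially the same route as the paper: Strang's Lemma with the N\'ed\'elec interpolant as competitor, the two consistency Lemmas~\ref{lemma:rhserror} and~\ref{lemma:bilerror}, and the piecewise regularity from Theorem~\ref{thm:unif_pw_reg} together with $k\le N$. The only cosmetic difference is in handling $\norm[\hscurlbfpw{N}{\Dcomp}]{\Pi_{h_i}\widetilde\bE_\bT}$: the paper invokes element-wise $H^N$-stability of the interpolant directly (citing \cite[Lem.~5.48, Thm.~5.41]{Monk:2003aa}), whereas you first observe that element-wise derivatives of order exceeding $k$ vanish and then use stability in the $H^k$-broken norm---both arguments are equivalent and yield the same bound.
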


\begin{proof}
Fix $i\in\IN$ and $\bT\in\frakT$.
Theorems \ref{thm:unif_pw_reg} and \ref{thm:disc} ensure the existence of unique
solutions $\widetilde\bE_{\bT}$ and $\widetilde\bE_{\bT,h_i}$ for each $i\in\IN$ of Problems
\ref{prob:weak_comp_T} and \ref{prob:disc_var_ref},
respectively. Furthermore, Theorem \ref{thm:unif_pw_reg} also states the
piecewise smoothness of the solution of Problem \ref{prob:weak_comp_T}, i.e.,
$\widetilde\bE_{\bT}\in\Wsobpw{N,q}{\curl;\Dcomp}$, with the bound
\begin{align}\label{eq:str_eq_1}
\norm[\Wsobpw{N,q}{\curl;\Dcomp}]{\widetilde\bE_\bT}\le C
\norm[\Wsob{N-1,q}{\div;\D_H}]{\bJ},
\end{align}
where the constant $C>0$ is independent of $\bT\in\frakT$. Note that
our assumption that $\bJ\in\Wsob{N,q}{\D_H}$ in Assumption \ref{ass:Q}
implies that $\bJ\in\Wsob{N-1,q}{\div;\D_H}$. Moreover,
since $q>3$ we have that $\widetilde\bE_{\bT}\in\hscurlbfpw{N}{\Dcomp}$
with 
\begin{align}\label{eq:str_eq_2}
\norm[\hscurlbfpw{N}{\Dcomp}]{\widetilde\bE_\bT}\le C
\norm[\Wsobpw{N,q}{\curl;\Dcomp}]{\widetilde\bE_\bT},
\end{align}
for $C>0$ independent of $\bT\in\frakT$.
Now, let $\cI^c_k:\hscurlbf{N}{\Dcomp}\to\bm{P}^c_k(\tau_{h_i})$ be the canonical
curl-conforming interpolation operator
(\emph{cf.}~\cite[Sec.~5.5]{Monk:2003aa}), which is a bounded operator
in the $\hcurl{\Dcomp}$-norm in $\bm{P}^c_k(\tau_{h_i})$ for any
$N\in\IN$ (see \cite[Lemma 5.38]{Monk:2003aa}). Lemmas
\ref{lemma:strang} (Strang's Lemma), \ref{lemma:rhserror} and
\ref{lemma:bilerror} then yield,
\begin{align}\label{eq:str_eq_3}
\begin{aligned}
&\norm[\hcurlbf{\Dcomp}]{\widetilde\bE_\bT-\widetilde\bE_{\bT,h_i}}\\
&\leq C\left[
\norm[\hcurlbf{\Dcomp}]{\widetilde\bE_\bT-\cI^c_k(\widetilde\bE_{\bT})}+h_i^N\left(\norm[\hscurlbfpw{N}{\Dcomp}]{\cI^c_k(\widetilde\bE_{\bT})}+\vert\Dcomp\vert^{\half-\frac{1}{q}}\norm[\Wsob{N,q}{\D_H}]{\bJ}\right)\right],
\end{aligned}
\end{align}
where $C>0$ may be chosen to be independent of both $i\in\IN$
and $\bT\in\frakT$. Since the approximation and continuity properties of
$\cI^c_k$ hold on each mesh element $K\in\tau_{h_i}$
(\emph{cf.}~\cite[Lem.~5.48, Thm.~5.41 \& Rmk.~5.42]{Monk:2003aa}),
they also hold on $\Dnul_j$ for each $j\in\{1,\hdots,n\}$ by virtue of Assumption
\ref{ass:computational_domain}, so that
\begin{align}\label{eq:str_eq_4}
\begin{aligned}
\norm[\hcurlbf{\Dcomp}]{\widetilde\bE_\bT-\cI^c_k(\widetilde\bE_{\bT})}&\leq
ch_i^k\norm[\hscurlbfpw{k}{\Dcomp}]{\widetilde\bE_\bT},\\
\norm[\hscurlbfpw{N}{\Dcomp}]{\cI^c_k(\widetilde\bE_{\bT})}&\leq 
c\norm[\hscurlbfpw{N}{\Dcomp}]{\widetilde\bE_{\bT}},
\end{aligned}
\end{align}
where $c>0$ is, once again, independent of $i\in\IN$ and $\bT\in\frakT$.
A combination of the estimates in \eqref{eq:str_eq_1}, \eqref{eq:str_eq_2},
\eqref{eq:str_eq_3} and \eqref{eq:str_eq_4} yields,
\begin{align}
&\norm[\hcurlbf{\Dcomp}]{\widetilde\bE_\bT-\widetilde\bE_{\bT,h_i}}\\
&\leq C\left[
h_i^k\norm[\hscurlbfpw{k}{\Dcomp}]{\widetilde\bE_\bT}+h_i^N\left(\norm[\hscurlbfpw{N}{\Dcomp}]{\widetilde\bE_{\bT}}+\vert\Dcomp\vert^{\half-\frac{1}{q}}\norm[\Wsob{N,q}{\D_H}]{\bJ}\right)\right]\\
&\leq C(1+\vert\Dnul\vert^{\half-\frac{1}{q}})
h_i^k\norm[\Wsob{N,q}{\D_H}]{\bJ}
\end{align}
where the constant $C>0$ is not necessarily the same in each appearance
but is always independent of $i\in\IN$ and $\bT\in\frakT$.
\end{proof}

\section{Parametric solutions}
\label{sec:ParamSol}
We now introduce a rigorous framework to treat domain uncertainties
described by infinite-dimensional parametrizations of admissible
perturbations $\frakT$. Proving convergence rates for the
approximation of the parametric solution will require smoothness
results with respect to the parameters. In particular, we recall shape
holomorphy results %
\cite{Jerez-Hanckes:2017aa} and also discuss higher-order spatial
regularity of these extensions. As such, the current section serves as
preparation for the subsequent convergence results.

\subsection{Admissible parameters}
We shall allow the perturbations $\bT\in\frakT$
to be given through a random variable with values in the compact set
$\frakT\Subset\bm\cC^{N,1}(\Dnul)$ (see Assumption
\ref{ass:material_trafo_smooth}). 
Let, in the following,
\begin{equation}\label{eq:sZ}
{Z_N}:=\bm\cC^{N,1}(\Dnul;\IC^{3}),
\end{equation}
and
\begin{equation}\label{eq:Z}
Z:=\bm\cC^{0,1}(\Dnul;\IC^{3}).
\end{equation}
Note that we have continuous embedding
${Z_N}\hookrightarrow Z$. Throughout, elements of ${Z_N}$ will always
additionally be interpreted to belong to $Z$ without distinction in the
notation. Moreover, and as in \cite{AJSZ20}, we will also require the
data $\eps$, $\mu$ and $\bJ$ to possess holomorphic extensions to
an open set in $\IC^3$ containing the hold-all domain $\D_H$. 
We shall work under the following 
assumption on $\frakT$ and on the data.

\begin{assumption}\label{ass:param}
  There exists an open set $O_{\D_H}\subset\IC^3$, such that
  $\D_H\subset O_{\D_H}$, and holomorphic extensions of $\epsilon$,
  $\mu$ and $\bJ$ (for which we use the same notation) to $O_{\D_H}$
  satisfying, for some $\theta\in\IR$, $\alpha>0$ and $\alpha_s>0$,
  the following bounds:

\begin{gather}
\label{eq:alpha_holdall_m}
\inf_{0\neq \bzeta\in\C^3}
\essinf_{\bx\in O_{\D_H}}\;
\min\left\{
\frac{\Re(\bzeta^\top
e^{\ii\theta}\mu(\bx)^{-1}\bar\bzeta)}{\norm[\C^3]{\bzeta}^2},
\frac{-\Re(\bzeta^\top
e^{\ii\theta}\omega^2\eps(\bx)\bar\bzeta)}{\norm[\C^3]{\bzeta}^2}\right\}\ge\alpha,\\
\inf_{0\neq \bzeta\in\C^3}
\essinf_{\bx\in O_{\D_H}}\;
\min\left\{
\frac{\Re(\bzeta^\top
\mu(\bx)\bar\bzeta)}{\norm[\C^3]{\bzeta}^2},
\frac{\Re(\bzeta^\top
\eps(\bx)\bar\bzeta)}{\norm[\C^3]{\bzeta}^2}\right\}\ge\alpha_s.
\end{gather}
\end{assumption}

\subsection{Holomorphic extension in $\hncurlbf{\Dcomp}$}
We now show that the solution map possesses certain holomorphic
extensions. The term \emph{solution map} here refers to the function
mapping each perturbation
$\bT\in\frakT$ to the solution of either Problem
\ref{prob:weak_comp_T} or \ref{prob:disc_var_ref}. By \emph{holomorphic}
we mean that this map is complex Fr\'echet differentiable as a function
between two complex Banach spaces.

Before proceeding, we rewrite the $\bT\in\frakT$-dependent quantities
in the sesquilinear and antilinear forms defining Problems
\ref{prob:weak_comp_T} and \ref{prob:disc_var_ref} so that the
dependence on $\bT\in\frakT$ is made explicit.
Then, with the notation introduced in Remark \ref{rmk:equiv}, it holds that
\begin{gather}\label{eq:mu_Lam_J_tildT}
\begin{gathered}
\mu_{\widetilde\bT}
=\det(\d\!\widehat\bT)\d\!\widehat\bT^{-1}\det(\d\!\bT\circ\widehat\bT)(\d\!\bT\circ\widehat\bT)^{-1}(\mu\circ\bT\circ\widehat\bT)(\d\!\bT\circ\widehat\bT)^{-\top}\d\!\widehat\bT^{-\top},\\
\eps_{\widetilde\bT}
=\det(\d\!\widehat\bT)\d\!\widehat\bT^{-1}\det(\d\!\bT\circ\widehat\bT)(\d\!\bT\circ\widehat\bT)^{-1}(\eps\circ\bT\circ\widehat\bT)(\d\!\bT\circ\widehat\bT)^{-\top}\d\!\widehat\bT^{-\top},\\
\bJ_{\widetilde\bT}=\det(\d\!\widehat\bT)\d\!\widehat\bT^{-1}\det(\d\!\bT\circ\widehat\bT)(\d\!\bT\circ\widehat\bT)^{-1}(\bJ\circ\bT\circ\widehat\bT).
\end{gathered}
\end{gather}

The structure of the coefficients in \eqref{eq:mu_Lam_J_tildT} 
is only slightly different from the structure of the 
coefficients considered in \cite[Sec.~4]{AJSZ20}. 
Specifically, the coefficients only differ
by the composition with the fixed transformation
$\widehat{\bT}:\Dcomp\to\Dnul$ and by the product with
$\widehat{\bT}$-depending quantities. Therefore, the proofs of
Theorems \ref{thm:holcont} and \ref{thm:holdisc}, establishing
holomorphic extensions of the continuous and discrete solution
maps. Indeed, mapping each $\bT\in\frakT$ to the solutions of Problems
\ref{prob:weak_comp_T} and \ref{prob:disc_var_ref} are only slight
variations of the proofs of Theorems 4.5 and 4.8 in \cite{AJSZ20} and
are omitted for brevity.
\subsubsection{Exact solution}
\label{sec:ExSol}
\begin{theorem}
\label{thm:holcont}
Let Assumptions \ref{ass:material_trafo_smooth}, \ref{ass:computational_domain} and
\ref{ass:param} hold. Then, there
exists an open set $O_{\frakT}\subseteq Z$, with
$\frakT\subseteq O_{\frakT}$, and a holomorphic function
$\widetilde\frakE:O_{\frakT}\to \hncurlbf{\Dcomp}$ such that, for every
$\bT\in O_{\frakT}$, there exists a unique solution
$\widetilde\bE_\bT\in \hncurlbf{\Dcomp}$ of Problem
\ref{prob:weak_comp_T} and $\widetilde\bE_\bT=\widetilde\frakE(\bT)$.
Moreover, it holds that
\begin{equation}\label{eq:exactM}
\sup_{{\bT}\in O_{\frakT}}\norm[\hcurl{\Dcomp}]{\widetilde\frakE(\bT)}
<\infty.
\end{equation}
\end{theorem}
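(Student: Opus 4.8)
The plan is to exhibit $\widetilde\frakE$ as a composition of holomorphic maps and to invoke the fact that operator inversion is holomorphic. For $\bT$ in a complex neighborhood of $\frakT$ in $Z$, I encode the forms of Problem \ref{prob:weak_comp_T} as a bounded operator $A_\bT\in\cL(\hncurlbf{\Dcomp};\hncurlbf{\Dcomp}^*)$ and a functional $F_\bT\in\hncurlbf{\Dcomp}^*$ representing $\act{\cdot}{\cdot}$ and $\fct{\cdot}$, so that $\widetilde\bE_\bT=A_\bT^{-1}F_\bT$ whenever $A_\bT$ is boundedly invertible. Since the boundedly invertible operators form an open subset of $\cL(\hncurlbf{\Dcomp};\hncurlbf{\Dcomp}^*)$ on which $A\mapsto A^{-1}$ is holomorphic (with Fr\'echet derivative $B\mapsto -A^{-1}BA^{-1}$), it suffices to establish that (i) $\bT\mapsto(A_\bT,F_\bT)$ is holomorphic on some open $O_\frakT\supseteq\frakT$, and (ii) $A_\bT$ is uniformly boundedly invertible on $O_\frakT$ after possibly shrinking it. Then $\widetilde\frakE:=A_{(\cdot)}^{-1}F_{(\cdot)}$ is holomorphic as a composition of holomorphic maps.

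For (i), I would first verify holomorphy of the coefficient maps $\bT\mapsto\mu_{\widetilde\bT},\eps_{\widetilde\bT},\bJ_{\widetilde\bT}$ of \eqref{eq:mu_Lam_J_tildT}, regarded as maps from $Z$ into $\Lp{\infty}{\Dcomp;\IC^{3\times 3}}$, respectively $\Lp{2}{\Dcomp}$. Each is assembled from elementary holomorphic operations: the bounded linear (hence holomorphic) Jacobian map $\bT\mapsto\d\!\bT$ into $\Lp{\infty}{\Dnul;\IC^{3\times 3}}$; the polynomial map $\bA\mapsto\det(\bA)$; matrix inversion $\bA\mapsto\bA^{-1}$, holomorphic on the open set where $\det\bA$ is bounded away from $0$; and the composition (Nemytskii) operator $\bT\mapsto\mu\circ\bT$, whose holomorphy rests on the holomorphic extension of the data granted by Assumption \ref{ass:param} and on the shape-holomorphy results of \cite{Jerez-Hanckes:2017aa,AJSZ20}. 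The only structural novelty relative to \cite[Sec.~4]{AJSZ20} is the fixed inner map $\widehat\bT$ and the fixed cofactor factors $\det(\d\!\widehat\bT)$, $\d\!\widehat\bT^{-1}$, $\d\!\widehat\bT^{-\top}$; as $\widehat\bT$ is a fixed bi-Lipschitz transformation and these are fixed elements of $\Lp{\infty}{\Dcomp}$, they act as fixed bounded multipliers and a fixed composition, neither of which affects holomorphy. Since products and compositions of holomorphic maps are holomorphic, $\bT\mapsto(A_\bT,F_\bT)$ is holomorphic.

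For (ii), I would argue by perturbation off the real transformations. On the compact set $\frakT\Subset Z_N\hookrightarrow Z$, the argument of Theorem \ref{thm:cont}, transported to $\Dcomp$ exactly as in Theorem \ref{thm:unif_pw_reg}, yields coercivity of $e^{\ii\theta}\act{\cdot}{\cdot}$ with a lower bound uniform in $\bT\in\frakT$, hence uniform bounded invertibility of $A_\bT$ there. Because $\bT\mapsto A_\bT$ is continuous and the invertible operators form an open set with locally bounded inverse via the Neumann series---$\|A_\bT^{-1}\|$ stays controlled so long as $\|A_\bT-A_{\bT_0}\|\,\|A_{\bT_0}^{-1}\|<1$---each $\bT_0\in\frakT$ admits a ball on which $A_\bT$ is invertible with $\|A_\bT^{-1}\|$ at most, say, twice $\|A_{\bT_0}^{-1}\|$. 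Covering $\frakT$ by finitely many such balls and taking their union gives the desired open $O_\frakT\supseteq\frakT$ on which $A_\bT$ is uniformly boundedly invertible; shrinking $O_\frakT$ so that the continuous map $\bT\mapsto\norm[\hncurlbf{\Dcomp}^*]{F_\bT}$ stays bounded on it, the estimate $\norm[\hcurl{\Dcomp}]{\widetilde\frakE(\bT)}\le\|A_\bT^{-1}\|\,\norm[\hncurlbf{\Dcomp}^*]{F_\bT}$ yields the uniform bound \eqref{eq:exactM}.

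The hard part is the holomorphy of the composition operator $\bT\mapsto\mu\circ\bT$ (and its analogues for $\eps$ and $\bJ$): for complex-valued $\bT$ near a real transformation the image $\bT(\Dnul)$ escapes $\IR^3$, so the data must be evaluated on a complex neighborhood, which is exactly why the holomorphic extension to $O_{\D_H}$ in Assumption \ref{ass:param} is indispensable. Once this Nemytskii holomorphy is available---precisely the shape-holomorphy content recalled from \cite{Jerez-Hanckes:2017aa,AJSZ20}---the remaining steps are the soft functional-analytic facts above, so that, as noted in the text, the argument is a direct adaptation of \cite[Thm.~4.5]{AJSZ20}.
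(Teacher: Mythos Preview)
Your proposal is correct and follows essentially the same route the paper takes: the paper omits the proof, noting it is a slight variation of \cite[Thm.~4.5]{AJSZ20} with the only change being the fixed composition with $\widehat\bT$ and the fixed $\widehat\bT$-dependent factors in \eqref{eq:mu_Lam_J_tildT}, and that the uniform bound \eqref{eq:exactM} follows by shrinking $O_\frakT$ around the compact $\frakT$. Your sketch---holomorphy of the coefficient maps (hinging on the Nemytskii holomorphy enabled by Assumption~\ref{ass:param}), holomorphy of operator inversion, and a compactness/Neumann-series covering argument for uniform invertibility and the bound---is precisely that argument.
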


The bound in \eqref{eq:exactM} is not stated explicitly in
\cite[Thm.~4.5]{AJSZ20}, but %
can be achieved by choosing the open superset
  $O_{\frakT}$ of the compact set $\frakT$ small
  enough
(as in \cite[Thm.~4.8]{AJSZ20}).

\subsubsection{Discrete solution}
Using the discrete holomorphy result \cite[Thm.~4.8]{AJSZ20},
we obtain a discrete version of Theorem \ref{thm:holcont}.  Recall that
$k$ denotes the fixed polynomial degree of our approximation spaces
introduced in Section \ref{ssec:fe}.

\begin{theorem}[Theorem 4.8 in \cite{AJSZ20}]\label{thm:holdisc}
Let the assumptions of Theorem \ref{thm:holcont}, as well as Assumptions \ref{ass:tau} and \ref{ass:Q} hold.
Then, there exists an open set $O_{\frakT}\subseteq Z$ independent
of the mesh $\tau_h\in\{\tau_{h_i}\}_{i\in\IN}$, with
$\frakT\subseteq O_{\frakT}$,
and holomorphic functions
$\widetilde\frakE_{h}:O_{\frakT}\to \bm{P}^c_k(\tau_h)$ such that, for every
$\bT\in O_{\frakT}$, there exists a unique solution
$\widetilde\bE_{\bT,h}\in \bm{P}^c_k(\tau_h)$ of Problem \ref{prob:disc_var_ref}
and $\widetilde\bE_{\bT,h}=\widetilde\frakE_h(\bT)$. Moreover,
\begin{equation}\label{eq:exactM_h}
\sup_{\bT\in O_{\frakT}}\norm[\hcurlbf{\Dcomp}]{\widetilde\frakE_h(\bT)} <\infty.
\end{equation}
\end{theorem}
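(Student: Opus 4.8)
The plan is to follow the strategy of \cite[Thm.~4.8]{AJSZ20}, recasting Problem \ref{prob:disc_var_ref} as an operator equation on the fixed, finite-dimensional space $\bm{P}^c_k(\tau_h)$ and then transferring holomorphy from the coefficients to the solution through the holomorphy of operator inversion. Concretely, I would define $\cA_h(\bT)\in\cL(\bm{P}^c_k(\tau_h);\bm{P}^c_k(\tau_h)^*)$ and $\cF_h(\bT)\in\bm{P}^c_k(\tau_h)^*$ by $\dup{\cA_h(\bT)\widetilde\bU_h}{\widetilde\bV_h}=\aht{\widetilde\bU_h}{\widetilde\bV_h}$ and $\dup{\cF_h(\bT)}{\widetilde\bV_h}=\fht{\widetilde\bV_h}$, so that Problem \ref{prob:disc_var_ref} reads $\cA_h(\bT)\widetilde\bE_{\bT,h}=\cF_h(\bT)$. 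If I can show that $\bT\mapsto\cA_h(\bT)$ and $\bT\mapsto\cF_h(\bT)$ are holomorphic on a complex neighbourhood $O_{\frakT}\subseteq Z$ of $\frakT$ and that $\cA_h(\bT)$ is boundedly invertible there, then $\widetilde\frakE_h(\bT):=\cA_h(\bT)^{-1}\cF_h(\bT)$ is holomorphic, since the inverse of a holomorphically varying, boundedly invertible operator family is again holomorphic (locally, via a Neumann series). Uniqueness of the discrete solution then identifies $\widetilde\frakE_h(\bT)$ with $\widetilde\bE_{\bT,h}$.

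The main content is the holomorphy of the coefficient-to-parameter maps. Because $\bm{P}^c_k(\tau_h)$ is finite-dimensional and the forms are assembled by the quadratures $Q^\bullet_K$, every entry of $\cA_h(\bT)$ and $\cF_h(\bT)$ is a finite, fixed linear combination of pointwise values of $\mu_{\widetilde\bT}^{-1}$, $\eps_{\widetilde\bT}$ and $\bJ_{\widetilde\bT}$ at the quadrature nodes $\bm{b}_{l,K}$, weighted by fixed quantities built from the polynomial basis. It therefore suffices to establish holomorphy of $\bT\mapsto(\mu_{\widetilde\bT}^{-1},\eps_{\widetilde\bT},\bJ_{\widetilde\bT})$ on $O_{\frakT}$ into a space for which these node evaluations are bounded. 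Here I would invoke exactly the argument of \cite[Sec.~4]{AJSZ20}: by Assumption \ref{ass:param} the data $\eps,\mu,\bJ$ admit holomorphic extensions to $O_{\D_H}$, and the shape-holomorphy results (\emph{cf.}~\cite{Jerez-Hanckes:2017aa,AJSZ20}) give that $\bT\mapsto\mu\circ\bT$, $\bT\mapsto\eps\circ\bT$ and $\bT\mapsto\bJ\circ\bT$ are holomorphic in the $Z=\bm\cC^{0,1}$ topology; determinants, matrix inverses and products are holomorphy-preserving wherever $\det\d\!\bT$ stays bounded away from zero, which holds on a neighbourhood of $\frakT$ by Assumption \ref{ass:material_trafo_smooth}. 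Comparing with \eqref{eq:mu_Lam_J_tildT}, the only difference from the coefficients treated in \cite{AJSZ20} is the \emph{fixed} pre-composition with $\widehat\bT$ and multiplication by the \emph{fixed} $\widehat\bT$-dependent factors $\det(\d\!\widehat\bT)\d\!\widehat\bT^{-1}$ and $\d\!\widehat\bT^{-\top}$; since $\widehat\bT$ does not depend on $\bT$, these operations preserve holomorphy, so the modified coefficients are holomorphic as well.

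It remains to secure bounded invertibility of $\cA_h(\bT)$ on a mesh-independent neighbourhood. On $\frakT$, Theorem \ref{thm:disc} furnishes discrete coercivity with a constant $C\alpha$ independent of both the mesh-size and $\bT\in\frakT$; equivalently, $\cA_h(\bT)^{-1}$ is uniformly bounded. By the continuity (a consequence of the holomorphy just established) of $\bT\mapsto\cA_h(\bT)$ and the uniformity of these constants in the mesh, the coercivity persists---with a slightly reduced constant---on a complex neighbourhood $O_{\frakT}\subseteq Z$ that can be chosen \emph{independent} of $\tau_h\in\{\tau_{h_i}\}_{i\in\IN}$, exactly as in \cite[Thm.~4.8]{AJSZ20}. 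This yields existence and uniqueness of $\widetilde\bE_{\bT,h}$ for every $\bT\in O_{\frakT}$ together with the holomorphy of $\widetilde\frakE_h$. For the uniform bound \eqref{eq:exactM_h}, I would shrink $O_{\frakT}$ so that its closure is relatively compact and contained in the domain of holomorphy; the bound then follows from the continuity of the holomorphic map together with the uniform invertibility of $\cA_h$.

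The step I expect to be the genuine obstacle is the holomorphy of the composition operators $\bT\mapsto\mu\circ\bT$ (and the associated Jacobian-dependent factors) in the weak $\bm\cC^{0,1}$ topology: the formal chain-rule derivative $(\nabla\mu\circ\bT)(\,\cdot\,)$ threatens a loss of regularity, and reconciling the \emph{pointwise} quadrature evaluations with a function-space framework in which $\d\!\bT$ is only essentially bounded is delicate. This is precisely the technical heart of the shape-holomorphy machinery carried out in \cite{AJSZ20,Jerez-Hanckes:2017aa}; once it is in place, the remainder is a routine, holomorphy-preserving bookkeeping of determinants, inverses and the fixed factors introduced by $\widehat\bT$.
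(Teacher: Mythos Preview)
Your proposal is correct and matches the paper's approach: the paper does not give a proof but explicitly states that Theorems \ref{thm:holcont} and \ref{thm:holdisc} are ``only slight variations of the proofs of Theorems 4.5 and 4.8 in \cite{AJSZ20}'' because the coefficients in \eqref{eq:mu_Lam_J_tildT} differ from those in \cite{AJSZ20} merely by composition with the fixed $\widehat\bT$ and multiplication by fixed $\widehat\bT$-dependent factors, which is precisely the adaptation you outline. Your identification of the shape-holomorphy of the composition maps as the technical heart, to be imported from \cite{AJSZ20,Jerez-Hanckes:2017aa}, is exactly what the paper intends.
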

\begin{remark}
The fact that we may take the open set $O_{\frakT}$ as a subset of $Z$ in
Theorems \ref{thm:holcont} and \ref{thm:holdisc}
follows from Remarks \ref{rmk:lip_assumption} and \ref{rmk:lip_assumption_dis}.
\end{remark}

\subsection{Extension in \texorpdfstring{$\Wsobpw{N,q}{\curl;\Dcomp}$}{W}}

In addition to establishing existence of holomorphic extensions in
$\hncurlbf{\Dcomp}$ with quantified  bounds on the size of the holomorphy domains,  proving dimension-independent convergence rate
bounds of multilevel algorithms requires higher order spatial regularity. 
For our analysis it will suffice that the solution map allows an extension
as a mapping to $\Wsobpw{N,q}{\curl;\Dcomp}$, but we shall not require
holomorphy with respect to this topology.

\subsubsection{Exact Solution}

\begin{proposition}
\label{prop:contcont}
Let Assumptions \ref{ass:material_trafo_smooth}, \ref{ass:computational_domain} and \ref{ass:param} hold. Then, with $O_{\frakT}\subseteq Z$ and $\widetilde\frakE$ as in Theorem \ref{thm:holcont}, there
exists an open set $O_{{N},\frakT}\subset{Z_N}$ with
$\frakT\subseteq O_{{N},\frakT}\subseteq O_{\frakT}$ such that for every $\bT\in O_{{N},\frakT}$ it holds that $\widetilde\frakE(\bT)\in\Wsobpw{N,q}{\curl;\Dnul}$, where $N\in\IN$ and $q>3$ are as in Assumption \ref{ass:Q}. 
In addition, one has
\begin{equation}\label{eq:exact_smooth}
\sup_{\bT\in O_{{N},\frakT}}\norm[\Wsobpw{N,q}{\curl;\Dcomp}]{\widetilde\frakE(\bT)} <\infty.
\end{equation}
\end{proposition}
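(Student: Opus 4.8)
The plan is to avoid re-establishing holomorphy in the stronger topology and instead to upgrade the already-constructed map $\widetilde\frakE\colon O_{\frakT}\to\hncurlbf{\Dcomp}$ of Theorem \ref{thm:holcont} to take values in $\Wsobpw{N,q}{\curl;\Dcomp}$ on a smaller $Z_N$-neighborhood of $\frakT$, by a perturbation-of-isomorphism argument anchored at the real transformations. The starting observation is that for every \emph{real} $\bT_0\in\frakT$ the conclusion is already available: Theorem \ref{thm:unif_pw_reg} (via Theorem \ref{thm:unif_reg} and Lemma \ref{lemma:regT}) shows that the solution operator associated with $\bT_0$ maps the data boundedly into $\Wsobpw{N,q}{\curl;\Dcomp}$, and its proof verifies precisely the real-structure hypotheses of the Alberti regularity result, Theorem \ref{thm:reg}. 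The idea is to transport this high-regularity solvability from each $\bT_0\in\frakT$ to nearby \emph{complex} $\bT$ purely by continuity, so that the real-coefficient hypotheses of Theorem \ref{thm:reg} never have to be checked for complex-valued transformations.

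Concretely, first I would fix the Banach spaces $X_N:=\hncurlbf{\Dcomp}\cap\Wsobpw{N,q}{\curl;\Dcomp}$ and a data space $Y_N$ built from $\Wsobpw{N-1,q}{\div;\Dcomp}$, and introduce the forward operator $\mathcal{B}_\bT\in\mathcal{L}(X_N;Y_N)$ encoding the second-order form of Problem \ref{prob:weak_comp_T}, so that $\mathcal{B}_\bT\widetilde\bE_\bT=-\ii\omega\bJ_{\widetilde\bT}$. Using the holomorphic extensions of $\eps$, $\mu$ and $\bJ$ from Assumption \ref{ass:param}, the compositions $\mu\circ\widetilde\bT$, $\eps\circ\widetilde\bT$, $\bJ\circ\widetilde\bT$ remain well-defined for $\bT$ in a $Z_N$-neighborhood of $\frakT$ (whose image stays inside $O_{\D_H}$), and the chain- and product-rule estimates already used in Theorem \ref{thm:unif_reg} and in Lemmas \ref{lemma:rhserror}, \ref{lemma:bilerror} show that $\bT\mapsto\mu_{\widetilde\bT}^{-1},\eps_{\widetilde\bT}$ are continuous into $\Wsobpw{N,\infty}{\Dcomp}$ and $\bT\mapsto\bJ_{\widetilde\bT}$ into $\Wsobpw{N-1,q}{\div;\Dcomp}$. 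Since $\det(\d\!\widetilde\bT)$ stays bounded away from zero and $\mu$ stays invertible for $\bT$ near $\frakT$, these expressions are meaningful, and consequently $\bT\mapsto\mathcal{B}_\bT$ is continuous from $Z_N$ into $\mathcal{L}(X_N;Y_N)$ while $\bT\mapsto\bJ_{\widetilde\bT}$ is locally bounded in $Y_N$.

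Next I would exploit that for each real $\bT_0\in\frakT$ the operator $\mathcal{B}_{\bT_0}\colon X_N\to Y_N$ is an \emph{isomorphism}: injectivity follows from uniqueness of weak solutions, while surjectivity and the bounded inverse are the content of the a priori bound \eqref{eq:hnq_curl_ET} combined with the weak well-posedness of Theorem \ref{thm:holcont} at $\bT_0$. Because the invertible operators form an open set in $\mathcal{L}(X_N;Y_N)$ on which inversion is continuous, the continuity of $\bT\mapsto\mathcal{B}_\bT$ yields, for each $\bT_0\in\frakT$, a $Z_N$-ball around $\bT_0$ on which $\mathcal{B}_\bT$ remains an isomorphism with a uniformly bounded inverse, equivalently a convergent Neumann series $\mathcal{B}_\bT^{-1}=\sum_m\big(\mathcal{B}_{\bT_0}^{-1}(\mathcal{B}_{\bT_0}-\mathcal{B}_\bT)\big)^m\mathcal{B}_{\bT_0}^{-1}$. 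Covering the compact set $\frakT\Subset Z_N$ by finitely many such balls and intersecting the union with $O_{\frakT}$ produces the desired open set $O_{N,\frakT}\subset Z_N$ with $\frakT\subseteq O_{N,\frakT}\subseteq O_{\frakT}$; on it both $\bT\mapsto\mathcal{B}_\bT^{-1}$ and $\bT\mapsto\bJ_{\widetilde\bT}$ are uniformly bounded, so $\mathcal{B}_\bT^{-1}(-\ii\omega\bJ_{\widetilde\bT})\in X_N$ is uniformly bounded, which is exactly \eqref{eq:exact_smooth}. Finally, since this $X_N$-element is in particular a weak solution in $\hncurlbf{\Dcomp}$, uniqueness of the weak solution (Theorem \ref{thm:holcont}) identifies it with $\widetilde\frakE(\bT)$.

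The main obstacle is the correct functional-analytic packaging in the third step: one must choose $X_N$ and $Y_N$ so that $\mathcal{B}_{\bT_0}$ is a genuine isomorphism and not merely injective with closed range. This is delicate because $\Wsobpw{N,q}{\curl;\Dcomp}$ encodes only piecewise smoothness across the interfaces $\partial\Dcomp_j$, whereas $\hncurlbf{\Dcomp}$ enforces global $\boldsymbol{H}(\curl;\Dcomp)$-conformity and the PEC condition; the data space $Y_N$ must therefore be compatible both with the $\curl$-range structure of the second-order operator and with the transmission conditions inherited from Assumption \ref{ass:computational_domain}. Once these spaces are fixed so that Theorem \ref{thm:unif_pw_reg} reads as the bounded invertibility of $\mathcal{B}_{\bT_0}$, the perturbation and compactness steps are routine, and crucially the real-coefficient hypotheses of Theorem \ref{thm:reg} are invoked only at the real anchors $\bT_0\in\frakT$.
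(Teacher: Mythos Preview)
Your strategy is sound but takes a heavier route than the paper. The paper observes that Alberti's Theorem~\ref{thm:reg} already accommodates complex tensor coefficients: its hypotheses are the real-part positivity conditions $\Re(\bzeta^\top\mu\bar\bzeta)\ge\alpha_s$, $\Re(\bzeta^\top\eps\bar\bzeta)\ge\alpha_s$ together with $W^{N,q}$-membership of $\eps,\mu$. These were verified for the transformed coefficients $\eps_\bT,\mu_\bT$ at every $\bT\in\frakT$ in the proof of Theorem~\ref{thm:unif_reg}. The paper then simply notes that both these positivity inequalities \emph{and} the rotated coercivity from Assumption~\ref{ass:param} are open conditions in the $Z_N$-topology, so each $\bT\in\frakT$ has a bounded $Z_N$-ball $N_\bT$ on which they hold with halved constants; a finite subcover of the compact $\frakT$ gives $O_{N,\frakT}$, and Theorem~\ref{thm:unif_pw_reg} then applies verbatim to every $\bT\in O_{N,\frakT}$ with a uniform constant, yielding \eqref{eq:exact_smooth} directly. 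No isomorphism framework, no Neumann series.

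What your approach buys is robustness: by anchoring the high-regularity solvability only at real $\bT_0\in\frakT$ and transporting it by perturbation of isomorphisms, you never need to verify the structural hypotheses of the regularity theorem at complex $\bT$. This would matter if Theorem~\ref{thm:reg} carried conditions that genuinely fail under complexification of the transformation. What it costs is exactly the obstacle you name: fixing $Y_N$ so that $\mathcal{B}_{\bT_0}:X_N\to Y_N$ is \emph{onto}, not merely bounded below. This is resolvable---work on the smooth nominal domain $\Dnul$ with $Y_N=\Wsob{N-1,q}{\div;\Dnul}$, where Theorem~\ref{thm:reg} furnishes surjectivity for every right-hand side in that space, and then transfer to $\Dcomp$ via Lemma~\ref{lemma:regT}---but the paper sidesteps the whole construction by recognizing that the hypotheses of Theorem~\ref{thm:reg} are themselves stable under small $Z_N$-perturbations.
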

\begin{proof}
In the proof of Theorem \ref{thm:unif_reg} (see \eqref{eq:ellip_alphas}),
we showed that for every $\bT\in\frakT$ and $\bx\in\D_H$ 
there holds that
\begin{align}
\Re(\bm\zeta^\top\eps_{\bT}(\bx)\bm\zeta)\geq\alpha_s\norm[\IC^3]{\d\!\bT^{-\top}(\bx)\bm\zeta}^2\geq\alpha_s\norm[\IC^{3\times 3}]{\d\!\bT(\bx)}^{-2}\norm[\IC^3]{\bm\zeta}^2\geq c\alpha_s\vartheta^2\norm[\IC^3]{\bm\zeta}^2,\\
\Re(\bm\zeta^\top\mu_{\bT}(\bx)\bm\zeta)\geq\alpha_s\norm[\IC^3]{\d\!\bT^{-\top}(\bx)\bm\zeta}^2\geq\alpha_s\norm[\IC^{3\times 3}]{\d\!\bT(\bx)}^{-2}\norm[\IC^3]{\bm\zeta}^2\geq c\alpha_s\vartheta^2\norm[\IC^3]{\bm\zeta}^2,
\end{align}
for all $\bT\in\frakT$ and $x\in\D_H$,
where $\alpha_s>0$ is as in Assumption \ref{ass:param}, 
$\vartheta\in (0,1)$ as in Assumption \ref{ass:material_trafo_smooth}, 
and $c>0$ is a constant independent of $\bT\in\frakT$.
An analogous computation shows that 
\begin{align}
-\Re(\bm\zeta^\top e^{\imath\theta}\eps_{\bT}(\bx)\bm\zeta)\geq\alpha\norm[\IC^3]{\d\!\bT^{-\top}(\bx)\bm\zeta}^2\geq\alpha\norm[\IC^{3\times 3}]{\d\!\bT(\bx)}^{-2}\norm[\IC^3]{\bm\zeta}^2\geq c\alpha\vartheta^2\norm[\IC^3]{\bm\zeta}^2,\\
\Re(\bm\zeta^\top e^{\imath\theta}\mu_{\bT}^{-1}(\bx)\bm\zeta)\geq\alpha\norm[\IC^3]{\d\!\bT^{-\top}(\bx)\bm\zeta}^2\geq\alpha\norm[\IC^{3\times 3}]{\d\!\bT(\bx)}^{-2}\norm[\IC^3]{\bm\zeta}^2\geq c\alpha\vartheta^2\norm[\IC^3]{\bm\zeta}^2,
\end{align}
where $\alpha>0$ and $\theta\in\IR$ are as in Assumption \ref{ass:param}
and $\vartheta\in(0,1)$ and $c>0$ are as before.
Under our assumptions, 
we can find a bounded open set around 
each $\bT\in\frakT$, denoted $N_{\bT}$,
such that,
\begin{align}
\label{eq:unif_coerc_hol}
\begin{gathered}
    \inf_{{\textbf{L}}\in
      N_{\bT}} \inf_{0\neq \bzeta\in\C^3} 
    \essinf_{\bx\in\Dnul}\;
    \min\left\{
      \frac{\Re(\bzeta^\top
        e^{\ii\theta}\mu_{{\textbf{L}}}(\widehat\bx)^{-1}\bar\bzeta)}{\norm[\C^3]{\bzeta}^2},
      \frac{-\Re(\bzeta^\top
        e^{\ii\theta}\varepsilon_{{\textbf{L}}}(\widehat\bx)\bar\bzeta)}{\norm[\C^3]{\bzeta}^2}\right\}\ge\tilde\alpha:=\frac{c\alpha\vartheta^3}{2},\\
\inf_{{\textbf{L}}\in
      N_{\bT}} 
      \inf_{0\neq \bzeta\in\C^3}
\essinf_{\bx\in\Dnul}\;
\min\left\{
\frac{\bzeta^\top
\mu_{{\textbf{L}}}(\widehat\bx)\bar\bzeta}{\norm[\C^3]{\bzeta}^2},
\frac{\bzeta^\top
\varepsilon_{{\textbf{L}}}(\widehat\bx)\bar\bzeta}{\norm[\C^3]{\bzeta}^2}\right\}\ge\tilde\alpha_s:=\frac{c\alpha_s\vartheta^3}{2},
\end{gathered}
\end{align}
where, for ${\textbf{L}}\in N_{\bT}$,
$\mu_{{\textbf{L}}}$ and $\eps_{{\textbf{L}}}$ are as in Remark \ref{rmk:equiv}.
The compactness of $\frakT$ implies that we can cover $\frakT$ by a
finite number of such sets $N_{\bT}$, whose union yields an open and
bounded set denoted $O_{{N},\frakT}$ on which there hold the uniform
coercivity conditions in \eqref{eq:unif_coerc_hol}. Decreasing the
open set $O_{{N},\frakT}$ if necessary, together with an application
of Theorem \ref{thm:unif_pw_reg} yields the uniform bound in
  \eqref{eq:exact_smooth} (\emph{cf.}~Theorem \ref{thm:unif_reg} for
  the dependence of the constant $C>0$ in \eqref{eq:hnq_curl_ET} on
  $\bT\in\frakT$.
\end{proof}

\subsubsection{Discrete Solution}
For the discrete solution, Proposition \ref{prop:contcont} and Theorem
\ref{thm:approx} imply the following uniform approximation result.
Its proof results from arguments analogous to those in the proof
of Proposition \ref{prop:contcont}.

\begin{proposition}\label{prop:contdisc}
  Let Assumptions \ref{ass:material_trafo_smooth} through
  \ref{ass:param} hold.  Then, with the holomorphic mappings
  $\widetilde\frakE:\frakT\to\hncurlbf{\Dcomp}$ and
  $\widetilde\frakE_h:\frakT\to\bm{P}^c_k(\tau_h)$ introduced in
  Theorems \ref{thm:holcont} and \ref{thm:holdisc}, respectively,
  there holds that
\begin{equation}\label{eq:holo_conv}
\sup_{{\bT}\in O_{{N},\frakT}}
\norm[\hcurlbf{\Dcomp}]{\widetilde\frakE(\bT)-\widetilde\frakE_{h_i}(\bT)}\le C B_{{N},{\frakT},\bJ} h_i^k,
\end{equation}
with
$$B_{{N},{\frakT},\bJ}:=\sup_{\bT\in O_{{N},{\frakT}}}\norm[\hscurlbfpw{N}{\Dcomp}]{\widetilde\frakE(\bT)}+\norm[\bm{\cC}^{N}(O_{\D_H})]{\bJ},$$
for each $i\in\IN$, 
where $N\in\IN$ and $q>3$ are as in Assumption \ref{ass:material_trafo_smooth}, 
$k\leq N$ is as in Assumption \ref{ass:Q}, $O_{{N},{\frakT}}\subset{Z_N}$
is as in Proposition \ref{prop:contcont} and the constant $C>0$ is independent of the mesh-size.
\end{proposition}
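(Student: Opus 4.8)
The plan is to re-run the discretization-error analysis of Theorem~\ref{thm:approx}, now over the open neighbourhood $O_{N,\frakT}\subseteq Z_N$ constructed in Proposition~\ref{prop:contcont} in place of the compact real set $\frakT$. Every uniform-in-$\bT$ ingredient feeding into Theorem~\ref{thm:approx}---coercivity and continuity of the continuous and discrete forms, the a~priori bounds, the two consistency estimates, the interpolation estimates, and the spatial-regularity bound on the exact solution---must be checked to persist over $O_{N,\frakT}$. This is precisely what Proposition~\ref{prop:contcont}, together with the holomorphic-extension results Theorems~\ref{thm:holcont} and~\ref{thm:holdisc}, supplies, so once these are in place the argument is essentially bookkeeping.

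First I would record that, by the construction of $O_{N,\frakT}$ in Proposition~\ref{prop:contcont}, the rotated and plain coercivity conditions \eqref{eq:unif_coerc_hol} hold uniformly for all $\bT\in O_{N,\frakT}$ with the strictly positive constants $\tilde\alpha,\tilde\alpha_s$. Hence the discrete coercivity and continuity estimates underlying Theorem~\ref{thm:disc}, and thereby the hypotheses of Strang's Lemma (Lemma~\ref{lemma:strang}), are available uniformly on $O_{N,\frakT}$. Since $O_{N,\frakT}\subseteq O_{\frakT}$, Theorems~\ref{thm:holcont} and~\ref{thm:holdisc} guarantee existence and uniqueness of both $\widetilde\frakE(\bT)$ and $\widetilde\frakE_{h_i}(\bT)$ there, so the Strang bound \eqref{eq:strang} holds with a constant independent of $i\in\IN$ and of $\bT\in O_{N,\frakT}$.

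Next I would re-examine the two consistency lemmas. Their proofs use only the quadrature exactness fixed in Assumption~\ref{ass:Q}---which is insensitive to the coefficients being complex---and uniform bounds on the piecewise norms $\norm[\Wsobpw{N,q}{\Dcomp}]{\bJ_{\widetilde\bT}}$, $\norm[\Wsobpw{N,\infty}{\Dcomp}]{\eps_{\widetilde\bT}}$ and $\norm[\Wsobpw{N,\infty}{\Dcomp}]{\mu_{\widetilde\bT}^{-1}}$. Because $\bT\mapsto(\eps_{\widetilde\bT},\mu_{\widetilde\bT}^{-1},\bJ_{\widetilde\bT})$ depends continuously on $\bT\in Z_N$ through the explicit formulas \eqref{eq:mu_Lam_J_tildT}, and because $\eps,\mu,\bJ$ admit holomorphic extensions to $O_{\D_H}$ (Assumption~\ref{ass:param}), the same finite-cover-and-shrink argument as in Proposition~\ref{prop:contcont} bounds these coefficient norms uniformly over $O_{N,\frakT}$; the $\eps,\mu$ bounds are absorbed into the constant, while the linear dependence on the data is tracked through $\norm[\bm{\cC}^{N}(O_{\D_H})]{\bJ}$, the holomorphic analogue of the $\Wsob{N,q}{\D_H}$-norm appearing in Lemma~\ref{lemma:rhserror}.

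Assembling exactly as in the chain \eqref{eq:str_eq_3}--\eqref{eq:str_eq_4}, using the mesh-independent bounds for the interpolant $\cI^c_k$ and the fact that $k\le N$ (so that $h_i^N\le h_i^k$ once $h_i\le 1$, which holds for all but finitely many $i$ by Assumption~\ref{ass:tau}, the exceptions absorbed into the constant), then yields \eqref{eq:holo_conv}. The regularity contribution $\sup_{\bT\in O_{N,\frakT}}\norm[\hscurlbfpw{N}{\Dcomp}]{\widetilde\frakE(\bT)}$---finite by Proposition~\ref{prop:contcont}---and the data contribution $\norm[\bm{\cC}^{N}(O_{\D_H})]{\bJ}$ are precisely the two terms collected into $B_{N,\frakT,\bJ}$. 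I expect the only delicate point to be the compatibility of the two shrinking requirements on $O_{N,\frakT}$: securing uniform coercivity and uniform coefficient bounds may force $O_{N,\frakT}$ smaller, and one must confirm that the finite supremum $\sup_{\bT\in O_{N,\frakT}}\norm[\hscurlbfpw{N}{\Dcomp}]{\widetilde\frakE(\bT)}$ survives. This is automatic, since all these properties are obtained on one common finite cover of $\frakT$ and are preserved under any further shrinking.
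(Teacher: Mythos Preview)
Your proposal is correct and follows the approach the paper indicates: the paper's proof consists of a single sentence stating that the argument is analogous to that of Proposition~\ref{prop:contcont}, and your sketch spells out precisely this---re-running the Strang-based discretization-error analysis of Theorem~\ref{thm:approx} on the open neighbourhood $O_{N,\frakT}$, with uniform coercivity and coefficient bounds supplied by the finite-cover-and-shrink construction of Proposition~\ref{prop:contcont}. Your treatment is in fact considerably more detailed than what the paper provides.
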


\subsection{Parameter discretization}
\label{sec:ParEst}
Numerical computations require to suitably discretize the parameter
set $\frakT$ in Assumption \ref{ass:param}. To this end, set
$U:=[-1,1]^\mathbb{N}$ and assume that
$$\{\widetilde\bT_j\}_{j\in\N_0}\subset {Z_N}$$ is a summable
sequence in ${Z_N}$ (see \eqref{eq:sZ}), i.e.,
\begin{equation*}
\{\norm[{Z_N}]{\widetilde\bT_j}\}_{j\in\N_0}\in\ell^{1}(\N_0).
\end{equation*}
For $\bsy\in U$, as in \cite{AJSZ20} we define
\begin{equation}\label{eq:random_var}
\sigma(\bsy):=\bT_0 + \sum_{j\in\N}y_j \bT_j\in {Z_N}\hookrightarrow Z,
\end{equation}
and we consider the set of admissible domain perturbations to be
given as
\begin{equation}\label{eq:frakTexpand}
  \frakT:=\set{\sigma(\bsy)}{\bsy\in U}.
\end{equation}
The continuity of $\sigma:U\to\frakT$ and the compactness of $U$, 
with the product topology (see \cite[Sec.~12.2]{royden2010real})
yields the compactness of $\frakT$ in ${Z_N}$.
Hence, for every $\bsy\in U$, $\sigma(\bsy)\in\frakT$ is an
expansion of a perturbation $\bT\in{Z_N}$ in terms of the
sequence $\bsy$. Throughout what follows, we interpret 
item (\ref{it:Assum_1_frakT}) in Assumption \ref{ass:material_trafo_smooth}
as an Assumption on the sequence $\{\norm[{Z_N}]{\widetilde\bT_j}\}_{j\in\N_0}$,
i.e., we assume that $\{\norm[{Z_N}]{\widetilde\bT_j}\}_{j\in\N_0}$ is such that
item (\ref{it:Assum_1_frakT}) in Assumption \ref{ass:material_trafo_smooth} holds.
Moreover, introducing the infinite product probability
measure $\upmu=\otimes_{j\in\IN}\frac{\lambda}{2}$, where $\lambda$
denotes the %
Lebesgue measure on $[-1,1]$, we can consider
$\sigma$ in \eqref{eq:random_var} as a random variable on
the probability space
$(U,\mathfrak{B},\upmu)$, where $\mathfrak{B}$ is the infinite product Borel
$\sigma$-algebra on $U$. Under Assumption \ref{ass:param}, Problem
\ref{prob:weak_comp_T} has a unique solution for each $\bsy\in U$, which
we shall denote by $\widetilde\bE(\bsy)\in\hncurlbf{\Dcomp}$, more
precisely, with the solution map $\widetilde\frakE$ from Theorem
\ref{thm:holcont},
\begin{equation}
\label{eq:par_cont_sol}
\widetilde\bE(\bsy):=\widetilde\frakE(\sigma(\bsy))\in\hncurlbf{\Dcomp}.
\end{equation}
Similarly, the discrete problem (Problem \ref{prob:disc_var_ref})
possesses a unique solution
\begin{equation}
\label{eq:par_disc_sol}
\widetilde\bE_{h}(\bsy):=\widetilde\frakE_h(\sigma(\bsy))\in\bm{P}^c_k(\tau_h),
\end{equation}
where $\widetilde\frakE_h$ is as in Theorem \ref{thm:holdisc}. Thus,
our goal is to approximate the expected values of the mappings
$\widetilde\bE:U\to \hncurlbf{\Dcomp}$ and
$\widetilde\bE_h:U\to \bm{P}^c_k(\tau_h)$.

Furthermore,
Theorems \ref{thm:holcont} and \ref{thm:holdisc}, together with the
continuity of $\sigma: U\to \frakT$ in \eqref{eq:random_var} and the
compactness of $U$ with the product topology (\emph{cf.}~\cite[Sec.~12.2]{royden2010real})
yield the following result.

\begin{proposition}
\label{prop:integ_E_Eh}
Let Assumptions \ref{ass:material_trafo_smooth} through \ref{ass:param} hold.
Further assume that, for $\sigma:U\to {Z_N}$ as in \eqref{eq:random_var},
it holds that $\{\norm[{Z_N}]{\bT_j}\}_{j\in\IN}$ belongs to $\ell^1(\IN)$.
Then, the mappings $\widetilde\bE$ and $\widetilde\bE_h$ in
\eqref{eq:par_cont_sol} and \eqref{eq:par_disc_sol}, respectively,
are such that $\widetilde\bE\in\lp{2}{U,\mathfrak{B},\upmu;\hocurl{\Dcomp}}$
and $\widetilde\bE_h\in\lp{2}{U,\mathfrak{B},\upmu;\bm{P}^c_k(\tau_h)}$.
Moreover, under the assumptions of Proposition \ref{prop:contdisc}, there holds that
\begin{gather}
\norm[\hcurlbf{\Dcomp}]{\mathbb{E}(\widetilde\bE-\widetilde\bE_{h_i})}\leq C{B_{{N},{\frakT},\bJ}}h_i^k,\\
\norm[\lp{2}{U;\hcurlbf{\Dcomp}}]{\widetilde\bE_{h_{i+1}}-\widetilde\bE_{h_i}}
\leq C{B_{{N},{\frakT},\bJ}}h_{i+1}^k,
\end{gather}
for each $i\in\IN$,
where the constant $C>0$ is independent of the mesh-size, $N\in\IN$, $q>3$ and $k\leq N$ are as in Assumption \ref{ass:Q}, and
$O_{{N},{\frakT}}\subset{Z_N}$ is as in Proposition \ref{prop:contcont}.
\end{proposition}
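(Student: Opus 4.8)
The plan is to derive all four assertions from three ingredients already available: the continuity (via holomorphy) of the solution maps, the compactness of the parameter domain $U$, and the uniform convergence estimate of Proposition~\ref{prop:contdisc}. First I would settle the measurability and square-integrability of $\widetilde\bE$ and $\widetilde\bE_h$. By Theorems~\ref{thm:holcont} and~\ref{thm:holdisc} the maps $\widetilde\frakE:O_\frakT\to\hncurlbf{\Dcomp}$ and $\widetilde\frakE_h:O_\frakT\to\bm{P}^c_k(\tau_h)$ are holomorphic, hence continuous, on the open set $O_\frakT\subseteq Z$. Composing with the map $\sigma:U\to\frakT\subseteq O_\frakT$ from~\eqref{eq:random_var}, which is continuous from $U$ (with its product topology) into $Z$ because $\{\norm[{Z_N}]{\bT_j}\}_{j\in\IN}\in\ell^1(\IN)$, shows that $\widetilde\bE=\widetilde\frakE\circ\sigma$ and $\widetilde\bE_h=\widetilde\frakE_h\circ\sigma$ are continuous into the respective separable target spaces. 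Continuity into a separable Banach space yields strong measurability, while the uniform bounds~\eqref{eq:exactM} and~\eqref{eq:exactM_h} give finite suprema of $\norm[\hcurlbf{\Dcomp}]{\widetilde\bE(\bsy)}$ and $\norm[\hcurlbf{\Dcomp}]{\widetilde\bE_h(\bsy)}$ over $\bsy\in U$; since $\upmu$ is a probability measure, squaring and integrating places both maps in the asserted Bochner spaces.

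For the bias estimate I would exploit that every realization lies in the admissibility set: $\sigma(\bsy)\in\frakT\subseteq O_{N,\frakT}$ for all $\bsy\in U$, so the supremum bound~\eqref{eq:holo_conv} of Proposition~\ref{prop:contdisc} holds pointwise,
\begin{equation*}
\norm[\hcurlbf{\Dcomp}]{\widetilde\bE(\bsy)-\widetilde\bE_{h_i}(\bsy)}\leq C B_{{N},{\frakT},\bJ}\,h_i^k\qquad\text{for all }\bsy\in U.
\end{equation*}
Combining the Bochner inequality $\norm[\hcurlbf{\Dcomp}]{\mathbb{E}(\widetilde\bE-\widetilde\bE_{h_i})}\leq\int_U\norm[\hcurlbf{\Dcomp}]{\widetilde\bE(\bsy)-\widetilde\bE_{h_i}(\bsy)}\,\d\upmu(\bsy)$ with $\upmu(U)=1$ then yields the first estimate directly.

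For the level-difference estimate I would write $\widetilde\bE_{h_{i+1}}-\widetilde\bE_{h_i}=(\widetilde\bE-\widetilde\bE_{h_i})-(\widetilde\bE-\widetilde\bE_{h_{i+1}})$ and apply the triangle inequality in $\lp{2}{U;\hcurlbf{\Dcomp}}$. Squaring the same pointwise bound and integrating against $\upmu$ gives $\norm[\lp{2}{U;\hcurlbf{\Dcomp}}]{\widetilde\bE-\widetilde\bE_{h_j}}\leq C B_{{N},{\frakT},\bJ}\,h_j^k$ for $j\in\{i,i+1\}$, whence the left-hand side is bounded by $C B_{{N},{\frakT},\bJ}(h_i^k+h_{i+1}^k)$. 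Finally I would absorb $h_i^k$ into $h_{i+1}^k$ using the mesh relation~\eqref{eq:tauexp}, which gives $h_i\leq C_2 s^i\leq (C_2/(C_1 s))\,h_{i+1}$ and hence $h_i^k+h_{i+1}^k\leq(1+(C_2/(C_1 s))^k)\,h_{i+1}^k$, producing the stated bound after redefining $C$.

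I do not expect a genuine obstacle: the argument is an assembly of the continuity of the solution maps, the compactness of $U$, and the already-established rate of Proposition~\ref{prop:contdisc}. The only point demanding care is the justification that the \emph{uniform} bound of Proposition~\ref{prop:contdisc} may be invoked for every single realization $\sigma(\bsy)$, which is exactly what the inclusion $\frakT\subseteq O_{N,\frakT}$ provides, together with the fact that the constant $C$ there is independent of $\bsy\in U$.
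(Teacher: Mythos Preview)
Your proposal is correct and follows essentially the same route as the paper: continuity of $\sigma$ from the $\ell^1$ assumption, composition with the holomorphic (hence continuous) solution maps from Theorems~\ref{thm:holcont} and~\ref{thm:holdisc}, uniform boundedness on $U$ to obtain $L^2$-membership, then the Bochner inequality together with Proposition~\ref{prop:contdisc} for the bias, and a triangle inequality plus the mesh relation~\eqref{eq:tauexp} for the level difference. The only cosmetic difference is that the paper invokes a Bochner-integrability result from \cite{JZdiss} where you argue measurability directly via continuity into a separable target.
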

\begin{proof}
From our assumption that $\{\norm[{Z_N}]{\bT_j}\}_{j\in\IN}$
belongs to $\ell^1(\IN)$ there follows that the mapping $\sigma:U\to {Z_N}$ is continuous, while
Theorems \ref{thm:holcont} and \ref{thm:holdisc} imply the continuity of the mappings $\widetilde\frakE:\frakT\to\hncurlbf{\Dcomp}$ and $\widetilde\frakE_h:\frakT\to\bm{P}^c_k(\tau_h)$, respectively.
Since, the composition $\widetilde\bE$ and $\widetilde\bE_h$ are defined as the compositions of 
$\widetilde\frakE$ and $\widetilde\frakE_h$, respectively, with the continuous mapping $\sigma$, there holds that
they are continuous as well. An application of Corollary A.2.3 in \cite{JZdiss} then gives the Bochner
integrability of $\widetilde\bE:U\to\hncurlbf{\Dcomp}$ and $\widetilde\bE_h:U\to\bm{P}^c_k(\tau_h)$,
with
\begin{gather}
\left(\int_{U}\norm[\hcurlbf{\Dcomp}]{\widetilde\bE(\bsy)}^2\;\d\!\upmu(\bsy)\right)^{\half}\leq
\sup_{{\bT}\in O_{\frakT}}\norm[\hcurl{\Dcomp}]{\widetilde\frakE(\bT)}
<\infty,\\
\left(\int_{U}\norm[\hcurlbf{\Dcomp}]{\widetilde\bE_h(\bsy)}^2\;\d\!\upmu(\bsy)\right)^{\half}\leq
\sup_{{\bT}\in O_{\frakT}}\norm[\hcurl{\Dcomp}]{\widetilde\frakE_h(\bT)}
<\infty,
\end{gather}
where $O_{\frakT}\subset Z$ is as in Theorems \ref{thm:holcont} and \ref{thm:holdisc}. Moreover,
by an application of Proposition \ref{prop:contdisc} and for each $i\in\IN$, we have that
\begin{align}
\norm[\hcurlbf{\Dcomp}]{\mathbb{E}(\widetilde\bE-\widetilde\bE_{h_i})}&\leq
\int_U\norm[\hcurlbf{\Dcomp}]{\widetilde\bE(\bsy)-\widetilde\bE_{h_i}(\bsy)}\;\d\!\upmu(\bsy)\leq C{B_{{N},{\frakT},\bJ}}h_i^k,
\end{align}
and, for each $i\in\IN$, 
\begin{align}
\norm[\lp{2}{U;\hcurlbf{\Dcomp}}]{\widetilde\bE_{h_{i+1}}-\widetilde\bE_{h_i}}&\leq \norm[\lp{2}{U;\hcurlbf{\Dcomp}}]{\widetilde\bE-\widetilde\bE_{h_{i+1}}}+\norm[\lp{2}{U;\hcurlbf{\Dcomp}}]{\widetilde\bE-\widetilde\bE_{h_{i}}}\\
&\leq C{B_{{N},{\frakT},\bJ}}(h_{i+1}^k+h_i^k)\\
&\leq C{B_{{N},{\frakT},\bJ}}\left(1+\frac{C_2}{sC_1}\right)h_{i+1}^k,
\end{align}
where $O_{{N},{\frakT}}\subset{Z_N}$ is as in Proposition \ref{prop:contcont}, $N\in\IN$ is as in Assumptions \ref{ass:material_trafo_smooth} and \ref{ass:computational_domain}, $q>3$ is as
in Assumption \ref{ass:Q}, $C>0$ is as in Proposition \ref{prop:contdisc} and $0<C_1\leq C_2$ are as in Assumption \ref{ass:tau}.
\end{proof}

\section{Multilevel Approximation}\label{sec:mlapp}

\subsection{Multilevel Monte Carlo}
\label{ssec:mlmc}
To present the Multilevel Monte Carlo (MLMC) method, 
we briefly recall some useful definitions and results. 
For further details, we refer to
\cite{ABChS_2011,KCMG_2011,gantner2017computational} 
and the references therein.

\subsubsection{Single level Monte Carlo method}
\label{sec:SLMC}
Let $X$ be a separable Hilbert space. 
For a Bochner integrable random variable $f:U\to X$, 
the Monte Carlo (MC) method attempts to approximate 
the expected value of $f$ by its
mean over a finite set of $\cN\in\IN$ independent and identically
distributed sample evaluations over $U$,
\begin{align*}
{Q}^{\text{MC}}_{\cN}(f):=\frac{1}{\cN}\sum\limits_{i=1}^{\cN}f(\bsy_{(i)}),
\end{align*}
where $\{\bsy_{(i)}\}_{i=1}^{\cN}$ are independent uniform random variables on a probability space $(\Omega,\mathfrak{F},\upnu)$ with values in $U$ (so that ${Q}^{\text{MC}}_{\cN}(f):\Omega\to X$ is, in itself, a random variable on $(\Omega,\mathfrak{F},\upnu)$ with values in $X$).
If $f$ belongs to $\lp{2}{U,\mathfrak{B},\upmu;X}$,
then the following convergence estimate for the MC method
holds (\emph{cf.}~\cite[Lemma 4.1]{ABChS_2011}),
\begin{align}\label{eq:mcl2u}
\norm[\lp{2}{{\Omega;X}}]{Q^{\text{MC}}_\cN(f)-\mathbb{E}(f)}
\leq \cN^{-\half}\norm[\lp{2}{{U;X}}]{f}.
\end{align}

Moreover, 
if each evaluation $f(\bsy_{(i)}(\zeta))$, for $\zeta\in\Omega$,
cannot be computed exactly but
can only be approximated by a numerical method yielding an approximation
to the random variable $f$, denoted $f_h$---where the subindex $h$
signifies, as before, the precision of the method---, 
then it holds that (\emph{cf.}~\cite[Section 2.1]{KCMG_2011})
\begin{align}
\norm[\lp{2}{{\Omega};X}]{Q^{\text{MC}}_\cN(f_h)-\mathbb{E}(f)}
&\leq
\norm[\lp{2}{{\Omega};X}]{Q^{\text{MC}}_\cN(f_h)-\mathbb{E}(f_h)}+
\norm[\lp{2}{{\Omega};X}]{\mathbb{E}(f_h-f)}\\
&\leq
\cN^{-\half}\norm[\lp{2}{U;X}]{f_h}+\norm[X]{\mathbb{E}(f_h-f)}.
\end{align}
The previous estimate implies that the number of samples $\cN$
needs to be chosen proportional to $\norm[X]{\mathbb{E}(f_h-f)}^{-2}$\textemdash
assuming $\norm[\lp{2}{U;X}]{f_h}$ remains bounded\textemdash
to balance the error contributions in the upper bound.
\subsubsection{Multilevel Monte Carlo method}
\label{sec:MLMC}
The MLMC method differs from the standard single level MC Galerkin discretization
in that it employs simultaneously different discretization levels of
the random variable $f$, namely $\{f_{h_i}\}_{i=1}^{L}$ for $L\in\IN$, 
to approximate
$\mathbb{E}(f)$:
\begin{align}\label{eq:MLMC_defi}
Q^{\text{MLMC}}_L(f):=\sum\limits_{i=1}^{L}Q^{\text{MC}}_{\cN_{L,i}}(f_{h_i}-f_{h_{i-1}}),
\end{align}
where $\{\cN_{L,i}\}_{i=1}^{L}\subset\IN$ corresponds to the number
of samples at each level, $f_{h_0}\equiv 0$
and we assume that $\norm[X]{\mathbb{E}(f_{h_i}-f)}$ decreases as 
the sub-index $i$ increases, i.e.~the precision of the method
increases with $i$. Then, under the same assumptions as before,
\begin{align}
&\norm[\lp{2}{{\Omega};X}]{Q^{\text{MLMC}}_L(f)-\mathbb{E}(f)}\\
&\leq\norm[\lp{2}{{\Omega};X}]{Q^{\text{MLMC}}_L(f)
-\sum\limits_{i=1}^L\mathbb{E}(f_{h_i}-f_{h_i-1})}
+\norm[\lp{2}{{\Omega};X}]{\mathbb{E}(f_{h_L}-f)}\\
&\leq\sum\limits_{i=1}^{L}\cN_{L,i}^{-\half}\norm[\lp{2}{{\Omega};X}]{f_{h_i}-f_{h_{i-1}}}
+\norm[X]{\mathbb{E}(f_{h_L}-f)}.\label{eq:mlmc_error_dec}
\end{align}

\subsubsection{Multilevel Monte Carlo for Problem \ref{prob:weak_nom_T}}
We now return to our specific setting and compute convergence
estimates for the approximation, via the MLMC method, of the
expected value of $\widetilde\bE:U\to\hncurlbf{\Dcomp}$, as in
\eqref{eq:par_cont_sol}, through the approximations given by
$\widetilde\bE_h:U\to\bm{P}^c_k(\tau_h)$ as in
\eqref{eq:par_disc_sol}. Under its respective assumptions, Proposition
\ref{prop:integ_E_Eh} ensures that both ${\widetilde\bE(\by)}$ and
${\widetilde\bE_h(\by)}$ belong to
$\lp{2}{U,\mathfrak{B},\upmu;\hncurlbf{\Dnul}}$ as well as the error
estimates required to effectively bound the MLMC error. 
The
number of samples $\cN_{L,i}$ \eqref{eq:mlmc_error_dec} of each level
will be chosen so that the convergence rate of the MLMC method is the
same as that of the FE approximation of $\widetilde\bE$, namely
$h^k$.  To estimate the total work of the method, we see that the
computation of the MLMC estimator at level $L$ requires us to solve
for $\widetilde\bE_{h_i}-\widetilde\bE_{h_{i-1}}$ at $\cN_{L,i}$
random points in $U$, corresponding to
$[{\rm dim} (\bm{P}^c_k(\tau_{h_i}))+{\rm dim}
(\bm{P}^c_k(\tau_{h_{i-1}}))]\cN_{L,i}$ degrees of
freedom\footnote{Here, we take $\widetilde\bE_{h_0}\equiv\bnul$ and
  ${\rm dim} (\bm{P}^c_k(\tau_{h_0}))=0$, as before.}.  Hence,
we %
define the following quantity as an estimate of the
  computational complexity of the method
\begin{align}\label{eq:MLMC_tot_work}
\text{work}(Q^{\text{MLMC}}_{L}):=\sum\limits_{i=1}^{L}\cN_{L,i}\left[{\rm dim} (\bm{P}^c_k(\tau_{h_i}))+{\rm dim}
(\bm{P}^c_k(\tau_{h_{i-1}}))\right].
\end{align} 

\begin{theorem}
\label{thm:MLMC_E}
Let Assumptions \ref{ass:material_trafo_smooth} through \ref{ass:param}
hold and let $\widetilde\bE:U\to\hncurlbf{\Dcomp}$
be as in \eqref{eq:par_cont_sol}. Then,
tere is a choice of $\{\cN_{L,i}\}_{i=1}^{L}$ for each $L\in\IN$, such that 
\begin{align}\label{eq:MLMC_error}
\norm[\lp{2}{{\Omega};\hcurl{\Dcomp}}]
{Q^{\rm{MLMC}}_L(\widetilde\bE)-\mathbb{E}(\widetilde\bE)}
\leq C{B_{{N},{\frakT},\bJ}}h_L^{k},
\end{align}
where the constant $C>0$ is independent of $L$---hence, of the mesh-size---and
$N\in\IN$, $q>3$ and $k\leq N$ are as in Assumption \ref{ass:Q}.
Furthermore, the total work is bounded by
\begin{align*}
\text{work}(Q^{\rm{MLMC}}_L)
=
\begin{cases}
\mathcal{O} \left({\rm dim} (\bm{P}^c_k(\tau_{h_L}))
\log\left({\rm dim} (\bm{P}^c_k(\tau_{h_L}))\right)^{2+2\delta}\right)
\quad &\text{if }k=1,\\%\half<r<\frac{3}{2},\\
\mathcal{O}\left({\rm dim} (\bm{P}^c_k(\tau_{h_L}))^{\frac{2}{3}k}\right)
\quad &\text{if }k>1,
\end{cases}
\end{align*}
for any $\delta>0$.
\end{theorem}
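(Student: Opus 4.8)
The plan is to reduce the stochastic statement to a deterministic sample-allocation problem and then to solve a constrained optimization. First I would specialize the abstract multilevel decomposition \eqref{eq:mlmc_error_dec} to $f=\widetilde\bE$, $f_{h_i}=\widetilde\bE_{h_i}$ and $X=\hncurlbf{\Dcomp}$, splitting the error into the bias $\norm[\hcurlbf{\Dcomp}]{\mathbb{E}(\widetilde\bE_{h_L}-\widetilde\bE)}$ and the statistical term $\sum_{i=1}^{L}\cN_{L,i}^{-1/2}M_i$ with $M_i:=\norm[\lp{2}{U;\hcurlbf{\Dcomp}}]{\widetilde\bE_{h_i}-\widetilde\bE_{h_{i-1}}}$. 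Proposition \ref{prop:integ_E_Eh} already supplies both ingredients: the bias is $\le C B_{N,\frakT,\bJ}h_L^k$, while the increment bound gives $M_i\le C B_{N,\frakT,\bJ}h_i^k$ for $i\ge 2$, with $M_1$ controlled by the uniform bound \eqref{eq:exactM_h} (recall $\widetilde\bE_{h_0}\equiv\bnul$). This absorbs all probabilistic content, so the task becomes the deterministic choice of $\{\cN_{L,i}\}_{i=1}^{L}$.

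Since the bias is already of the desired order $h_L^k$, to obtain \eqref{eq:MLMC_error} it remains to pick the $\cN_{L,i}$ so that $\sum_{i=1}^{L}\cN_{L,i}^{-1/2}M_i\le C B_{N,\frakT,\bJ}h_L^k$. Using the mesh scaling $h_i\asymp s^i$ from \eqref{eq:tauexp}, hence $M_i\lesssim s^{ik}$ and $h_L^k\asymp s^{Lk}$, I would allocate samples geometrically, placing more samples on coarse levels, of the form $\cN_{L,i}\asymp\kappa_L\,s^{\rho i}$ with $\rho<0$, equipped with a mild logarithmic safety factor in $L$ (or in $L-i$) that makes the level sum summable; the normalization $\kappa_L$ is then fixed so that the resulting geometric series telescopes to $s^{Lk}$, which yields \eqref{eq:MLMC_error}.

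The core of the argument is the work estimate. Inserting ${\rm dim}(\bm{P}^c_k(\tau_{h_i}))\asymp s^{-3i}$ from \eqref{eq:dof_s} into \eqref{eq:MLMC_tot_work} gives $\text{work}(Q^{\mathrm{MLMC}}_L)\lesssim\sum_{i=1}^{L}\cN_{L,i}\,s^{-3i}$, and minimizing this subject to the error constraint (by Lagrange multipliers, as in \cite{ABChS_2011,KCMG_2011}) shows that at the optimum the per-level work scales like $s^{i(2k-3)/3}$ up to an $L$-dependent normalization. The decisive dichotomy is the sign of $2k-3$, i.e.\ the comparison of the spatial cost exponent $\gamma=3$ with the exponent $2k$ governing the squared increments $M_i^2\asymp s^{2ik}$. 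For $k>1$ one has $2k-3>0$, the graded work is summable and dominated by the coarsest level, giving $\text{work}(Q^{\mathrm{MLMC}}_L)=\mathcal{O}\big({\rm dim}(\bm{P}^c_k(\tau_{h_L}))^{2k/3}\big)$ through $s^{-2kL}=({\rm dim}(\bm{P}^c_k(\tau_{h_L})))^{2k/3}$. For $k=1$ one has $2k-3<0$, the work concentrates at the finest levels; bounding the number of active levels by $L\asymp\log({\rm dim}(\bm{P}^c_k(\tau_{h_L})))$ (again from \eqref{eq:tauexp}--\eqref{eq:dof_s}) and absorbing the safety factor yields the stated bound $\mathcal{O}\big({\rm dim}(\bm{P}^c_k(\tau_{h_L}))\,\log({\rm dim}(\bm{P}^c_k(\tau_{h_L})))^{2+2\delta}\big)$.

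I expect the main obstacle to be this constrained optimization together with the bookkeeping forced by the integer constraint $\cN_{L,i}\ge 1$ and the normalization $\kappa_L$: one must check simultaneously that the chosen sample numbers meet the error target and keep the work sum bounded in each regime, and that rounding up to integers does not degrade either bound. The case $k=1$ is the delicate one, since the cost is dominated by the finest levels, where the optimal sample count is of order one, so that the logarithmic factor is dictated by the number of levels $L$ and by the safety factor built into the allocation rather than by any sharp variance cancellation.
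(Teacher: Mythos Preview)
Your proposal is correct and follows essentially the same strategy as the paper: apply the decomposition \eqref{eq:mlmc_error_dec}, invoke Proposition~\ref{prop:integ_E_Eh} for both the bias and the increment bounds, choose a geometric sample allocation with a summability-enforcing factor, and analyze the work sum via the sign of $2k-3$.

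The one noteworthy difference is tactical. The paper bypasses the Lagrange-multiplier optimization you describe and simply \emph{writes down} the allocation $\cN_{L,i}=\mathcal{O}\big((h_i/h_L)^{2k}i^{2+2\delta}\big)$---i.e.\ your $\kappa_L\asymp h_L^{-2k}$ and your ``safety factor'' realized as the polynomial $i^{2+2\delta}$ in the level index---and then verifies by direct substitution that (a) the statistical sum becomes $h_L^k\sum_i i^{-(1+\delta)}=\mathcal{O}(h_L^k)$ and (b) the work sum $s^{-2kL}\sum_i i^{2+2\delta}s^{(2k-3)i}$ splits into the two claimed regimes. This direct approach is shorter and sidesteps the bookkeeping you flag (integer rounding, normalization), at the cost of not explaining \emph{where} the allocation comes from; your optimization framing would make that derivation transparent but is not needed for the theorem as stated. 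Note also that the paper's safety factor is polynomial in the level index $i$ (hence polylogarithmic in ${\rm dim}(\bm{P}^c_k(\tau_{h_L}))$ once $i\le L\asymp\log{\rm dim}$), which matches the exponent $2+2\delta$ in the $k=1$ work bound exactly.
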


\begin{proof}
From \eqref{eq:mlmc_error_dec} and Proposition \ref{prop:integ_E_Eh}
it follows that
\begin{align*}
&\norm[\lp{2}{\Omega;\hcurlbf{\Dcomp}}]{
Q^{\rm{MLMC}}_L(\widetilde\bE)-\mathbb{E}(\widetilde\bE)}\\
&\leq \norm[\hcurlbf{\Dcomp}]{\mathbb{E}(\widetilde\bE-\widetilde\bE_{h_{L}})}
+\sum\limits_{i=1}^{L}\cN_{L,i}^{-\half}\norm[\lp{2}{U;\hcurlbf{\Dcomp}}]
{\widetilde\bE_{h_i}-\widetilde\bE_{h_{i-1}}}\\
&\leq C{B_{{N},{\frakT},\bJ}} \left( h_L^k+\sum\limits_{i=1}^{L}\cN_{L,i}^{-\half}h_i^k\right),
\end{align*}
where $C>0$ is as in Proposition \ref{prop:integ_E_Eh}.
We take, for each $L\in\IN$ and $i\in\{1,\hdots,L\}$,
$\cN_{L,i}={\mathcal{O}}((h_i/h_L)^{2k}i^{2+2\delta})$ for arbitrary $\delta>0$, so that 
\begin{align*}
\sum\limits_{i=1}^{L}\cN_{L,i}^{-\half}h_i^k\leq Ch_L^k\sum\limits_{i=1}^{L}i^{-(1+\delta)},
\end{align*}
where $C>0$ is independent of $L\in\IN$, and \eqref{eq:MLMC_error} follows upon noticing that the last sum
is bounded for all $L\in\IN$ and $\delta>0$. We continue
with the bounding the total work of the MLMC method. From
\eqref{eq:MLMC_tot_work} and Remark \ref{rmk:tauexp}, it follows that
\begin{align*}
\text{work}(Q^{\rm{MLMC}}_L)\leq C_{s,4}(1+s^{3})\sum\limits_{i=1}^{L}\cN_{L,i} s^{-3i},
\end{align*} 
where $s\in (0,1)$ is as in Assumption \ref{ass:tau}.
Recalling item (\ref{item:tau:sizes}) from 
Assumption \ref{ass:tau}, we may express
our choice for $\cN_{L,i}$ as $\cN_{L,i}=O(s^{2k(i-L)}i^{2+2\delta})$ 
so that
\begin{align*}
\text{work}(Q^{\rm{MLMC}}_L)&\leq  C_{s,4}(1+s^{3})\sum\limits_{i=1}^{L}i^{2+2\delta}s^{2k(i-L)-3i}\\
&\leq  C_{s,4}(1+s^{3})s^{-2kL}\sum\limits_{i=1}^{L}i^{2+2\delta}s^{(2k-3)i}.
\end{align*} 
If $k>1$, then $2k-3>0$---recall $k\in\IN$---and the claimed bound on 
the total work follows from \eqref{eq:dof_s}. 
If $k=1$, 
\begin{align*}
\text{work}(Q^{\rm{MLMC}}_L)&\leq  C_{s,4}(1+s^{3})s^{-2kL}\sum\limits_{i=1}^{L}i^{2+2\delta}s^{(2k-3)i}\\
&=  C_{s,4}(1+s^{3})s^{-3L}\sum\limits_{i=1}^{L}i^{2+2\delta}s^{(3-2k)(L-i)}\\
&\leq  C_{s,4}(1+s^{3})s^{-3L}\sum\limits_{j=0}^{L-1}(L-j)^{2+2\delta}s^{(3-2k)j}\\
&\leq  C_{s,4}(1+s^{3})s^{-3L}L^{2+2\delta}\sum\limits_{j=0}^{L-1}s^{(3-2k)j},
\end{align*} 
where the last sum is bounded for all $L\in\IN$ and 
the bound on the total work follows, again, from \eqref{eq:dof_s}.
\end{proof}

We may then use Theorem \ref{thm:MLMC_E} to bound the approximation
error of the MLMC method with respect to the total required work.
\begin{corollary}\label{cor:MLMC_rate_work}
Under the assumptions of Theorem \ref{thm:MLMC_E}, there is a choice
of $\{\cN_{L,i}\}_{i=1}^L$ such that
\begin{align*}
&\norm[\lp{2}{U;\hcurlbf{\Dcomp}}]{Q^{\rm{MLMC}}_L(\widetilde\bE)-\mathbb{E}(\widetilde\bE)}\leq C {B_{{N},{\frakT},\bJ}}\text{work}(Q^{\rm{MLMC}}_L)^{-\kappa(k)},
\end{align*}
where $k\in\IN$ is the corresponding convergence rate in
Proposition \ref{prop:contdisc}, $C>0$ is as in Theorem
\ref{thm:MLMC_E}, and
\begin{align*}
\kappa(k):=\begin{cases}
\frac{1}{3+\delta}\quad &\text{if }k=1,\\
\frac{1}{2}\quad &\text{if }k>1,
\end{cases}
\end{align*}
for arbitrary $\delta>0$.
\end{corollary}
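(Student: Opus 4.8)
The plan is to combine the two estimates already proved in Theorem~\ref{thm:MLMC_E}, namely the error bound $\norm[\lp{2}{\Omega;\hcurl{\Dcomp}}]{Q^{\rm{MLMC}}_L(\widetilde\bE)-\mathbb{E}(\widetilde\bE)}\leq C B_{{N},{\frakT},\bJ}h_L^k$ together with the corresponding bound on $\text{work}(Q^{\rm{MLMC}}_L)$, and to eliminate the mesh parameter $h_L$ (equivalently the level $L$) in favour of the work. Throughout I would freely use the two-sided comparisons $C_1 s^L\leq h_L\leq C_2 s^L$ from Assumption~\ref{ass:tau} and $C_{s,3}s^{-3L}\leq {\rm dim}(\bm{P}^c_k(\tau_{h_L}))\leq C_{s,4}s^{-3L}$ from Remark~\ref{rmk:tauexp}, so that all quantities become powers of $s^L$ up to multiplicative constants and, in the borderline case, logarithmic factors; here $s\in(0,1)$ is as in Assumption~\ref{ass:tau}.

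First I would dispose of the case $k>1$, which is the clean one. Here Theorem~\ref{thm:MLMC_E} gives $\text{work}(Q^{\rm{MLMC}}_L)=\mathcal{O}({\rm dim}(\bm{P}^c_k(\tau_{h_L}))^{\frac{2}{3}k})$, and substituting ${\rm dim}(\bm{P}^c_k(\tau_{h_L}))\leq C_{s,4}s^{-3L}$ yields $\text{work}(Q^{\rm{MLMC}}_L)\leq C s^{-2kL}\leq C' h_L^{-2k}$, using $h_L\geq C_1 s^L$. Consequently $h_L^{k}\leq C''\,\text{work}(Q^{\rm{MLMC}}_L)^{-1/2}$, and inserting this into the error bound produces exactly the claimed estimate with $\kappa(k)=\tfrac12$.

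The delicate case is $k=1$, where the work bound carries the extra factor $(\log{\rm dim}(\bm{P}^c_k(\tau_{h_L})))^{2+2\delta}$. Writing $W:=\text{work}(Q^{\rm{MLMC}}_L)$, I would first use $W\geq {\rm dim}(\bm{P}^c_k(\tau_{h_L}))\geq C_{s,3}s^{-3L}$ to conclude that $L\leq C\log W$ for all sufficiently large $W$; this is the step that tames the logarithms. Combining the work bound $W\leq C s^{-3L}L^{2+2\delta}$ with $h_L\leq C_2 s^L$ then gives $h_L\leq C\,W^{-1/3}L^{(2+2\delta)/3}\leq C'\,W^{-1/3}(\log W)^{(2+2\delta)/3}$.

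The main obstacle, and the heart of the $k=1$ argument, is to absorb this polylogarithmic factor into a slightly smaller algebraic rate. I would observe that $-\tfrac13+\tfrac{1}{3+\delta}=-\tfrac{\delta}{3(3+\delta)}<0$, so that $W^{-1/3+1/(3+\delta)}(\log W)^{(2+2\delta)/3}$ tends to $0$ as $W\to\infty$ (any negative power of $W$ eventually dominates any fixed power of $\log W$) and is trivially bounded on the remaining compact range of small $W$. This yields $h_L\leq C\,W^{-1/(3+\delta)}$, and inserting it into the error bound of Theorem~\ref{thm:MLMC_E} gives the stated estimate with $\kappa(1)=\tfrac{1}{3+\delta}$, the constant depending only on $\delta$, on $s$, and on the constants of Assumption~\ref{ass:tau} and Remark~\ref{rmk:tauexp}.
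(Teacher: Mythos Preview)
Your proposal is correct and matches the approach implicit in the paper, which states the corollary without proof as an immediate consequence of Theorem~\ref{thm:MLMC_E}. One minor slip: in the $k>1$ case you cite $h_L\geq C_1 s^L$ to pass from $s^{-2kL}$ to $h_L^{-2k}$, but the inequality you actually need there is the upper bound $h_L\leq C_2 s^L$ (which is equally available from Assumption~\ref{ass:tau}); the argument is otherwise clean.
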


\subsection{Multilevel Smolyak}
\label{sec:MLSmol}
The Smolyak algorithm provides a method for multi-dimensional
polynomial interpolation of functions on \emph{sparse-grids}.
Recalling the well-known construction requires to
introduce some standard notation first.  
The set of \emph{finitely supported multi-indices} 
is denoted by
\begin{equation*}
  \cF:=\set{\bsnu=(\nu_j)_{j\in\N}\in\N_0^\N}{|\bsnu|<\infty},
\end{equation*}
where $|\bsnu|:=\sum_{j\in\N}\nu_j$. 
For two multi-indices
$\bsnu=(\nu_j)_{j\in\N}$, $\bsmu=(\mu_j)_{j\in\N}\in\CF$ we will 
write
$\bsmu\le\bsnu$ if $\mu_j\le\nu_j$ for all $j\in\N$. 
A set
$\Lambda\subseteq\CF$ will be called \emph{downward closed} if
\begin{equation*}
  (\bsnu\in\Lambda\text{ and }\bsmu\le\bsnu)\qquad\Rightarrow\qquad
  \bsmu\in\Lambda
\end{equation*}
and it will be called \emph{finite} in case it is of finite
cardinality.

\subsubsection{Smolyak interpolation and quadrature}
Let $(\chi_j)_{j\in\N_0}$ be a sequence of so-called \emph{$\R$-Leja
points}, as constructed for example in \cite{CH13}. In particular, the
$(\chi_j)_{j\in\N_0}$ are distinct and $\chi_j\in [-1,1]$ for all
$j\in\N_0$.  These points will represent, in the following, one-dimensional interpolation and quadrature points. In principle, other
points could also be used, however this sequence has a known construction
and the favourable property that the Lebesgue constant of
$\{\chi_j\}_{j=0}^n$ grows at most polynomially as
$n\to\infty$ (see \cite{CH13}).

For $n\in\N_0$ let $I_n: \mathcal{C}^0([-1,1])\to \bbP_n$ 
be the polynomial interpolation operator interpolating a function in 
$(\chi_j)_{j=0}^n$,
i.e.
\begin{equation*}
  (I_nf)(y) = \sum_{j=1}^n f(\chi_j)\prod_{\substack{i=0\\ i\neq j}}^n
  \frac{y-\chi_i}{\chi_j-\chi_i}\qquad y\in [-1,1],
\end{equation*}
where empty products are understood to equal to one. 
For a multi-index
$\bsnu\in\CF$, we set $I_\bsnu:=\bigotimes_{j\in\N}I_{\nu_j}$, 
meaning that
$$I_\bsnu f(\bsy) 
  =
\sum_{\bsmu\le\bsnu}f((\chi_{\mu_j})_{j\in\N})\prod_{j\in\N}
 \prod_{i\neq \mu_j}^{\nu_j} (y_j-\chi_i)/(\chi_{\mu_j}-\chi_i),
$$ 
for every
$\bsy=(y_j)_{j\in\N}\in U$. For a finite downward closed set
$\Lambda\subseteq\CF$ the \emph{Smolyak interpolant}
$I_\Lambda: \mathcal{C}^0(U)\to \mathcal{C}^0(U)$ 
is defined with the so-called
\emph{combination formula} as
\begin{equation*}
  (I_\Lambda f)(\bsy) :=\sum_{\bsnu\in\cF} c_{\Lambda,\bsnu}
  I_\bsnu(\bsy),\qquad
  c_{\Lambda,\bsnu}:=\sum_{\set{\bse\in\{0,1\}^\N}{\bsnu+\bse\in\Lambda}}
  (-1)^{|\bse|}.
\end{equation*}
This
interpolation operator satisfies $I_\Lambda f = f$ for all
$f\in{\rm span}\set{\bsy^\bsnu}{\bsnu\in\Lambda}$ 
(\emph{cf.}~\cite[Lemma 1.3.3]{JZdiss}). 
Similarly, with the numerical integration 
$Q_\bsnu f:=\int_U I_\bsnu f$, 
we set
\begin{equation*}
  Q_\Lambda f :=\sum_{\bsnu\in\cF} c_{\Lambda,\bsnu} Q_\bsnu(\bsy),
\end{equation*}
which gives a quadrature rule for which
$Q_\Lambda f = \int_U f(\bsy)\dd\mu(\bsy)$ 
for all
$f\in{\rm span}\set{\bsy^\bsnu}{\bsnu\in\Lambda}$. 
For more details
about the construction and the properties of $I_\Lambda$ 
and
$Q_\Lambda$ we refer for example to \cite{MR1768951,klimke,CCS13_783,JZdiss}.
\subsubsection{Multilevel algorithm}
\label{sec:MLAlg}
To define a multilevel algorithm, we first associate to every
multi-index $\bsnu$ a work level $\wk_{\bsnu}$, or for short
$\Vwk=(\wk_\bsnu)_{\bsnu\in\CF}$. With $(\tau_{h_i})_{i\in\N}$ as in
Assumption \ref{ass:tau}, 
we shall assume for all $\bsnu\in\CF$ that
\begin{equation*}
  w_\bsnu \in \set{{\rm dim} (\bm{P}^c_k(\tau_{h_i}))}{i\in\N}\cup\{0\},
\end{equation*}
so that $w_\bsnu$ corresponds to the dimension of a FEM space. Furthermore
$|\Vwk|:=\sum_{\bsnu\in\CF} \wk_\bsnu<\infty$. For every $i\in\N$ this
then yields a finite multi-index set
\begin{equation}\label{eq:Gammaj}
  \Gamma_j(\Vwk):=\set{\bsnu\in\CF}{\wk_{\bsnu}\ge {\rm
      dim}(\bm{P}^c_k(\tau_{h_i}))}.
\end{equation}

The multilevel interpolation operator is defined as
\begin{equation}\label{eq:MLint}
  I_\Vwk^{\rm ML} \widetilde\bE:=\sum_{j\in\N}
  I_{\Gamma_i(\Vwk)}(\widetilde\bE_{h_i}-\widetilde\bE_{h_{i-1}})
\end{equation}
and the multilevel quadrature operator as
\begin{equation}\label{eq:MLquad}
  Q_\Vwk^{\rm ML} \widetilde\bE := \sum_{j\in\N}Q_{\Gamma_i(\Vwk)}(\widetilde\bE_{h_i} - \widetilde\bE_{h_{ji1}}), 
\end{equation}
where $\widetilde\bE_{h_j}\in \bm{P}^c_k(\tau_{h_j})$, 
as earlier, is the
discrete solution of \eqref{eq:weakh}. 
Here it is assumed that
$\Gamma_j(\Vwk)$ is downward closed for every $j\in\N$, 
so that
$I_{\Gamma_j(\Vwk)}$ and $Q_{\Gamma_j(\Vwk)}$ are
well defined. 
Furthermore, we point out again our convention
$\widetilde\bE_0\equiv 0$.

As a measure of the work required to compute $I_{\Vwk}^{\rm ML} \widetilde\bE$
and $Q_{\Vwk}^{\rm ML} \widetilde\bE$ 
we consider the total number of degrees of
freedom of all required function approximations. 
For example, computing 
$I_{\Gamma_j(\Vwk)}\widetilde\bE_{h_j} - \widetilde\bE_{h_{j-1}})$ 
or
$Q_{\Gamma_j(\Vwk)}(\widetilde\bE_{h_j} - \widetilde\bE_{h_{j-1}})$ 
requires to evaluate
both approximations 
$\widetilde\bE_{h_j}:U\to \bm{P}^c_k(\tau_{h_j})$ 
and
$\widetilde\bE_{h_{j-1}}:U\to\bm{P}^c_k(\tau_{h_{j-1}})$ 
to
$\bE:U\to\hncurlbf{\widetilde\D}$, 
at $|\Gamma_j(\Vwk)|$ points in $U$.
This
corresponds to
$({\rm dim} (\bm{P}^c_k(\tau_{h_j}))+{\rm dim}
(\bm{P}^c_k(\tau_{h_{j-1}})))|\Gamma_j(\Vwk)|$ degrees of freedom. 
In total
\begin{equation}\label{eq:wrk}
  \wrk(\Vwk):=\sum_{j\in\N}({\rm dim}
  (\bm{P}^c_k(\tau_{h_j}))+{\rm dim}
  (\bm{P}^c_k(\tau_{h_{j-1}})))|\Gamma_j(\Vwk)|
\end{equation}
counts the degrees of freedom of all FE approximations required for
the computation of the multilevel interpolant/quadrature.

\subsubsection{Abstract convergence theory}
We now recall an approximation result for multilevel Smolyak
interpolation and quadrature from \cite{JZdiss}, also see
\cite{ZDS18}. It provides a statement about the algebraic convergence
rate that is achievable in terms of $\wrk(\Vwk)$ in
\eqref{eq:wrk}. Implementing the method requires to determine the work
levels $\Vwk=(\wk_{\bsnu})_{\bsnu\in\CF}$ a priori. We comment on
possible choices when presenting numerical experiments in Section
\ref{sec:experiments}. We first recall the assumptions under which the
subsequent convergence results are valid.

\begin{assumption}\label{ass:holweakerML}
  The spaces $X$, $Z$ and ${Z_N}$ are complex Banach spaces and there
  holds the continuous embedding ${Z_N}\hookrightarrow Z$. For a
  summable sequence $(\psi_j)_{j\in\N} \subseteq {Z_N}$ denote
  $\sigma(\bsy) = \sum_{j\in\N} y_j \psi_j\in Z$ for
  $\bsy\in U$ and set $\cP:=\set{\sigma(\bsy)}{\bsy\in U}\subseteq Z$.
  There exists a constant $M>0$, two summability exponents
  $p\in (0,1)$, ${p_N}\in [p,1)$ and a FE method convergence rate $\alpha>0$
  such that, with $(\tau_{h_j})_{j\in\N}$ as in Assumption \ref{ass:tau}, 
  the following is satisfied
  \begin{enumerate}
  \item\label{item:weaker1}
    $(\norm[Z]{\psi_j})_{j\in\N_0}\in\ell^{p}(\N_0)$ and
    $(\norm[{Z_N}]{\psi_j})_{j\in\N_0}\in\ell^{{p_N}}(\N_0)$,
\item\label{item:weaker2} 
  there exists an open set $O_\cP\subseteq Z$ and a Fr\'echet
  differentiable function $\widetilde\frakE:O_\cP\to X$ such that
  $\cP\subseteq O_\cP$ and
  $\sup_{\parm\in O_\cP}\norm[X]{\widetilde\frakE(\parm)} \le M$,
\item\label{item:weaker3} 
  there exists an open set $O_{\cP,{N}}\subseteq{Z_N}$ and
  Fr\'echet differentiable functions $\widetilde\frakE_{h_j}:O_\cP\to X$ 
  for every $j\in\N$ such that $\cP\subseteq O_{\cP,{N}}$ 
  and for every $j\in\N$
\begin{equation}\label{eq:supzinsO}
  \sup_{\parm\in O_\cP} 
  \norm[X]{\widetilde\frakE(\parm)-\widetilde\frakE_{h_j}(\parm)}\le
  M\qquad\text{and}\qquad \sup_{\parm\in {O_{\cP,{N}}}}
  \norm[X]{\widetilde\frakE(\parm)-\widetilde\frakE_{h_j}(\parm)}\le M
  {\rm dim}(\bm{P}^c_k(\tau_{h_{j}}))^{-\alpha}.
\end{equation}
\end{enumerate}
The functions ${\widetilde\bE}:U\to X$ and ${\widetilde\bE}_{h_j}:U\to X$ 
are given by
$\widetilde\bE(\bsy) = \widetilde\frakE(\sigma(\bsy))$ 
and
$\widetilde\bE_{h_j}(\bsy) = \widetilde\frakE_{h_j}(\sigma(\bsy))$ 
for all
$\bsy\in U$ and $j\in\N$.
\end{assumption}
The following theorem follows \cite[Thm.~3.2.11]{JZdiss} and
\cite[Thm.~3.2.12]{JZdiss}. These bounds rely on the fact that
the sequence $({\rm dim} (\bm{P}^c_k(\tau_{h_j})))_{j\in\N}$ of FE 
degrees of freedom increases exponentially
in the sense of Remark~\ref{rmk:tauexp}.
\begin{theorem}\label{thm:MLint}
  Let $(\tau_{h_j})_{j\in\N}$ satisfy Assumption \ref{ass:tau}.  Let
  $p\in (0,1)$, ${p_N}\in [p,1)$ and $\alpha>0$ be such that $\hat\bE$
  and $(\hat\bE_{\sw})_{\sw\in\SW}$ satisfy Assumption
  \ref{ass:holweakerML}.  Then, there exists $C<\infty$ and
  \begin{enumerate}
    \item a sequence $(\Vwk_n)_{n\in\N}$ of
  sequences of work levels, such that $|\Vwk_n|\to\infty$ as
  $n\to\infty$ and
  \begin{equation*}
    \norm[{\boldsymbol{\mathcal{C}}}^0(U;X)]{\widetilde\bE - I_{\Vwk_n}^{\rm ML} 
      \widetilde\bE}\le C \wrk(\Vwk_n)^{-r_I},\qquad 
         r_I := \min\left\{\alpha,\frac{\alpha(p^{-1}-1)}{\alpha+p^{-1}-{p_N}^{-1}}\right\},
  \end{equation*}
\item a sequence $(\Vwk_n)_{n\in\N}$ of
  sequences of work levels, such  that
  $|\Vwk_n|\to\infty$ as $n\to\infty$ and
  \begin{equation*}
\normc[X]{\int_U \widetilde\bE(\bsy)\dd\mu(\bsy) - Q_{\Vwk_n}^{\rm ML} \widetilde\bE }
    \le C \wrk(\Vwk_n)^{-r_Q},\qquad r_Q 
     :=
    \min\left\{\alpha,\frac{\alpha(2p^{-1}-1)}{\alpha+2p^{-1}-2{p_N}^{-1}}\right\}. 
  \end{equation*}
  \end{enumerate}
  Moreover, in both cases $\Gamma_j(\Vwk_n)$ in \eqref{eq:Gammaj} is finite and
  downward closed for all $j\in\N$ and all $n\in\N$.
\end{theorem}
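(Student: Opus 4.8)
The plan is to obtain both estimates as an instance of the abstract multilevel Smolyak convergence theory of \cite[Thms.~3.2.11 and 3.2.12]{JZdiss}, whose hypotheses are precisely Assumption \ref{ass:holweakerML}. Since the statement is conditional on that assumption, the proof reduces to matching our objects to the abstract framework and transcribing the rates. Concretely, I would take $X=\hncurlbf{\Dcomp}$, identify the limit map $\widetilde\frakE$ with that of Theorem \ref{thm:holcont} and the discrete maps $\widetilde\frakE_{h_j}$ with those of Theorem \ref{thm:holdisc}, and note that item~(\ref{item:weaker3}) of Assumption \ref{ass:holweakerML} holds with finite element exponent $\alpha=k/3$: combining the $h_i^k$ bound of Proposition \ref{prop:contdisc} with the growth ${\rm dim}(\bm{P}^c_k(\tau_{h_i}))\sim s^{-3i}$ and $h_i\sim s^i$ from Assumption \ref{ass:tau} and Remark \ref{rmk:tauexp} yields ${\rm dim}(\bm{P}^c_k(\tau_{h_i}))^{-\alpha}\sim h_i^k$.

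Were I to reconstruct the argument rather than cite it, the first step would be to pass from Fréchet differentiability of $\widetilde\frakE$, $\widetilde\frakE_{h_j}$ on the neighbourhoods $O_\cP\subseteq Z$, $O_{\cP,{N}}\subseteq{Z_N}$ to quantitative summability of the coefficients in the tensorized Legendre (or Taylor) expansions of $\widetilde\bE$ and $\widetilde\bE_{h_j}$. Cauchy estimates on polydiscs whose radii are controlled by the two summability hypotheses of item~(\ref{item:weaker1})—$(\norm[Z]{\psi_j})_j\in\ell^p$ respectively $(\norm[{Z_N}]{\psi_j})_j\in\ell^{p_N}$—produce two families of coefficient bounds: one $\ell^p$-summable, governing interpolation and quadrature in the weak topology $Z$, and one $\ell^{p_N}$-summable, governing the gain extracted from finer finite element levels in ${Z_N}$. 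Through the polynomially growing Lebesgue constants of the $\R$-Leja points these translate into single-level best $N$-term estimates of order $|\Gamma|^{-(p^{-1}-1)}$ for interpolation and of the doubled order $|\Gamma|^{-(2p^{-1}-1)}$ for quadrature.

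The heart of the matter, and where the main difficulty lies, is the multilevel telescoping together with the a~priori allocation of work levels $\Vwk_n$. I would split $\widetilde\bE-I^{\rm ML}_{\Vwk}\widetilde\bE$ into the finest-level finite element error plus the sum over $j$ of the single-level Smolyak errors of the details $\widetilde\bE_{h_j}-\widetilde\bE_{h_{j-1}}$, and analogously for quadrature. Each detail is small in amplitude (of order ${\rm dim}(\bm{P}^c_k(\tau_{h_j}))^{-\alpha}$ by the finite element rate) yet still holomorphic with the two-scale coefficient bounds, so coarse, cheap levels can afford large index sets $\Gamma_j(\Vwk_n)$ while fine, expensive levels need only small ones. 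Choosing $(|\Gamma_j(\Vwk_n)|)_j$ to minimize the total error subject to the work budget \eqref{eq:wrk} is then a constrained optimization whose solution yields exactly the $\min$-expressions for $r_I$ and $r_Q$: the branch $\alpha$ is the regime in which the finite element error saturates the accuracy, the second branch the regime in which the parametric Smolyak error dominates, and the denominators $\alpha+p^{-1}-{p_N}^{-1}$, respectively $\alpha+2p^{-1}-2{p_N}^{-1}$, encode the loss incurred because the two summability scales differ. Finiteness and downward closedness of $\Gamma_j(\Vwk_n)$—needed for $I_{\Gamma_j}$ and $Q_{\Gamma_j}$ to be well defined—follow from the monotone threshold structure of \eqref{eq:Gammaj} and downward closedness of the optimal single-level sets; summing the per-level contributions and invoking the geometric growth of the finite element degrees of freedom then delivers the two displayed bounds.
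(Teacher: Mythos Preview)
Your opening sentence is exactly what the paper does: the paper states Theorem~\ref{thm:MLint} as a direct transcription of \cite[Thms.~3.2.11 and 3.2.12]{JZdiss} and does not give a proof beyond the remark that the exponential growth of the FE degrees of freedom (Remark~\ref{rmk:tauexp}) is what those results require. So in that sense your identification is correct and matches the paper.

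Where you drift is in the remainder of your first paragraph. Theorem~\ref{thm:MLint} is the \emph{abstract} statement: $X$, $Z$, ${Z_N}$, $\alpha$, $\widetilde\frakE$, $\widetilde\frakE_{h_j}$ are arbitrary objects satisfying Assumption~\ref{ass:holweakerML}. Your ``Concretely, I would take $X=\hncurlbf{\Dcomp}$, identify $\widetilde\frakE$ with that of Theorem~\ref{thm:holcont}, \dots, $\alpha=k/3$'' is not part of proving Theorem~\ref{thm:MLint} at all---it is the content of the \emph{next} result, Theorem~\ref{thm:MWint}, where Assumption~\ref{ass:holweakerML} is verified in the Maxwell setting and Theorem~\ref{thm:MLint} is then invoked. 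For Theorem~\ref{thm:MLint} itself there is nothing to match: the hypotheses are assumed, and the conclusion is quoted from \cite{JZdiss}.

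Your second and third paragraphs, sketching how the abstract argument in \cite{JZdiss} runs (Cauchy estimates on polydiscs, two summability scales, telescoping, work allocation), are a reasonable high-level outline of the cited proof and go well beyond what the paper provides. They are not wrong, but they are not required here, and one should be careful not to present them as a proof of the theorem in this paper: the paper treats Theorem~\ref{thm:MLint} as a black-box citation.
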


\subsubsection{Multilevel Interpolation}
Applying Thm.~\ref{thm:MLint} in our setting we obtain the following
theorem for sparse-grid approximation.

\begin{theorem}\label{thm:MWint}
  Fix $N\in\N$ and $q>3$. Let $k\in\N$ be less or equal to $N$ where
  $k$ denotes the polynomial degree of the FEM ansatz space. For some
  $p\in (0,1)$, ${p_N}\in [p,1)$ let (cp.~\eqref{eq:sZ}, \eqref{eq:Z})
  \begin{equation*}
    (\norm[Z]{\widetilde\bT_j})_{j\in\N_0}\in\ell^{p}(\N_0),\qquad
    (\norm[{Z_N}]{\widetilde\bT_j})_{j\in\N_0}\in\ell^{p_s}(\N_0).
  \end{equation*}
  Suppose that with $\frakT=\set{\sigma(\bsy)}{\bsy\in U}$ 
  as in
  \eqref{eq:random_var}--\eqref{eq:frakTexpand}, 
  Assumptions \ref{ass:material_trafo_smooth} through \ref{ass:param} are
  satisfied. 
  Then, there exists $C<\infty$ and a sequence $(\Vwk_n)_{n\in\N}$ of
  sequences of work levels, such that, $|\Vwk_n|\to\infty$ as
  $n\to\infty$ and for all $n\in\N$, the solution $\widetilde\bE(\bsy):=\widetilde\bE_{\sigma(\bsy)}$ of
  Problem \ref{prob:weak_comp_T} satisfies
  \begin{equation*}
    \norm[{\boldsymbol{\mathcal{C}}}^0(U;X)]{\widetilde\bE(\cdot) - (I_{\Vwk_n}^{\rm ML} \widetilde\bE)(\cdot)}\le C
    \wrk(\Vwk_n)^{-r_I}\quad\text{with}\quad r_I =
    \min\left\{\frac{k}{3},\frac{\frac{k}{3}(p^{-1}-1)}{\alpha+p^{-1}-{p_N}^{-1}}\right\},
  \end{equation*}
  and where $I_{\Vwk_n}^{\rm ML} \widetilde\bE$ in \eqref{eq:MLint} is
  defined with the discrete solutions
  $\widetilde\bE_{h_j}\in\bm{P}^c_k(\tau_{h_{j}})$ of
  \eqref{eq:weakh}. Moreover, $\Gamma_j(\Vwk_n)$ in \eqref{eq:Gammaj} is
  finite and downward closed for all $j\in\N$ and all $n\in\N$.
\end{theorem}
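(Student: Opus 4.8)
The plan is to derive the statement directly from the abstract multilevel convergence result Theorem~\ref{thm:MLint}, by recognizing the present parametric problem as an instance of Assumption~\ref{ass:holweakerML}. Concretely, I would make the identifications $X=\hncurlbf{\Dcomp}$, $Z=\bm\cC^{0,1}(\Dnul;\IC^3)$ and ${Z_N}=\bm\cC^{N,1}(\Dnul;\IC^3)$ as in \eqref{eq:sZ}--\eqref{eq:Z}, take $\psi_j:=\widetilde\bT_j$ as the parametrising sequence of the affine expansion \eqref{eq:random_var} (the fixed offset $\bT_0$ merely translates $\frakT$ together with its holomorphy neighbourhoods and is harmless for summability and holomorphy), and --- crucially --- set the abstract finite element convergence rate to $\alpha=k/3$. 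Once this identification is in place, the whole argument reduces to verifying the three items of Assumption~\ref{ass:holweakerML} and invoking Theorem~\ref{thm:MLint}(i).

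Item~(\ref{item:weaker1}) (summability) is exactly the standing hypothesis $(\norm[Z]{\widetilde\bT_j})_{j}\in\ell^p$ and $(\norm[{Z_N}]{\widetilde\bT_j})_{j}\in\ell^{p_N}$. Item~(\ref{item:weaker2}) is furnished by Theorem~\ref{thm:holcont}, which provides an open set $O_{\frakT}\subseteq Z$ containing $\frakT$ and a holomorphic --- hence Fréchet differentiable --- map $\widetilde\frakE:O_{\frakT}\to X$ with the uniform bound \eqref{eq:exactM}; thus one sets $O_\cP:=O_{\frakT}$ and reads off a suitable $M$. For item~(\ref{item:weaker3}), the holomorphy and uniform boundedness of the discrete maps $\widetilde\frakE_{h_j}:O_{\frakT}\to\bm{P}^c_k(\tau_{h_j})\subseteq X$ come from Theorem~\ref{thm:holdisc} together with \eqref{eq:exactM_h}, and the coarse bound $\sup_{\parm\in O_{\frakT}}\norm[X]{\widetilde\frakE(\parm)-\widetilde\frakE_{h_j}(\parm)}\le M$ then follows at once by the triangle inequality from \eqref{eq:exactM} and \eqref{eq:exactM_h}.

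The substantive point is the sharp approximation bound in \eqref{eq:supzinsO} over the smaller, smoother set $O_{\cP,{N}}:=O_{{N},\frakT}\subseteq{Z_N}$ produced by Proposition~\ref{prop:contcont}. Proposition~\ref{prop:contdisc} already delivers
\begin{equation*}
  \sup_{\bT\in O_{{N},\frakT}}\norm[\hcurlbf{\Dcomp}]{\widetilde\frakE(\bT)-\widetilde\frakE_{h_i}(\bT)}\le C\,B_{{N},{\frakT},\bJ}\,h_i^k ,
\end{equation*}
so it only remains to re-express the right-hand side in terms of the number of degrees of freedom. Using item~(\ref{item:tau:sizes}) of Assumption~\ref{ass:tau} and the two-sided estimate \eqref{eq:dof_s} of Remark~\ref{rmk:tauexp}, one has $h_i\asymp s^i$ and ${\rm dim}(\bm{P}^c_k(\tau_{h_i}))\asymp s^{-3i}$, whence $h_i^k\asymp {\rm dim}(\bm{P}^c_k(\tau_{h_i}))^{-k/3}$. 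This is precisely the bound $M\,{\rm dim}(\bm{P}^c_k(\tau_{h_j}))^{-\alpha}$ required in \eqref{eq:supzinsO} with $\alpha=k/3$, which is what fixes the abstract FE rate in Theorem~\ref{thm:MLint} to be $k/3$ and makes the stated $r_I$ appear after substitution.

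With all three items verified, Theorem~\ref{thm:MLint}(i) supplies the sequence $(\Vwk_n)_{n\in\N}$ of work levels with $|\Vwk_n|\to\infty$, the finiteness and downward-closedness of each $\Gamma_j(\Vwk_n)$ in \eqref{eq:Gammaj}, and the convergence rate $r_I$ with $\alpha=k/3$; composing $\widetilde\frakE$ and $\widetilde\frakE_{h_j}$ with the continuous map $\sigma$ of \eqref{eq:random_var} identifies the abstract interpolant with $I_{\Vwk_n}^{\rm ML}\widetilde\bE$ in \eqref{eq:MLint}, completing the proof. I expect the only real obstacle to be bookkeeping rather than analysis: one must check that the two holomorphy neighbourhoods $O_{\frakT}\subseteq Z$ and $O_{{N},\frakT}\subseteq{Z_N}$ from the earlier results genuinely play the roles of $O_\cP$ and $O_{\cP,{N}}$, and that the $h_i^k\to{\rm dim}^{-k/3}$ conversion holds with constants uniform in $j$. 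All the genuine mathematical content --- the shape holomorphy of Theorems~\ref{thm:holcont}--\ref{thm:holdisc} and the piecewise $\Wsobpw{N,q}{\curl;\Dcomp}$-regularity underlying Proposition~\ref{prop:contdisc} --- is already established.
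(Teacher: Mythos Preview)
Your proposal is correct and follows essentially the same approach as the paper: identify $X=\hncurlbf{\Dcomp}$, $Z$, ${Z_N}$ and $\alpha=k/3$, verify the three items of Assumption~\ref{ass:holweakerML} via the summability hypothesis, Theorem~\ref{thm:holcont}, and Proposition~\ref{prop:contdisc} (converting $h_i^k$ to ${\rm dim}(\bm{P}^c_k(\tau_{h_j}))^{-k/3}$), and then invoke Theorem~\ref{thm:MLint}(i). Your write-up is in fact a bit more explicit than the paper's about the coarse bound in \eqref{eq:supzinsO} (via the triangle inequality and \eqref{eq:exactM}, \eqref{eq:exactM_h}) and the $h_i\to{\rm dim}$ conversion via Assumption~\ref{ass:tau} and Remark~\ref{rmk:tauexp}, but the structure is the same.
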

\begin{proof}
  We need to verify that Assumption \ref{ass:holweakerML} holds with
  $\alpha:=\frac{k}{3}$ and the spaces
  \begin{equation*}
    X:=\hncurlbf{\widetilde{\D}},\qquad
    Z:=\bm\cC^{0,1}(\Dnul;\IC^{3})\qquad\text{and}\qquad
    {Z_N}:=\bm\cC^{N,1}(\Dnul;\IC^{3}).
  \end{equation*}
  Then the statement is an immediate consequence of
  Thm.~\ref{thm:MLint}. In the following we choose the constant
  $k\in\N$ equal to $N$.

  Assumption \ref{ass:holweakerML} (\ref{item:weaker1}) holds by
  assumption. The existence of an open set $O_{\frakT}\subseteq Z$
  containing $\frakT$ and a bounded holomorphic function
  $\widetilde\frakE:O_{\frakT}\to X$ such that
  $\widetilde\bE(\bsy)=\widetilde\frakE(\sigma(\bsy))$ for all
  $\bsy\in U$ follows by Theorem \ref{thm:holcont}.  This shows
  Assumption \ref{ass:holweakerML} (\ref{item:weaker2}). Finally,
  Prop.~\ref{prop:contdisc} implies the existence of a suitable open
  set $O_{{N},\frakT}\subseteq{Z_N}$
  containing $\frakT$, and constant $M$, and bounded holomorphic maps
  $\widetilde\frakE:O_{\frakT}\to X$ such that
\begin{equation*}
  \sup_{\parm\in O_{\frakT}} \norm[\hncurlbf{\widetilde{\D}}]{\widetilde\frakE(\parm)-\widetilde\frakE_{h_j}(\parm)}\le
  M
\end{equation*}
and
\begin{equation*}
  \sup_{\parm\in {O_{\cP,{N}}}}
  \norm[\hncurlbf{\widetilde{\D}}]{\widetilde\frakE(\parm)-\widetilde\frakE_{h_j}(\parm)}\le
  C h_i^{k}\le C
  {\rm dim}(\bm{P}^c_k(\tau_{h_{j}}))^{-\frac{k}{3}}.
\end{equation*}
  This shows that Assumption \ref{ass:holweakerML} (\ref{item:weaker3})
  is satisfied.
\end{proof}

\subsubsection{Multilevel Quadrature}
In the same fashion, we obtain a result for multilevel
  quadrature.
  
\begin{theorem}\label{thm:MWquad}
  Let the assumptions of Theorem~\ref{thm:MWint} be satisfied.  Then
  there exists $C<\infty$ and a sequence $(\Vwk_n)_{n\in\N}$ of
  sequences of work levels, such that $|\Vwk_n|\to\infty$ as
  $n\to\infty$ and for all $n\in\N$ the solution $\widetilde\bE(\bsy)$ of
  Problem \ref{prob:weak_comp_T} satisfies
  \begin{equation*}
    \norm[\hncurlbf{\widetilde{\D}}]{\mathbb{E}(\widetilde\bE(\cdot)) - (Q_{\Vwk_n}^{\rm ML} \widetilde\bE)(\cdot)}\le C
    \wrk(\Vwk_n)^{-r_Q}\quad\text{where}\quad r_Q =
    \min\left\{\frac{k}{3},\frac{\frac{k}{3}(2p^{-1}-1)}{\alpha+2p^{-1}-2{p_N}^{-1}}\right\},
  \end{equation*}
  where $Q_{\Vwk_n}^{\rm ML} \widetilde\bE$ in \eqref{eq:MLquad} is
  defined with the discrete solutions
  $\widetilde\bE_{h_j}\in\bm{P}^c_k(\tau_{h_{j}})$ of
  \eqref{eq:weakh}. Moreover $\Gamma_j(\Vwk_n)$ in \eqref{eq:Gammaj}
  is finite and downward closed for all $j\in\N$ and all $n\in\N$.
\end{theorem}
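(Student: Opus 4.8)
The strategy for Theorem~\ref{thm:MWquad} is to mirror exactly the argument given for Theorem~\ref{thm:MWint}: we verify that Assumption~\ref{ass:holweakerML} holds with the same spaces and FE rate, and then invoke the quadrature half (part (ii)) of the abstract convergence result Theorem~\ref{thm:MLint}. Concretely, I would set
\begin{equation*}
  X:=\hncurlbf{\Dcomp},\qquad Z:=\bm\cC^{0,1}(\Dnul;\IC^{3}),\qquad
  {Z_N}:=\bm\cC^{N,1}(\Dnul;\IC^{3}),
\end{equation*}
take the summability sequence $(\psi_j)_{j\in\N}=(\widetilde\bT_j)_{j\in\N}$ from the expansion \eqref{eq:random_var}, and put the FE convergence rate $\alpha:=\tfrac{k}{3}$. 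Since these are precisely the spaces and the rate already validated in the proof of Theorem~\ref{thm:MWint}, the three hypotheses of Assumption~\ref{ass:holweakerML} require no new work.

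First I would recall that item~(\ref{item:weaker1}) of Assumption~\ref{ass:holweakerML} is the summability hypothesis $(\norm[Z]{\widetilde\bT_j})_{j}\in\ell^{p}(\N_0)$ and $(\norm[{Z_N}]{\widetilde\bT_j})_{j}\in\ell^{p_N}(\N_0)$, which is assumed in the statement of Theorem~\ref{thm:MWint} (and inherited here). Item~(\ref{item:weaker2}) follows from Theorem~\ref{thm:holcont}, which furnishes the open set $O_{\frakT}\subseteq Z$, the holomorphic---hence Fr\'echet differentiable---map $\widetilde\frakE:O_{\frakT}\to X$, and the uniform bound \eqref{eq:exactM}. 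Item~(\ref{item:weaker3}) follows from Theorem~\ref{thm:holdisc} together with Proposition~\ref{prop:contdisc}: the former provides the discrete maps $\widetilde\frakE_{h_j}$ with the uniform bound \eqref{eq:exactM_h}, giving the first estimate in \eqref{eq:supzinsO}, while the latter provides the convergence estimate \eqref{eq:holo_conv}, which by Assumption~\ref{ass:tau} and Remark~\ref{rmk:tauexp} converts into
\begin{equation*}
  \sup_{\parm\in O_{{N},\frakT}}
  \norm[X]{\widetilde\frakE(\parm)-\widetilde\frakE_{h_j}(\parm)}
  \le C\,h_j^{k}\le C\,{\rm dim}(\bm{P}^c_k(\tau_{h_{j}}))^{-k/3},
\end{equation*}
which is exactly the second estimate in \eqref{eq:supzinsO} with $\alpha=k/3$. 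This is the same chain of implications used in the proof of Theorem~\ref{thm:MWint}.

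With all three items of Assumption~\ref{ass:holweakerML} in hand, Theorem~\ref{thm:MLint}(ii) applies verbatim and yields the existence of $C<\infty$ and a sequence $(\Vwk_n)_{n\in\N}$ of work levels with $|\Vwk_n|\to\infty$ such that the multilevel quadrature $Q_{\Vwk_n}^{\rm ML}\widetilde\bE$ in \eqref{eq:MLquad} satisfies the claimed bound with
\begin{equation*}
  r_Q=\min\left\{\frac{k}{3},\frac{\frac{k}{3}(2p^{-1}-1)}{\alpha+2p^{-1}-2{p_N}^{-1}}\right\},
\end{equation*}
and the downward-closedness and finiteness of $\Gamma_j(\Vwk_n)$ come along with the abstract statement. \textbf{The one point requiring care} is that the target approximated in Theorem~\ref{thm:MLint}(ii) is $\int_U\widetilde\bE(\bsy)\dd\mu(\bsy)$, so I would note explicitly that, by the Bochner integrability of $\widetilde\bE$ established in Proposition~\ref{prop:integ_E_Eh}, this integral coincides with $\mathbb{E}(\widetilde\bE)$ in the $X$-norm as stated. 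Beyond this bookkeeping, the proof is purely a reduction to the abstract theorem, and I expect no genuine obstacle; the substantive analytic content was already discharged in Proposition~\ref{prop:contcont}, Proposition~\ref{prop:contdisc} and Theorem~\ref{thm:MLint}.
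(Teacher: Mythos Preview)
Your proposal is correct and follows essentially the same approach as the paper: verify that Assumption~\ref{ass:holweakerML} holds by the identical arguments used in the proof of Theorem~\ref{thm:MWint}, and then invoke the quadrature part of Theorem~\ref{thm:MLint}. The paper's proof is in fact even terser, simply stating that Assumption~\ref{ass:holweakerML} holds by the same arguments and that the statement follows from Theorem~\ref{thm:MLint}; your additional bookkeeping (e.g., identifying $\int_U\widetilde\bE\,\dd\mu$ with $\mathbb{E}(\widetilde\bE)$ via Proposition~\ref{prop:integ_E_Eh}) is a harmless elaboration.
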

\begin{proof}
  Assumption \ref{ass:holweakerML} holds by the same arguments as in
  the proof of Thm.~\ref{thm:MWint}. The statement thus follows by
  Thm.~\ref{thm:MLint}.
\end{proof}

\section{Numerical experiments}
\label{sec:experiments}
We now present a numerical experiment in order
to confirm our results in Theorems \ref{thm:MLMC_E}, \ref{thm:MLint},
\ref{thm:MWint} and \ref{thm:MWquad}.  
Our numerical implementation of
Problem \ref{prob:weak_nom_T} was carried out through the open source
softwares GetDP \cite{getdp} and GMSH \cite{geuzaine2009gmsh}.
\subsection{Problem Setting}
\label{ssec:Num_prob_sett}
For $N_c\in\IN$, we consider
$\widetilde\D := [-1,1]^3$ and parametric transformations given by,
\begin{align}
\sigma(\bsy):=\bT_0 + \Theta\sum_{j=1}^{N_c} y_j 
  \bT_j,\quad\bT_0=\Id, \quad\bT_j=j^{-\rho-1}x_3\sin(2\pi jx_1)\hat{\bm{e}}_3, 
\end{align}
where $\Theta\in (0,\half)$ is a scale parameter and 
$\rho>1$ determines the decay properties of the sequences
$(\norm[Z]{\bT_j})_{j\in\N_0}$ and $(\norm[{Z_N}]{\bT_j})_{j\in\N_0}$. 
Specifically, with ${Z_N}$ and $N\in\IN$ as in \eqref{eq:sZ} 
and $\rho>N$, we impose
\begin{align}
\begin{gathered}
(\norm[Z]{\bT_j})_{j\in\N_0}\in\ell^{p},\quad\forall\ 1> p> \frac{1}{\rho},\\
(\norm[{Z_N}]{\bT_j})_{j\in\N_0}\in\ell^{{p_N}},\quad\forall\ 1> {p_N}> \frac{1}{\rho-N}.
\end{gathered}
\end{align}
We fix the current $\bJ$ as a polynomial of first degree on $\D_H:=[-2,2]^3$
and choose $\epsilon$ and $\mu$ as factors of the identity matrix. The
quadratures ${Q}_{\rK}^1$ and ${Q}_{\rK}^2$ used to build the sesquilinear
and antilinear forms in \eqref{eq:aht} and \eqref{eq:fht}, respectively, are
$5$-points Gaussian quadrature rules---exact on polynomials of degree $3$---
and we consider only first order N\'ed\'elec elements for the discretization of
$\hncurlbf{\widetilde{\D}}$, i.e., $k=1$ in Section \ref{ssec:fe}. Theorem
\ref{thm:approx} then requires $N\geq 1$ to ensure a convergence rate
up to $\rho=1$ with respect to the mesh-size $h$---or $\frac{1}{3}$ with
respect to the dimension of $\bm{P}^c_1(\tau_h)$.

Numerical experiments---for brevity, not presented here---verify
the convergence rate of order $N=1$ with respect to the mesh size.
Moreover, for linear functionals of the electric fields
$G\in\hncurlbf{\widetilde{\D}}^*$ we may prove, 
through Theorem 4.2.14 in \cite{SauterSchwabBEM}, that 
$G(\widetilde\frakE_h(\bT))$ converges to
$G(\widetilde\frakE(\bT))$ at twice the rate with respect to the mesh-size
($\rho=2$ in our context) at which $\widetilde\frakE_h(\bT)$ 
converges to $\widetilde\frakE(\bT)$. 
We choose $G$ as
\begin{align}
G(\bU):=\int_\Dnul\bm{g}(\bx)\cdot\overline{\bU(\bx)}\d\!\bx,
\end{align}
for all $\bU\in\hcurlbf{\widetilde{\D}}$, where $\bm{g}$
is chosen as a polynomial in $\bbP_2(\D_H;\IR^3)$. Hence, for any $\bU\in\bm{P}^c_1(\tau_h)$,
we may compute $G(\bU_h)$ exactly through the use of appropriate Gaussian
quadrature rules. Henceforth, we concern ourselves only with the approximation
of $G(\widetilde\bE(\bsy))$ for $\bsy\in U$ and of its expected value over $U$.
Moreover, for a fair comparison between the MLMC method
and the multilevel Smolyak algorithm, 
we truncate the dimension of the sample space, so that we consider $U:=[-1,1]^{50}$. 

\subsection{Number of samples for the MLMC method}

For our multilevel algorithms, we employ five meshes of the domain $\widetilde{\D}$, 
with
$323$, $3'208$, $16'009$, $117'370$ and $707'141$ degrees of freedom. Table
\ref{tbl:mlmc_sample} indicates the number of samples taken on each mesh for
the multilevel Monte Carlo method. The expected value of the squared error
is then computed as the average over 6 realizations of the method.
\begin{table}[h!]
\begin{tabular}{|c|c|c|c|c|c|}\hline
{DoF} & $L=1$ & $L=2$ & $L=3$ & $L=4$ & $L=5$ \\ \hline
323    & 1 & 9 & 69 & 1025 & 11271 \\
3'208   & - & 2 & 13 & 203  & 2273  \\
16'009  & - & - & 4  & 48   & 537   \\
11'7370 & - & - & -  & 6    & 69    \\
707'141 & - & - & -  & -    & 9 \\ \hline
\end{tabular}
\caption{Number of samples per mesh (identified with their corresponding
degrees of freedom) used in each realization of the MLMC
method. We remark that these do not correspond to the $\{\cN_{L,i}\}_{i=1}^L$
in \eqref{eq:MLMC_defi}, but to the total number of computations carried on
each mesh (e.g., for $L=3$ we take $\cN_{3,1}=60$, $\cN_{3,2}=9$ and $\cN_{3,3}=4$).}
\label{tbl:mlmc_sample}
\end{table}

\subsection{Interpolation and quadrature results}
Figure \ref{fig:interp} displays the interpolation error:
\begin{align}\label{eq:int_error}
\norm[\cC^0(U,\IC)]{G({\widetilde\bE}(\cdot)) - (I_{\Vwk_n}^{\rm ML} G({\widetilde\bE}))(\cdot)},
\end{align}
with respect to the total work of the Smolyak algorithm (as in \eqref{eq:wrk}),
where the supremum in the computation of the $\cC^0(U)$-norm in
\eqref{eq:int_error} is approximated by taking the maximum of 
$G(\widehat\bE(\bsy)) - (I_{\Vwk_n}^{\rm ML} G(\widehat\bE))(\bsy)$ on random points in $U$. 
Figure \ref{fig:quad}, on the other hand, displays the quadrature errors
of the multilevel Smolyak algorithm
\begin{align}
\normc[{\lp{2}{
\Omega,\IC}}]{ Q_{\Vwk_n}^{\rm ML} G({\widetilde\bE})-\mathbb{E}\left(G({\widetilde\bE})\right)},
\end{align}
against the total work of the algorithm (as in \eqref{eq:wrk}), and of
the MLMC method, %
\begin{align}
\norm[\lp{2}{{
\Omega,\IC}}]{
Q^{\rm{MLMC}}_L(G({\widetilde\bE}))-\mathbb{E}\left(G({\widetilde\bE})\right)},
\end{align}
against its corresponding total work (as in \eqref{eq:MLMC_tot_work}),
where $\mathbb{E}(G({\widetilde\bE}))$ is estimated through an overkill computation of the
multilevel Smolyak algorithm. The figures display only the results computed with the first four meshes,
so that the comparison against the overkill computation shows both the meshing error---coming from
the finite element discretization---and the quadrature error---arising from both algorithms.
  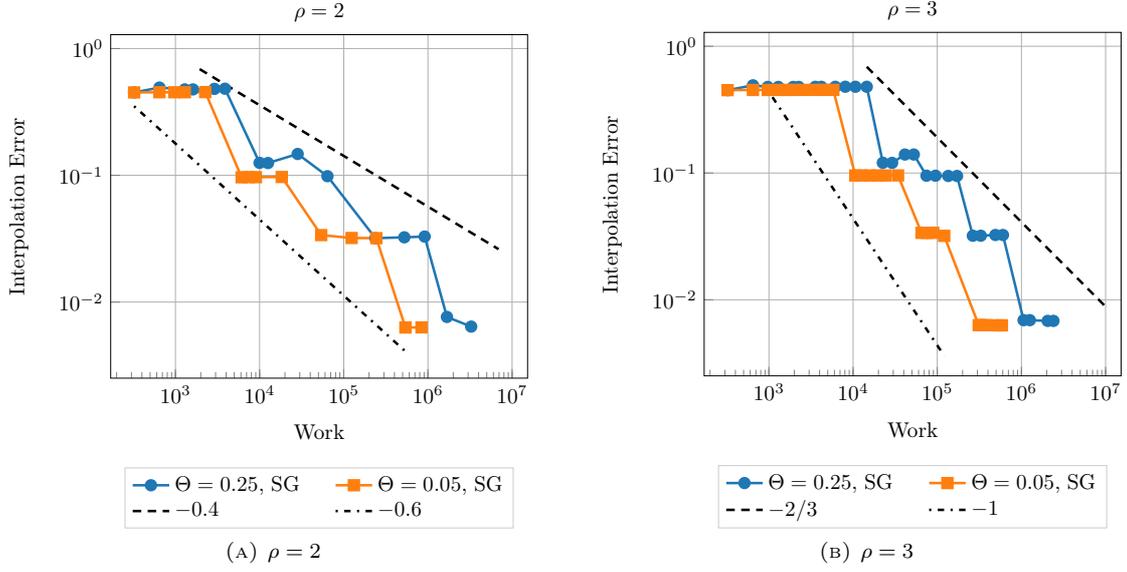
\begin{figure}[ht!]
    \begin{center}
      \subfloat[$\rho=2$]{\begin{tikzpicture}[scale=0.8]

%\definecolor{color1}{rgb}{1,0.498039215686275,0.0549019607843137}
%\definecolor{color0}{rgb}{0.12156862745098,0.466666666666667,0.705882352941177}
%\definecolor{color6}{rgb}{0.149019607843137,0.537254901960784,0.894117647058824}
%\definecolor{color7}{rgb}{0.880392156862745,0.003921568627451,0.041176470588235}
%\definecolor{color8}{rgb}{0.549019607843137,0.537254901960784,0.094117647058824}
%\definecolor{color9}{rgb}{1,1,1}
\definecolor{color1}{rgb}{1,0.498039215686275,0.0549019607843137}
\definecolor{color0}{rgb}{0.12156862745098,0.466666666666667,0.705882352941177}
\definecolor{color6}{rgb}{0.149019607843137,0.537254901960784,0.894117647058824}
\definecolor{color7}{rgb}{0.880392156862745,0.003921568627451,0.041176470588235}
\definecolor{color8}{rgb}{0.549019607843137,0.537254901960784,0.094117647058824}
\definecolor{color9}{rgb}{0.003921568627451,0.880392156862745,0.09141176470588235}

\begin{axis}[
title={$\rho=2$},
xlabel={Work},
ylabel={Interpolation Error},
% ylabel={$|\int_{[-1,1]^{50}} G({\wt
%     \bE}_{T(\bsy),h}\dd\mu_{50}(\bsy)-{\rm Quad}(G({\wt
%     \bE}_{T(\cdot),h}))|$}, 
% ylabel={$|\int_U G(\hat{\bf E}_{h}({\bf y})d\mu({\bf y})-Q_{\Lambda}G(\hat{\bf E}_{h})|$},
xmin=170.132826152243, xmax=15269368.650583,
xmode=log,
xtick={10,100,1000,10000,100000,1000000,10000000,100000000,1000000000,10000000000},
xticklabels={${10^{1}}$,${10^{2}}$,${10^{3}}$,${10^{4}}$,${10^{5}}$,${10^{6}}$,${10^{7}}$,${10^{8}}$,${10^{9}}$,${10^{10}}$},
y grid style={lightgray!92.02614379084967!black},
ymajorgrids,
ymin=2.51149449901867e-3, ymax=1.2868471525574,
ymode=log,
ytick={1e-17,1e-15,1e-13,1e-11,1e-09,1e-07,1e-05,0.001,0.01,0.1,1,10,1000,100000},
yticklabels={${10^{-17}}$,${10^{-15}}$,${10^{-13}}$,${10^{-11}}$,${10^{-9}}$,${10^{-7}}$,${10^{-5}}$,${10^{-3}}$,${10^{-2}}$,${10^{-1}}$,${10^{0}}$,${10^{1}}$,${10^{3}}$,${10^{5}}$},
tick align=inside,%\ra{modified from "outside" on rev1}
tick pos=left,
xmajorgrids,
x grid style={lightgray!92.02614379084967!black},
ymajorgrids,
y grid style={lightgray!92.02614379084967!black},
legend style={draw=white!80.0!black},
% legend style={at={(-0.05,-0.4)},anchor=west},%\ra{added on rev1}
legend cell align={left},
legend entries={
  {$\Theta=0.25$, SG},
  {$\Theta=0.05$, SG},
  {$-0.4$},{$-0.6$}
  },
legend columns = 2
%transpose legend
]
\addplot [very thick, color0, mark=*, mark size=2.3, mark options={solid}]
table [row sep=\\]{%SMK,0.25
323	0.450632738540904 \\
646	0.492045426892736 \\
1292	0.476079413834599 \\
1615	0.475951005145567 \\
2907	0.480578963365201 \\
3876	0.48206076837298 \\
9991	0.12537479330222 \\
12575	0.125006383233828 \\
28336	0.147361749650599 \\
63991	0.09841078638774 \\
237926	0.0319152822583472 \\
523840	0.0324291268142763 \\
913113	0.0328750940071262 \\
1681922	0.00763392192519853 \\
3263446	0.00640504312232883 \\
%8199145	0.0012474219700274 \\
%20711167	0.000916981715673785 \\
%47788902	0.000439391674683577 \\
%110859919	0.00028106564304492 \\
%297948103	0.000228229550117204 \\
};

\addplot [very thick, color1, mark=square*, mark size=2.3, mark options={solid}]
table [row sep=\\]{%SMK 0.05
323	0.450663828414317 \\
646	0.451979609433218 \\
969	0.452007332832455 \\
1292	0.452045166224998 \\
2261	0.453440594277697 \\
6115	0.0964814224961098 \\
7407	0.0968138045266976 \\
9022	0.0967159467679391 \\
18345	0.0969806866957486 \\
53947	0.0337661770947925 \\
123921	0.0320360276024126 \\
243630	0.0319470629536987 \\
540028	0.0063034244045132 \\
838619	0.00631719697527546 \\
%2310640	0.000226362840832556 \\
%4216286	0.000154025338368221 \\
%9767842	0.00013472591917514 \\
%20003522	6.05431171679018e-05 \\
%45722115	3.63455918743739e-05 \\
%117933741	1.24919001620133e-05 \\
};

\addplot [very thick, black, dash pattern=on 4pt off 3pt]
table [row sep=\\]{%rate 0.4
1938 0.688645822328153 \\
6909902  0.02613055723\\
};

\addplot [very thick, black, dash pattern=on 1pt off 3pt on 3pt off 3pt]
table [row sep=\\]{%rate 0.6
323	0.350840726049586\\
540028 0.004084902994\\
};

\end{axis}

\end{tikzpicture}}
      \hfill
      \subfloat[$\rho=3$]{\begin{tikzpicture}[scale=0.8]

%\definecolor{color1}{rgb}{1,0.498039215686275,0.0549019607843137}
%\definecolor{color0}{rgb}{0.12156862745098,0.466666666666667,0.705882352941177}
%\definecolor{color6}{rgb}{0.149019607843137,0.537254901960784,0.894117647058824}
%\definecolor{color7}{rgb}{0.880392156862745,0.003921568627451,0.041176470588235}
%\definecolor{color8}{rgb}{0.549019607843137,0.537254901960784,0.094117647058824}
%\definecolor{color9}{rgb}{1,1,1}
\definecolor{color1}{rgb}{1,0.498039215686275,0.0549019607843137}
\definecolor{color0}{rgb}{0.12156862745098,0.466666666666667,0.705882352941177}
\definecolor{color6}{rgb}{0.149019607843137,0.537254901960784,0.894117647058824}
\definecolor{color7}{rgb}{0.880392156862745,0.003921568627451,0.041176470588235}
\definecolor{color8}{rgb}{0.549019607843137,0.537254901960784,0.094117647058824}
\definecolor{color9}{rgb}{0.003921568627451,0.880392156862745,0.09141176470588235}

\begin{axis}[
title={$\rho=3$},
xlabel={Work},
ylabel={Interpolation Error},
% ylabel={$|\int_{[-1,1]^{50}} G({\wt
%     \bE}_{T(\bsy),h}\dd\mu_{50}(\bsy)-{\rm Quad}(G({\wt
%     \bE}_{T(\cdot),h}))|$}, 
% ylabel={$|\int_U G(\hat{\bf E}_{h}({\bf y})d\mu({\bf y})-Q_{\Lambda}G(\hat{\bf E}_{h})|$},
xmin=170.132826152243, xmax=15269368.650583,
xmode=log,
xtick={10,100,1000,10000,100000,1000000,10000000,100000000,1000000000,10000000000},
xticklabels={${10^{1}}$,${10^{2}}$,${10^{3}}$,${10^{4}}$,${10^{5}}$,${10^{6}}$,${10^{7}}$,${10^{8}}$,${10^{9}}$,${10^{10}}$},
y grid style={lightgray!92.02614379084967!black},
ymajorgrids,
ymin=2.51149449901867e-3, ymax=1.2868471525574,
ymode=log,
ytick={1e-17,1e-15,1e-13,1e-11,1e-09,1e-07,1e-05,0.001,0.01,0.1,1,10,1000,100000},
yticklabels={${10^{-17}}$,${10^{-15}}$,${10^{-13}}$,${10^{-11}}$,${10^{-9}}$,${10^{-7}}$,${10^{-5}}$,${10^{-3}}$,${10^{-2}}$,${10^{-1}}$,${10^{0}}$,${10^{1}}$,${10^{3}}$,${10^{5}}$},
tick align=inside,%\ra{modified from "outside" on rev1}
tick pos=left,
xmajorgrids,
x grid style={lightgray!92.02614379084967!black},
ymajorgrids,
y grid style={lightgray!92.02614379084967!black},
legend style={draw=white!80.0!black},
% legend style={at={(-0.05,-0.4)},anchor=west},%\ra{added on rev1}
legend cell align={left},
legend entries={
  {$\Theta=0.25$, SG},
  {$\Theta=0.05$, SG},
  {$-2/3$},{$-1$}
  },
legend columns = 2
%transpose legend
]
\addplot [very thick, color0, mark=*, mark size=2.3, mark options={solid}]
table [row sep=\\]{%SMK,0.25
323	0.450658790357831 \\
646	0.491707608721829 \\
969	0.477665490281295 \\
1292	0.477516444322703 \\
1938	0.477466585301284 \\
2261	0.477590863513772 \\
3553	0.478460908494066 \\
4199	0.47850403373688 \\
6137	0.478814511563118 \\
8075	0.479314290339491 \\
10659	0.47911508204383 \\
14535	0.47916713131277 \\
22588	0.120409445461039 \\
29048	0.12022492669053 \\
41300	0.139903962781706 \\
52605	0.139952468578181 \\
74525	0.0952214469888753 \\
95197	0.0952216115005821 \\
135161	0.0948038117746541 \\
171660	0.0948054203352641 \\
264668	0.0320136160780451 \\
328622	0.0320134504439138 \\
490532	0.0324442793172223 \\
602936	0.0324440564176664 \\
1061596	0.0069065432230731 \\
1258949	0.00690649082155005 \\
2053834	0.00681587631027597 \\
2400413	0.0068158941483519 \\
%4755836	0.000553957688848173 \\
%5361784	0.000553927510009425 \\
%9484915	0.000644246167238119 \\
%10538541	0.00064423108803339 \\
%19380112	0.000138123945726841 \\
%21211522	0.000138126791226642 \\
%40115359	0.000180687771236993 \\
%43302077	0.000180687263564272 \\
%84762936	3.18376767150496e-05 \\
%90295603	3.18372998753084e-05 \\
%181437516	3.98463060153395e-05 \\
};

\addplot [very thick, color1, mark=square*, mark size=2.3, mark options={solid}]
table [row sep=\\]{%SMK 0.05
323	0.450664764566494 \\
646	0.452005973356029 \\
969	0.452019843792242 \\
1292	0.452470493233708 \\
1615	0.452474923607599 \\
1938	0.452613849450092 \\
2584	0.452684350441069 \\
3553	0.452726196482525 \\
4522	0.452767494555836 \\
5814	0.452765700504013 \\
10637	0.0956198665841664 \\
12575	0.0956314540833716 \\
15805	0.0956285312438773 \\
19681	0.0956333041475547 \\
24203	0.095631564792386 \\
34194	0.09575546497984 \\
65017	0.0337359052271384 \\
75676	0.0337356066781103 \\
89565	0.0337358354271369 \\
120808	0.0319460951714406 \\
310815	0.00631064524622363 \\
341500	0.00631063227621097 \\
381875	0.00631062227293822 \\
518224	0.00628877256365695 \\
586377	0.00628876716862084 \\
%1728003	0.000110305983116096 \\
%1844283	0.000110309457731966 \\
%2051545	0.000110292447635539 \\
%2749104	0.000123275962536208 \\
%5364027	3.53385233143368e-05 \\
%5692841	3.53382607277465e-05 \\
%6121462	3.53386995906259e-05 \\
%10370031	2.5868786993311e-05 \\
%22213050	4.26058714452735e-06 \\
%23150396	4.26058565048818e-06 \\
%24365199	4.26058435647274e-06 \\
%45834068	2.89183501262686e-06 \\
%47879627	2.89182272029317e-06 \\
%105111606	8.55666811342147e-07 \\
};

\addplot [very thick, black, dash pattern=on 4pt off 3pt]
table [row sep=\\]{%rate 2/3
14535 0.688645822328153 \\
10558360  0.00852200198\\
};

\addplot [very thick, black, dash pattern=on 1pt off 3pt on 3pt off 3pt]
table [row sep=\\]{%rate 1
969	0.450840726049586\\
120808  0.003616189851\\
};

\end{axis}

\end{tikzpicture}}      
      \caption{Interpolation error. The theoretical asymptotic
        convergence rates are $0.4-\epsilon$ for $\rho=2$ and
        $\frac{2}{3}$ for $\rho=3$.}\label{fig:interp}
    \end{center}
  \end{figure}

  \begin{figure}[ht!]
    \begin{center}
      \subfloat[$\rho=2$]{\begin{tikzpicture}[scale=0.8]

%\definecolor{color1}{rgb}{1,0.498039215686275,0.0549019607843137}
%\definecolor{color0}{rgb}{0.12156862745098,0.466666666666667,0.705882352941177}
%\definecolor{color6}{rgb}{0.149019607843137,0.537254901960784,0.894117647058824}
%\definecolor{color7}{rgb}{0.880392156862745,0.003921568627451,0.041176470588235}
%\definecolor{color8}{rgb}{0.549019607843137,0.537254901960784,0.094117647058824}
%\definecolor{color9}{rgb}{1,1,1}
\definecolor{color1}{rgb}{1,0.498039215686275,0.0549019607843137}
\definecolor{color0}{rgb}{0.12156862745098,0.466666666666667,0.705882352941177}
\definecolor{color6}{rgb}{0.149019607843137,0.537254901960784,0.894117647058824}
\definecolor{color7}{rgb}{0.880392156862745,0.003921568627451,0.041176470588235}
\definecolor{color8}{rgb}{0.549019607843137,0.537254901960784,0.094117647058824}
\definecolor{color9}{rgb}{0.003921568627451,0.880392156862745,0.09141176470588235}

\begin{axis}[
title={$\rho=2$},
xlabel={Work},
ylabel={Quadrature Error},
% ylabel={$|\int_{[-1,1]^{50}} G({\wt
%     \bE}_{T(\bsy),h}\dd\mu_{50}(\bsy)-{\rm Quad}(G({\wt
%     \bE}_{T(\cdot),h}))|$}, 
% ylabel={$|\int_U G(\hat{\bf E}_{h}({\bf y})d\mu({\bf y})-Q_{\Lambda}G(\hat{\bf E}_{h})|$},
xmin=170.132826152243, xmax=15269368.650583,
xmode=log,
xtick={10,100,1000,10000,100000,1000000,10000000,100000000,1000000000,10000000000},
xticklabels={${10^{1}}$,${10^{2}}$,${10^{3}}$,${10^{4}}$,${10^{5}}$,${10^{6}}$,${10^{7}}$,${10^{8}}$,${10^{9}}$,${10^{10}}$},
y grid style={lightgray!92.02614379084967!black},
ymajorgrids,
ymin=2.51149449901867e-3, ymax=1.2868471525574,
ymode=log,
ytick={1e-17,1e-15,1e-13,1e-11,1e-09,1e-07,1e-05,0.001,0.01,0.1,1,10,1000,100000},
yticklabels={${10^{-17}}$,${10^{-15}}$,${10^{-13}}$,${10^{-11}}$,${10^{-9}}$,${10^{-7}}$,${10^{-5}}$,${10^{-3}}$,${10^{-2}}$,${10^{-1}}$,${10^{0}}$,${10^{1}}$,${10^{3}}$,${10^{5}}$},
tick align=inside,%\ra{modified from "outside" on rev1}
tick pos=left,
xmajorgrids,
x grid style={lightgray!92.02614379084967!black},
ymajorgrids,
y grid style={lightgray!92.02614379084967!black},
legend style={draw=white!80.0!black},
% legend style={at={(-0.05,-0.4)},anchor=west},%\ra{added on rev1}
legend cell align={left},
legend entries={
  {$\Theta=0.25$, SG},
  {$\Theta=0.05$, SG},
  {$\Theta=0.25$, MC},
  {$\Theta=0.05$, MC},
  {$-1$},{$-0.5$}
  },
legend columns = 2
%transpose legend
]
\addplot [very thick, color0, mark=*, mark size=2.3, mark options={solid}]
table [row sep=\\]{%SMK,0.25
323	0.402312505892222 \\
969	0.45542544131929 \\
1292	0.45542544131929 \\
2261	0.456117204029827 \\
3230	0.45577686706695 \\
5814	0.455533870057241 \\
6460	0.455476192796935 \\
12575	0.0985721910825851 \\
13544	0.0985656037404836 \\
18712	0.0985702751131878 \\
21296	0.0985695395432991 \\
34495	0.0899961099486953 \\
38694	0.0899966493286837 \\
59300	0.0907550394982991 \\
67052	0.09075526353319 \\
115897	0.0287981032793189 \\
128494	0.028798191787069 \\
200215	0.0288782344944705 \\
223148	0.0288782473069499 \\
506244	0.00492732525853758 \\
544035	0.00492739328392738 \\
909063	0.00554934870557569 \\
973017	0.00554936692094848 \\
%2614265	0.000377240102470627 \\
%2717625	0.000377233123481659 \\
%4693636	0.000256887449606677 \\
%4859335	0.000256883932453287 \\
%10059369	0.000112105146453338 \\
%10581192	0.000112104664949211 \\
%20890398	6.43509278679018e-05 \\
%21703311	6.43504337124993e-05 \\
%41183286	6.68206448516576e-06 \\
%44119607	6.68199435993049e-06 \\
%84513665	4.61891682724867e-06 \\
%89526910	4.61891995445091e-06 \\
};

\addplot [very thick, color1, mark=square*, mark size=2.3, mark options={solid}]
table [row sep=\\]{%SMK 0.05
323	0.448685483162927 \\
969	0.450903883468434 \\
1292	0.450903883468434 \\
1938	0.450901421868823 \\
5469	0.0941257015467412 \\
6115	0.0941242300900871 \\
6761	0.0941239477099704 \\
20208	0.0320773421109409 \\
20854	0.0320772512294424 \\
22146	0.0320771978692138 \\
22792	0.0320771780525034 \\
134746	0.00613006834897145 \\
136684	0.00613007379388888 \\
138299	0.00613007360772831 \\
138945	0.00613007253896311 \\
141206	0.00613007313102616 \\
%797628	2.44167681551041e-05 \\
%800858	2.44166491141656e-05 \\
%804734	2.44164306762283e-05 \\
%806995	2.44164381482472e-05 \\
%1249504	6.07578465407493e-06 \\
%1255318	6.07573303920006e-06 \\
%1260163	6.07569070526711e-06 \\
%1266623	6.07569100350203e-06 \\
%3247154	9.86105968541613e-07 \\
%3254260	9.86102331298545e-07 \\
%3264919	9.86100919608979e-07 \\
%3277193	9.86101588223902e-07 \\
%3289467	9.86101849779948e-07 \\
%6514488	1.99504081392516e-07 \\
%6533868	1.99503886655116e-07 \\
%6554217	1.99503709737676e-07 \\
%6575858	1.99503651453072e-07 \\
%13492456	8.56277245317478e-09 \\
%13524433	8.56278460715642e-09 \\
%13562870	8.5627825232171e-09 \\
%13609705	8.56278544983439e-09 \\
%33696596	4.44025740980494e-10 \\
%33825591	4.44019703255104e-10 \\
%33893098	4.44041824129574e-10 \\
%33969649	4.44062971545634e-10 \\
%34056859	4.44085499772951e-10 \\
%77487688	6.34146494129582e-14 \\
%77814081	3.45834650755028e-14 \\
};

\addplot [very thick, color9, mark=pentagon*, mark size=3.0, mark options={solid}]
table [row sep=\\]{%MC 0.25
646.0 0.42585759247970845\\
18646.0 0.086187843198320499\\
256054.0 0.037512323709095192\\
4909902.0 0.0054660023434726576\\
};

\addplot [very thick, color7, mark=oplus, mark size=3.0, mark options={solid}]
table [row sep=\\]{%MC 0.05
646.0 0.4497529750525896\\
18646.0 0.093668281577365703\\
256054.0 0.032021520072653221\\
4909902.0 0.0061819228902711409\\
};

\addplot [very thick, black, dash pattern=on 1pt off 3pt on 3pt off 3pt]
table [row sep=\\]{%rate 1
323	0.450840726049586\\
38694  0.003763414341\\
};

\addplot [very thick, black, dash pattern=on 4pt off 3pt]
table [row sep=\\]{%rate 0.5
1938 0.688645822328153 \\
6909902  0.01153285574\\
};

\end{axis}

\end{tikzpicture}}
      \hfill
      \subfloat[$\rho=3$]{\begin{tikzpicture}[scale=0.8]

%\definecolor{color1}{rgb}{1,0.498039215686275,0.0549019607843137}
%\definecolor{color0}{rgb}{0.12156862745098,0.466666666666667,0.705882352941177}
%\definecolor{color6}{rgb}{0.149019607843137,0.537254901960784,0.894117647058824}
%\definecolor{color7}{rgb}{0.880392156862745,0.003921568627451,0.041176470588235}
%\definecolor{color8}{rgb}{0.549019607843137,0.537254901960784,0.094117647058824}
%\definecolor{color9}{rgb}{1,1,1}
\definecolor{color1}{rgb}{1,0.498039215686275,0.0549019607843137}
\definecolor{color0}{rgb}{0.12156862745098,0.466666666666667,0.705882352941177}
\definecolor{color6}{rgb}{0.149019607843137,0.537254901960784,0.894117647058824}
\definecolor{color7}{rgb}{0.880392156862745,0.003921568627451,0.041176470588235}
\definecolor{color8}{rgb}{0.549019607843137,0.537254901960784,0.094117647058824}
\definecolor{color9}{rgb}{0.003921568627451,0.880392156862745,0.09141176470588235}

\begin{axis}[
title={$\rho=3$},
xlabel={Work},
ylabel={Quadrature Error},
% ylabel={$|\int_{[-1,1]^{50}} G({\wt
%     \bE}_{T(\bsy),h}\dd\mu_{50}(\bsy)-{\rm Quad}(G({\wt
%     \bE}_{T(\cdot),h}))|$}, 
% ylabel={$|\int_U G(\hat{\bf E}_{h}({\bf y})d\mu({\bf y})-Q_{\Lambda}G(\hat{\bf E}_{h})|$},
xmin=170.132826152243, xmax=15269368.650583,
xmode=log,
xtick={10,100,1000,10000,100000,1000000,10000000,100000000,1000000000,10000000000},
xticklabels={${10^{1}}$,${10^{2}}$,${10^{3}}$,${10^{4}}$,${10^{5}}$,${10^{6}}$,${10^{7}}$,${10^{8}}$,${10^{9}}$,${10^{10}}$},
y grid style={lightgray!92.02614379084967!black},
ymajorgrids,
ymin=2.51149449901867e-3, ymax=1.2868471525574,
ymode=log,
ytick={1e-17,1e-15,1e-13,1e-11,1e-09,1e-07,1e-05,0.001,0.01,0.1,1,10,1000,100000},
yticklabels={${10^{-17}}$,${10^{-15}}$,${10^{-13}}$,${10^{-11}}$,${10^{-9}}$,${10^{-7}}$,${10^{-5}}$,${10^{-3}}$,${10^{-2}}$,${10^{-1}}$,${10^{0}}$,${10^{1}}$,${10^{3}}$,${10^{5}}$},
tick align=inside,%\ra{modified from "outside" on rev1}
tick pos=left,
xmajorgrids,
x grid style={lightgray!92.02614379084967!black},
ymajorgrids,
y grid style={lightgray!92.02614379084967!black},
legend style={draw=white!80.0!black},
% legend style={at={(-0.05,-0.4)},anchor=west},%\ra{added on rev1}
legend cell align={left},
legend entries={
  {$\Theta=0.25$, SG},
  {$\Theta=0.05$, SG},
  {$\Theta=0.25$, MC},
  {$\Theta=0.05$, MC},
  {$-1$},{$-0.5$}
  },
legend columns = 2
%transpose legend
]
\addplot [very thick, color0, mark=*, mark size=2.3, mark options={solid}]
table [row sep=\\]{%SMK,0.25
323	0.403244067824014 \\
969	0.456358870260461 \\
1292	0.456358870260461 \\
1615	0.457297688610492 \\
2584	0.457236089707229 \\
2907	0.457236490590579 \\
3553	0.457198588117167 \\
8376	0.10023143301944 \\
8699	0.10023143301944 \\
10314	0.100223549637679 \\
12252	0.100220251145777 \\
19637	0.0916326778424319 \\
21575	0.09163220208324 \\
28336	0.091631906010247 \\
30597	0.0916317943145863 \\
58491	0.0296029715657371 \\
63982	0.0296029410935194 \\
72358	0.0296029239703631 \\
75588	0.0296029368934816 \\
264162	0.00571923040687097 \\
273852	0.00571923341686841 \\
313330	0.00571996172049556 \\
324958	0.00571996285008823 \\
%1396123	0.000183650966803092 \\
%1410658	0.000183648208794773 \\
%1652511	0.000165718308273626 \\
%1678674	0.000165717979848015 \\
%%3743962	5.31686406425617e-05 \\
%%3774324	5.31685881937075e-05 \\
%%5208144	4.71083873073011e-05 \\
%%5252072	4.71083683059114e-05 \\
%%9023058	2.49401576093786e-06 \\
%%9087658	2.49401856851982e-06 \\
%%12074421	1.79529302666228e-06 \\
%%12151295	1.79529290984298e-06 \\
%%22909341	2.85725972316706e-07 \\
%%23027882	2.85725759402708e-07 \\
%%32573171	1.27017822610575e-07 \\
%32728211	1.27018745405802e-07 \\
%56453836	1.05292882846587e-13 \\
};

\addplot [very thick, color1, mark=square*, mark size=2.3, mark options={solid}]
table [row sep=\\]{%SMK 0.05
323	0.448685483162927 \\
969	0.450903883468434 \\
1292	0.450903883468434 \\
1938	0.450901421868823 \\
5469	0.0941257015467412 \\
6115	0.0941242300900871 \\
6761	0.0941239477099704 \\
20208	0.0320773421109409 \\
20854	0.0320772512294424 \\
22146	0.0320771978692138 \\
22792	0.0320771780525034 \\
134746	0.00613006834897145 \\
136684	0.00613007379388888 \\
138299	0.00613007360772831 \\
138945	0.00613007253896311 \\
141206	0.00613007313102616 \\
%797628	2.44167681551041e-05 \\
%800858	2.44166491141656e-05 \\
%804734	2.44164306762283e-05 \\
%806995	2.44164381482472e-05 \\
%1249504	6.07578465407493e-06 \\
%1255318	6.07573303920006e-06 \\
%1260163	6.07569070526711e-06 \\
%1266623	6.07569100350203e-06 \\
%3247154	9.86105968541613e-07 \\
%3254260	9.86102331298545e-07 \\
%3264919	9.86100919608979e-07 \\
%3277193	9.86101588223902e-07 \\
%3289467	9.86101849779948e-07 \\
%6514488	1.99504081392516e-07 \\
%6533868	1.99503886655116e-07 \\
%6554217	1.99503709737676e-07 \\
%6575858	1.99503651453072e-07 \\
%13492456	8.56277245317478e-09 \\
%13524433	8.56278460715642e-09 \\
%13562870	8.5627825232171e-09 \\
%13609705	8.56278544983439e-09 \\
%33696596	4.44025740980494e-10 \\
%33825591	4.44019703255104e-10 \\
%33893098	4.44041824129574e-10 \\
%33969649	4.44062971545634e-10 \\
%34056859	4.44085499772951e-10 \\
%77487688	6.34146494129582e-14 \\
%77814081	3.45834650755028e-14 \\
};

\addplot [very thick, color9, mark=pentagon*, mark size=3.0, mark options={solid}]
table [row sep=\\]{%MC 0.25
646.0 0.44167388872951496\\
18646.0 0.092548515994830563\\
256054.0 0.036099377202510732\\
4909902.0 0.007973014411982721\\
};

\addplot [very thick, color7, mark=oplus, mark size=3.0, mark options={solid}]
table [row sep=\\]{%MC 0.05
646.0 0.45139181163055642\\
18646.0 0.093575090620750492\\
256054.0 0.03183834691846546\\
4909902.0 0.0058976327236292354\\
};

\addplot [very thick, black, dash pattern=on 1pt off 3pt on 3pt off 3pt]
table [row sep=\\]{%rate 1
323	0.450840726049586\\
38694  0.003763414341\\
};

\addplot [very thick, black, dash pattern=on 4pt off 3pt]
table [row sep=\\]{%rate 0.5
1938 0.688645822328153 \\
6909902  0.01153285574\\
};

\end{axis}

\end{tikzpicture}}      
      \caption{Quadrature error. Theoretical asymptotic convergence
        rates are $\frac{1}{2}$ for MLMC and $\frac{2}{3}$ for ML
        Smolyak quadrature.}\label{fig:quad}
    \end{center}
  \end{figure}
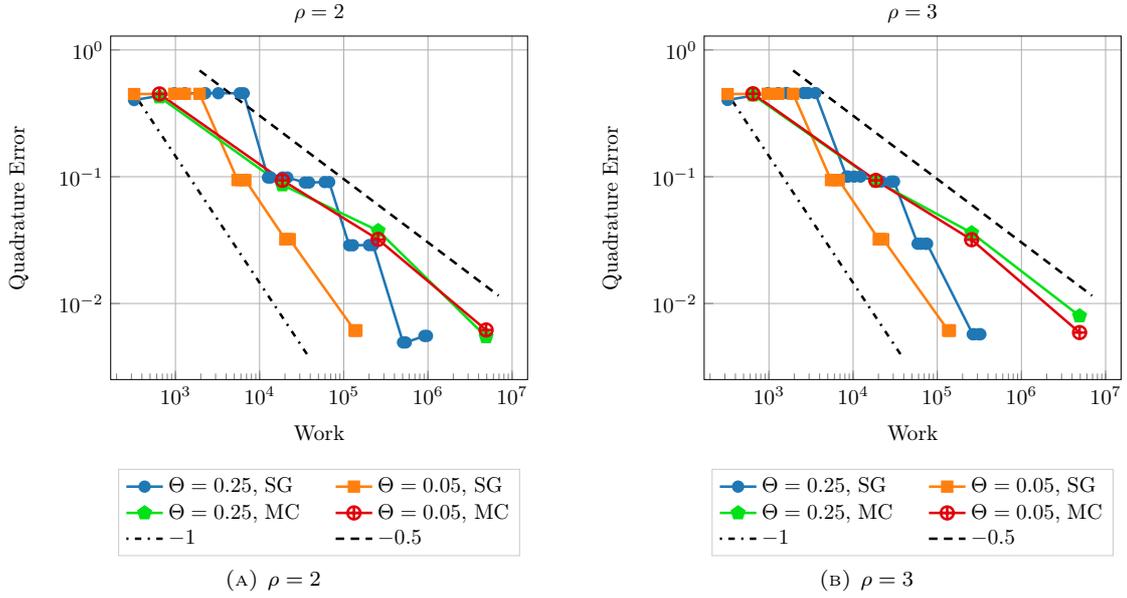
{
For $\rho=2$, we have that for $p\in (1/2,1)$ 
\begin{align}
(\norm[Z]{\bT_j})_{j\in\N_0}\in\ell^{p}.
\end{align}
However, we cannot prove a summability property of $(\norm[Z]{\bT_j})_{j\in\N_0}$ (recall $N=1$). 
Considering, however, $\rho=2+\epsilon$ for small
$\epsilon>0$ yields,
\begin{align}
\begin{gathered}
(\norm[Z]{\bT_j})_{j\in\N_0}\in\ell^{p},\quad\forall\ 1> p> \frac{1}{2+\epsilon},\\
(\norm[{Z_N}]{\bT_j})_{j\in\N_0}\in\ell^{{p_N}},\quad\forall\ 1> {p_N}> \frac{1}{1+\epsilon},
\end{gathered}
\end{align}
and the convergence rate of the multilevel Smolyak interpolation operator
is given by (\emph{cf.}~Theorem \ref{thm:MLint}),
\begin{align}
\min\left\{\frac{2}{3},\frac{\frac{2}{3}(2+\half\epsilon-1)}{\frac{2}{3}+2+\half\epsilon-1-\half\epsilon}\right\}
=\min\left\{\frac{2}{3},\frac{\frac{2}{3}(1+\half\epsilon)}{\frac{5}{3}}\right\}=\frac{2}{5}\left(1+\half\epsilon\right).
\end{align}
On the other hand, the convergence rate for the multilevel Smolyak quadrature will be given by (\emph{cf.}~Theorem \ref{thm:MWquad}),
\begin{align}
\min\left\{\frac{2}{3},\frac{\frac{2}{3}(4+\epsilon-1)}{\frac{2}{3}+4+\epsilon-2-\epsilon}\right\}
=\min\left\{\frac{2}{3},\frac{\frac{2}{3}(3+\epsilon)}{\frac{8}{3}}\right\}
=\min\left\{\frac{2}{3},\frac{3}{4}\left(1+\frac{1}{3}\epsilon\right)\right\}=\frac{2}{3}.
\end{align}
An analogous computation for the case $\rho=3$ yields the convergence rate
of $\kappa=\frac{2}{3}$ for both the multilevel Smolyak interpolation and
quadrature operators.
\section{Conclusions and future work}
\label{sec:Concl}
We have extended our original work \cite{AJSZ20} concerning shape UQ
for Maxwell's lossy cavity problem to multilevel versions of MC and
Smolyak quadrature and interpolation. Theoretically, regularity
results for pullback solutions on the nominal domain are required in
suitable Sobolev spaces. Algorithmically, we have then shown much
better convergence rates and computational costs of parametric
implementations of edge FE in the nominal domain. Our numerical
experiments confirm our theoretical findings and pave the way for
other EM applications or other approximation methods for the
approximation of parametric solution manifolds such as deep neural
networks, see e.g.~\cite{CSJZAAA}.

\bibliographystyle{plain}
\bibliography{main}

\begin{thebibliography}{10}

\bibitem{AlbertiRoughMaxw}
Giovanni~S. Alberti and Yves Capdeboscq.
\newblock Elliptic regularity theory applied to time harmonic anisotropic
  {M}axwell's equations with less than {L}ipschitz complex coefficients.
\newblock {\em SIAM J. Math. Anal.}, 46(1):998--1016, 2014.

\bibitem{AJSZ20}
R.~Aylwin, C.~Jerez-Hanckes, C.~Schwab, and J.~Zech.
\newblock Domain uncertainty quantification in computational electromagnetics.
\newblock {\em {SIAM}/{ASA} Journal on Uncertainty Quantification},
  8(1):301--341, jan 2020.

\bibitem{AJ_2020}
Rub{\'e}n Aylwin and Carlos Jerez-Hanckes.
\newblock {The effect of quadrature rules on finite element solutions of
  Maxwell variational problems}.
\newblock {\em Numerische Mathematik}, 147(4):903--936, 2021.

\bibitem{ABChS_2011}
Andrea Barth, Christoph Schwab, and Nathaniel Zollinger.
\newblock {Multi-level Monte Carlo Finite Element method for elliptic PDEs with
  stochastic coefficients}.
\newblock {\em Numerische Mathematik}, 119(1):123--161, 2011.

\bibitem{MR1768951}
Volker Barthelmann, Erich Novak, and Klaus Ritter.
\newblock High dimensional polynomial interpolation on sparse grids.
\newblock {\em Adv. Comput. Math.}, 12(4):273--288, 2000.
\newblock Multivariate polynomial interpolation.

\bibitem{BuCo00}
A.~Buffa, M.~Costabel, and D.~Sheen.
\newblock On traces for {${\bf H}({\bf curl},\Omega)$} in {L}ipschitz domains.
\newblock {\em J. Math. Anal. Appl.}, 276(2):845--867, 2002.

\bibitem{BuHip}
Annalisa Buffa and Ralf Hiptmair.
\newblock Galerkin boundary element methods for electromagnetic scattering.
\newblock In {\em Topics in computational wave propagation}, volume~31 of {\em
  Lect. Notes Comput. Sci. Eng.}, pages 83--124. Springer, Berlin, 2003.

\bibitem{BufHipTvPCS_NM2003}
Annalisa Buffa, Ralf Hiptmair, Tobias von Petersdorff, and Christoph Schwab.
\newblock Boundary element methods for {M}axwell transmission problems in
  {L}ipschitz domains.
\newblock {\em Numer. Math.}, 95(3):459--485, 2003.

\bibitem{CCS13_783}
Abdellah Chkifa, Albert Cohen, and Christoph Schwab.
\newblock High-dimensional adaptive sparse polynomial interpolation and
  applications to parametric {PDE}s.
\newblock {\em Journ. Found. Comp. Math.}, 14(4):601--633, 2013.

\bibitem{CH13}
Moulay~Abdellah Chkifa.
\newblock On the {L}ebesgue constant of {L}eja sequences for the complex unit
  disk and of their real projection.
\newblock {\em J. Approx. Theory}, 166:176--200, 2013.

\bibitem{PCPAR_1972_b}
Philippe~G Ciarlet and P-A Raviart.
\newblock The combined effect of curved boundaries and numerical integration in
  isoparametric finite element methods.
\newblock In A.K. Aziz, editor, {\em The Mathematical Foundations of the Finite
  Element Method with Applications to Partial Differential Equations}, pages
  409 -- 474. Academic Press, 1972.

\bibitem{PCPAR_1972}
Philippe~G Ciarlet and P-A Raviart.
\newblock Interpolation theory over curved elements, with applications to
  finite element methods.
\newblock {\em Comput. Methods Appl. Mech. Engrg.}, 1(2):217--249, 1972.

\bibitem{Ciarlet:2002aa}
Phillipe~G. Ciarlet.
\newblock {\em The finite element method for elliptic problems}.
\newblock Society for Industrial and Applied Mathematics, 2002.

\bibitem{KCMG_2011}
K.~A. Cliffe, M.~B. Giles, R.~Scheichl, and A.~L. Teckentrup.
\newblock Multilevel monte carlo methods and applications to elliptic pdes with
  random coefficients.
\newblock {\em Computing and Visualization in Science}, 14(1):3, 2011.

\bibitem{Schwab:2016aa}
Albert Cohen, Christoph Schwab, and Jakob Zech.
\newblock Shape holomorphy of the stationary {N}avier-{S}tokes equations.
\newblock {\em SIAM J. Math. Anal.}, 50(2):1720--1752, 2018.

\bibitem{prato_zabczyk_2014}
Giuseppe Da~Prato and Jerzy Zabczyk.
\newblock {\em Stochastic Equations in Infinite Dimensions}.
\newblock Encyclopedia of Mathematics and its Applications. Cambridge
  University Press, 2 edition, 2014.

\bibitem{DelfZol_2ndEd_2011}
M.~C. Delfour and J.-P. Zol{\'e}sio.
\newblock {\em Shapes and geometries}, volume~22 of {\em Advances in Design and
  Control}.
\newblock Society for Industrial and Applied Mathematics (SIAM), Philadelphia,
  PA, second edition, 2011.
\newblock Metrics, analysis, differential calculus, and optimization.

\bibitem{Diestel_1977}
J.~Diestel and J.~Uhl.
\newblock {\em Vector Measures}.
\newblock American Mathematical Society, jun 1977.

\bibitem{getdp}
P.~Dular and C.~Geuzaine.
\newblock {GetDP} reference manual: the documentation for {GetDP}, a general
  environment for the treatment of discrete problems.
\newblock \url{http://getdp.info}.

\bibitem{ern2004theory}
Alexandre Ern and Jean-Luc Guermond.
\newblock {\em Theory and practice of finite elements}, volume 159.
\newblock Springer Science \& Business Media, 2004.

\bibitem{Ern:2018aa}
Alexandre Ern and Jean-Luc Guermond.
\newblock Analysis of the edge finite element approximation of the {M}axwell
  equations with low regularity solutions.
\newblock {\em Computers \& Mathematics with Applications}, 75(3):918--932,
  2018.

\bibitem{gantner2017computational}
Robert~Nicholas Gantner.
\newblock {\em Computational Higher-Order Quasi-Monte Carlo for Random Partial
  Differential Equations}.
\newblock PhD thesis, ETH Zurich, 2017.

\bibitem{geuzaine2009gmsh}
Christophe Geuzaine and Jean-Fran{\c c}ois Remacle.
\newblock Gmsh: A 3-d finite element mesh generator with built-in pre-and
  post-processing facilities.
\newblock {\em International journal for numerical methods in engineering},
  79(11):1309--1331, 2009.

\bibitem{HSSS2018}
R.~Hiptmair, L.~Scarabosio, C.~Schillings, and Ch. Schwab.
\newblock Large deformation shape uncertainty quantification in acoustic
  scattering.
\newblock {\em Adv. Comput. Math.}, 44(5):1475--1518, 2018.

\bibitem{hytonen2016analysis}
Tuomas Hyt\"{o}nen, Jan van Neerven, Mark Veraar, and Lutz Weis.
\newblock {\em Analysis in {B}anach spaces. {V}ol. {I}. {M}artingales and
  {L}ittlewood-{P}aley theory}, volume~63 of {\em Ergebnisse der Mathematik und
  ihrer Grenzgebiete. 3. Folge. A Series of Modern Surveys in Mathematics
  [Results in Mathematics and Related Areas. 3rd Series. A Series of Modern
  Surveys in Mathematics]}.
\newblock Springer, Cham, 2016.

\bibitem{Jerez-Hanckes:2017aa}
Carlos Jerez-Hanckes, Christoph Schwab, and Jakob Zech.
\newblock Electromagnetic wave scattering by random surfaces: Shape holomorphy.
\newblock {\em Mathematical Models and Methods in Applied Sciences},
  27(12):2229--2259, 2017.

\bibitem{klimke}
Andreas Klimke.
\newblock {\em Uncertainty Modeling using Fuzzy Arithmetic and Sparse Grids}.
\newblock PhD thesis, Universit\"at Stuttgart, 2005.

\bibitem{MR0233555}
Rolf Leis.
\newblock Zur {T}heorie elektromagnetischer {S}chwingungen in anisotropen
  inhomogenen {M}edien.
\newblock {\em Math. Z.}, 106:213--224, 1968.

\bibitem{McLean2000}
William McLean.
\newblock {\em Strongly elliptic systems and boundary integral equations}.
\newblock Cambridge University Press, Cambridge, 2000.

\bibitem{Monk:2003aa}
Peter Monk.
\newblock {\em Finite element methods for {M}axwell's equations}.
\newblock Oxford University Press, 2003.

\bibitem{royden2010real}
HL~Royden and Patrick Fitzpatrick.
\newblock Real analysis (4th edtion).
\newblock {\em New Jersey: Printice-Hall Inc}, 2010.

\bibitem{SauterSchwabBEM}
Stefan~A. Sauter and Christoph Schwab.
\newblock {\em Boundary element methods}, volume~39 of {\em Springer Series in
  Computational Mathematics}.
\newblock Springer-Verlag, Berlin, 2011.
\newblock Translated and expanded from the 2004 German original.

\bibitem{CSJZAAA}
Christoph Schwab and Jakob Zech.
\newblock Deep learning in high dimension: Neural network expression rates for
  generalized polynomial chaos expansions in {UQ}.
\newblock {\em Analysis and Applications, Singapore}, 17(1):19--55, 2019.

\bibitem{MR564653}
Angus~Ellis Taylor and David~C. Lay.
\newblock {\em Introduction to functional analysis}.
\newblock John Wiley \&\ Sons, New York-Chichester-Brisbane, second edition,
  1980.

\bibitem{MR657071}
C.~Weber.
\newblock Regularity theorems for {M}axwell's equations.
\newblock {\em Math. Methods Appl. Sci.}, 3(4):523--536, 1981.

\bibitem{JZdiss}
Jakob Zech.
\newblock {\em {S}parse-{G}rid {A}pproximation of {H}igh-{D}imensional
  {P}arametric {PDE}s}.
\newblock Dissertation, ETH Z\"urich, 2018.

\bibitem{ZDS18}
Jakob Zech, Dinh D\~{u}ng, and Christoph Schwab.
\newblock Multilevel approximation of parametric and stochastic {PDE}s.
\newblock {\em Math. Models Methods Appl. Sci.}, 29(9):1753--1817, 2019.

\end{thebibliography}
\end{document}